\newtheorem{theorem}{Theorem}[section]
\newtheorem{proposition}[theorem]{Proposition}
\newtheorem{conjecture}[theorem]{Conjecture}
\newtheorem{lemma}[theorem]{Lemma}
\newtheorem{corollary}[theorem]{Corollary}
\theoremstyle{remark}
\newtheorem{remark}[theorem]{\bf Remark}
\newtheorem{example}[theorem]{\bf Example}
\theoremstyle{definition}
\def\P{{\bf P}}
\def\C{{\bf C}}
\def\O{{\mathcal O}}
\def\cO{{\mathcal O}}
\def\cI{{\mathcal I}}
\def\cE{{\mathcal E}}
\def\R{{\bf R}}
\def\Hilb{{\rm Hilb}}
\def\length{\mathop{\rm length}}
\def\codim{\mathop{\rm codim}}
\def\Hom{\mathop{\rm Hom}}
\def\Hom{{\rm Hom}}
\def\Spec{\mathop{\rm Spec}}
\def\coker{\mathop{\rm coker}}
\def\red{{\rm red}}
\def\PP{{\P}}
\def\gm{{\mathfrak m}}
\def\reg{{\rm reg}}
\def\Der{{\rm Der}}
\begin{document}

\title{Fibers of Generic Projections}
\author{Roya Beheshti and David Eisenbud
}

\begin{abstract}
Let $X$ be a smooth projective variety of dimension $n$ 
in $\P^r$, and let $\pi: X\to \P^{n+c}$
be a
general linear projection ,
with $c>0$. In this paper we bound the
scheme-theoretic complexity of the fibers of $\pi$.

In his famous work on stable mappings, John
Mather extended the classical results by showing
that the number of distinct points in the fiber is bounded
by $B:=n/c+1$, and that, when $n$ is not too large,
the degree of the fiber (taking the
scheme structure into account) is also bounded by $B$. A result of Lazarsfeld shows that this 
fails dramatically for $n\gg 0$. We describe a new invariant of the scheme-theoretic
fiber that agrees
with the degree in many cases and is always bounded by $B$.
We deduce, for example, that if we write a fiber 
as the disjoint union of schemes
$Y'$ and $Y''$ such that $Y'$ is the union of the locally complete intersection
components of $Y$, then
$\deg\ Y'+\deg\ Y''_\red\leq B$.
  
Our method also gives a sharp bound on the subvariety of $\P^r$ 
swept out by the $l$-secant lines of $X$ for any positive integer 
$l$, and we discuss a corresponding bound for highly secant
linear spaces of higher dimension. These results extend Ziv Ran's
``Dimension+2 Secant Lemma'' \cite{ran}.
\end{abstract}

\maketitle\nopagebreak
\vspace{0.5cm}

\section{Introduction}
\footnotetext[1]{Both
authors are grateful to MSRI, where most of the work of this paper
was done while the first author was a postdoctoral fellow. The second
author was partially supported by the NSF grant DMS 0701580.}

Throughout this paper, we work over an algebraically closed field $k$ of characteristic zero.
We denote by $X \subset  \PP^r= \PP^r_{k}$ a smooth projective variety, and by
$\pi:X\to \PP^{n+c}$ a general linear projection with $c>0$.

We are interested
in how large and complex the fibers $\pi^{-1}(q)$ can be for $q\in \PP^{n+c}$. 
Classical computations, greatly extended by work of John Mather, show that
when $n$ is small (for example $c=1$ and $n\leq 14$) 
the degree of any fiber is bounded by $n/c+1$. For $n$ large, however,
Lazarsfeld has shown that the fibers can have exponentially
greater degree (for example with $c=1$ and $n= 56$ there will sometimes be fibers of
degree $\geq 70$.) In fact, when $n$ is large compared to $c$  we know 
no bound on the degrees of the fibers
that depends on $n$ and $c$ alone. Nevertheless, the degree of a fiber is
bounded by $n/c+1$ in so many cases that it is tempting to repair the situation by
looking for ways to replace the degree by some other locally defined invariant, one that
``often'' agrees with the degree and always takes values $\leq n/c+1$. In this 
paper we introduce an invariant that provides just such a replacement.

Any fiber of $\pi:X\to \PP^{n+c}$ can be expressed as the scheme-theoretic
intersection of $X$
with a linear space $\Lambda$. Our invariant is defined more generally
for the intersection of two schemes $X,Y$ in an ambient scheme $P$. 

We first define a coherent sheaf $Q(X,Y)$, 
supported on $Z:=X\cap Y$, as the cokernel of the restriction map
$$
Q(X,Y):= \coker(\Hom(\cI_{Z/X}/\cI_{Z/X}^2, \cO_Z) \to \Hom(\cI_{Y/P}/\cI_{Y/P}^2, \cO_Z)).
$$
When the intersection is ``too small'' in the sense that $\codim Y - \dim X > \dim Z$, then we define

$$ 
q(X,Y) := \frac{ \deg Q(X,Y)}{\codim Y - \dim X -\dim Z}.
$$
The invariant $q(X,Y)$ is often equal to the degree of $Z$. We will show that this is the case,
for example, when  $X\cap Y$ is locally a complete intersection, or, more generally, is a smooth point of
the ``smoothing component'' of its Hilbert scheme in $Y$
or $X$. (Curiously, though much attention has been paid to computing intersection numbers in the
case of excess intersection, where $\codim Y - \dim X < \dim Z$, we are not aware of nontrivial invariants
other than $\deg Z$ for the case $\codim Y - \dim X > \dim Z$.)

Under good circumstances $Q(X,Y)$
is the module of obstructions to an infinitesimal flat deformation of $Y$ inducing a flat deformation
of $X\cap Y$.
In general, it measures the 
``excess intersection'' of $X$ and $Y$: when these are both
locally complete intersections, $Q(X,Y)$ vanishes if and only if the
intersection is dimensionally transverse. If $X,Y$ and $P$ are smooth and $Z=X\cap Y$ is finite, then 
$Q(X,Y)$ is equal to $Q(Y,X)$, and up to a direct sum with
a free $\cO_Z$ module, these depend only on $Z$ 
(see Theorem \ref{di} and Proposition \ref{independence}). From this one can show that
$q(X,Y)$ depends only on $Z$ and the number $\codim Y - \dim X -\dim Z$.

We now return to the case of a generic projection $\pi:X\to\PP^{n+c}$.
A fiber $Z$ can be written as $X\cap \Lambda$, where
$$
\codim \Lambda -\dim X = c > 0 = \dim Z,
$$
so $q(X,\Lambda)$ is defined and, as we have noted, it is equal to the degree of $Z$
in many cases.
Our main result says that $q(X,\Lambda)$
behaves as we would hope:

\begin{theorem}\label{main1}
If $X$ is a smooth projective variety of dimension $n$ in $\P^r$, 
and if $\pi: X \to \P^{n+c}$ is a general projection, then 
every fiber $X\cap \Lambda$, where $\Lambda$ is a linear
subspace containing the projection center in codimension 1, satisfies;

$$
q(X,\Lambda)\leq \frac{n}{c}+1.
$$
\end{theorem}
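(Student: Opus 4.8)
The statement is equivalent to the bound $\deg Q(X,\Lambda)\le n+c$, since $\codim\Lambda-\dim X-\dim Z=c$ for $Z:=X\cap\Lambda$ (which is finite for a general projection) and $q(X,\Lambda)=\deg Q(X,\Lambda)/c$. The only estimate on $\deg Q$ visible from the definition is $\deg Q\le (n+c)\deg Z$, coming from the surjection $N_{\Lambda/\PP^r}|_Z\twoheadrightarrow Q(X,\Lambda)$, and by Lazarsfeld's examples $\deg Z$ is not bounded in terms of $n$ and $c$; so this estimate is useless and the genericity of $\pi$ must be used essentially. By Proposition \ref{independence} the number $\deg Q(X,\Lambda)$ depends only on the abstract scheme $Z$ together with $c$, so it makes sense to impose the condition ``$\deg Q\ge n+c+1$'' on a family of finite subschemes of $X$. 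The plan is to run a dimension count in the space of projections, in the spirit of Mather's bound on the number of points of a fiber, but tracking the scheme structure through $Q$.

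Parametrize general projections by base-point-free $(n+c+1)$-dimensional subspaces $V\subset H^0(\cO_X(1))$, equivalently by centers $L$ in the Grassmannian $\Gr(r-n-c-1,r)$; the fibers of $\pi_V$ are the schemes $X\cap\Lambda_W$, where $W$ runs over the hyperplanes of $V$ and $\Lambda_W$ is the zero locus of $W$. For a finite subscheme $\eta\subset X$ with $\dim\langle\eta\rangle=d$ one has $\eta\subset\Lambda_W$ for some hyperplane $W\subset V$ exactly when $\dim\bigl(V\cap H^0(\cI_{\eta/\PP^r}(1))\bigr)\ge n+c$; since $\dim H^0(\cI_{\eta/\PP^r}(1))=r-d$, this is a Schubert condition on $V$ of codimension $(n+c)\,d$. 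Form the incidence variety $\mathcal W$ of pairs $(V,\eta)$ with $\eta$ a fiber of $\pi_V$ satisfying $\deg Q(X,\Lambda_W)\ge n+c+1$, and project to the space of $V$'s. Then the theorem is equivalent to the statement that $\mathcal W$ does not dominate the space of projections, and this follows if for every family of finite subschemes $\eta\subset X$ that arise as fibers of general projections with $\deg Q\ge n+c+1$, the dimension of the family is strictly less than $(n+c)\,d$, where $d$ is the span dimension along it.

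This inequality is the heart of the matter. For reduced $\eta=\{p_1,\dots,p_m\}$ one recovers exactly the classical numerology: the family has dimension $mn$, one has $d=m-1$, and $mn-(n+c)(m-1)=n-(m-1)c$, which is negative once $m>n/c+1$; moreover $\deg Q=mc$ here, so $q=m$ and the bound $n/c+1$ is sharp. The content is to show that non-reduced and mixed fibers cannot do better — that the amount of infinitesimal structure a general projection can produce in a fiber, measured against both its contribution to $\deg Q$ and its cost $(n+c)d$ in the parameter count, is bounded \emph{uniformly}, even though $\deg Z$ itself is unbounded. The tool is the deformation-theoretic reading of $Q$: $\deg Q$ governs the obstruction to moving $\Lambda_W$ while keeping $X\cap\Lambda_W$ flat, so its excess over the expected value $c\,\deg Z$ localizes along jet-data inside the fiber, whose Hilbert schemes one estimates by a Bertini-type general-position argument over the space of centers. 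This is precisely a scheme-theoretic strengthening of Ran's ``$(\dim{+}2)$-secant lemma'': just as that lemma controls reduced secant configurations, one must here control infinitesimal secant configurations — spans of jets meeting $X$ to high order — and, separately, rule out the degenerate case in which $\langle\eta\rangle$ contains a positive-dimensional linear subspace of $X$, which would force infinite fibers and hence cannot occur for a general (finite) projection. The main obstacle is exactly this uniform scheme-theoretic estimate; granting it, the dimension count closes and gives $\deg Q(X,\Lambda)\le n+c$, i.e. $q(X,\Lambda)\le n/c+1$.
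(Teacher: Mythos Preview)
Your proposal sketches a dimension-count strategy in the spirit of Mather and Ran, but it is not a proof: the central estimate is never established. You reduce the theorem to the claim that every family of finite subschemes $\eta\subset X$ with $\deg Q\ge n+c+1$ has dimension strictly less than $(n+c)d$ (with $d=\dim\langle\eta\rangle$), verify this when $\eta$ is reduced, and then in the last paragraph defer the non-reduced case with phrases like ``one estimates by a Bertini-type general-position argument'' and ``granting it, the dimension count closes.'' That deferred estimate \emph{is} the theorem. Concretely, if one tries to bound $\dim\mathcal F$ by the tangent space to the Hilbert scheme and feeds in the formula of Theorem~\ref{deformation connection}, the inequality $\dim\mathcal F<(n+c)d$ follows only when $\deg Z\le d+1$, essentially the linearly-general-position case; for the genuinely fat fibers that occur in Lazarsfeld's examples your argument has no traction. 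There is also a gap in the setup: you compute $\deg Q$ for a chosen subscheme $\eta$ and then impose the Schubert condition that $\eta$ be \emph{contained} in a fiber, but the quantity to be bounded is $\deg Q$ for the \emph{full} fiber $X\cap\Lambda$, which may strictly contain $\eta$, and $\deg Q$ is not monotone under inclusion.

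The paper's proof avoids any stratification of the Hilbert scheme. For a general center $\Sigma$, choose $\Lambda\supset\Sigma$ \emph{maximizing} $\deg(X\cap\Lambda)$, and set $F:=\ker\bigl(N_{\Lambda/\PP^r}\twoheadrightarrow Q(X,\Lambda)\bigr)$; its sections are exactly the first-order deformations of $\Lambda$ keeping $X\cap\Lambda$ flat. Because $\Sigma$ is general and $\Lambda$ is degree-maximal, semicontinuity forces every first-order deformation of $\Sigma$ to lift to one of $\Lambda$ preserving the fiber length, so the restriction $H^0(F)\to H^0(N_\Lambda|_\Sigma)$ is surjective. A short diagram chase then shows that $H^0(N_\Lambda(-1))$ and $H^0(N_\Lambda)$ have the same image in $Q$, and Proposition~\ref{dim0} upgrades this to surjectivity of $H^0(N_\Lambda(-1))\to Q$, whence $\length Q\le h^0(N_\Lambda(-1))=n+c$. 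The genericity of the projection is used exactly once, through the maximality-plus-semicontinuity step, not through a case analysis of possible fiber schemes; this is what makes the argument go through uniformly, independent of the (unbounded) degree of $Z$.
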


The invariant $q(X,\Lambda)$ can be greater than or less than the degree of the
scheme $X\cap \Lambda$. Locally
at a point of $X\cap \Lambda$ that is not a complete intersection,
we can show in the situation above
that $q(X,\Lambda)$ is at least $\frac{2}{c}+1$ (see also Conjecture \ref{Q-reg}.)

A corollary of the work of John Mather \cite{mat1} is that a fiber of $\pi$ as in 
Theorem \ref{main1} can never have more than $n/c+1$ distinct points. 
Mather's work, and also that of Ziv Ran \cite{ran} implies that if a fiber is
curvilinear, then it has degree at most $n/c+1$. Both these statements
are improved by the following consequence of Theorems \ref{main1}
and \ref{qlength}. The class of ``licci'' schemes (schemes in the 
linkage class of complete intersections), which is explained in Section 3,
contains, for example, the class of complete intersections, but also
all zero-dimensional schemes of embedding dimension at most 2.

\begin{corollary}\label{first cor}
Suppose that $X$ is a smooth $n$ dimensional
variety in $\P^r$ and that $\pi$ is a generic projection
of $X$ into $\P^{n+c}$, with $c\geq 1$. Write
$\pi^{-1}(p)=X\cap \Lambda$, where $\Lambda$ is a linear space 
containing the projection center in codimension 1. 
If we decompose $\pi^{-1}(p)$ as $Y\cup Y'$ where $Y$ is the
union of all the licci components,
then 
$$
\deg Y+(1+\frac{3}{c})\deg Y'_\red \leq q(X,\Lambda)\leq n/c+1.
$$
In case $c=1$ this may be improved to 
$$
\deg Y+5\deg Y'_\red \leq  q(X,\Lambda)\leq n+1.
$$

\end{corollary}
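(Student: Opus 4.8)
The plan is to read the upper bound straight off Theorem \ref{main1} and to prove the lower bound by localizing $Q(X,\Lambda)$ at the points of the fiber. Since $\pi$ is finite, $Z:=\pi^{-1}(p)=X\cap\Lambda$ is zero-dimensional and $\codim\Lambda-\dim X-\dim Z=(n+c)-n-0=c$, so $q(X,\Lambda)=\frac{1}{c}\length Q(X,\Lambda)$. The sheaf $Q(X,\Lambda)$ has finite length and is the direct sum of its stalks $Q(X,\Lambda)_z$, $z\in Z$; by Theorem \ref{di} and Proposition \ref{independence} each local contribution $q_z:=\frac{1}{c}\length\, Q(X,\Lambda)_z$ depends only on the local ring $\cO_{Z,z}$ together with $c$, and $q(X,\Lambda)=\sum_{z\in Z}q_z$. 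The inequality $q(X,\Lambda)\le n/c+1$ is exactly Theorem \ref{main1}, so the task is to bound each $q_z$ below; this part of the argument requires only that $X$ and $\Lambda$ be smooth, not that $\pi$ be generic.

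If $z$ lies on a licci component, then the linkage-theoretic structure of licci ideals makes $\cO_{Z,z}$ a smooth point of the smoothing component of its Hilbert scheme in $X$ (equivalently, in $\Lambda$); by the remarks preceding Theorem \ref{main1}, or by the corresponding clause of Theorem \ref{qlength}, this forces $q_z=\deg\cO_{Z,z}$. Summing over the licci points of $Z$ produces exactly $\deg Y$. (Recall that every zero-dimensional local ring of embedding dimension at most $2$ is licci, so these points are typically plentiful.)

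If $z$ lies on a non-licci component, we must show $q_z\ge 1+\frac{3}{c}$, i.e. $\length\, Q(X,\Lambda)_z\ge c+3$, improved to $\ge 5$ when $c=1$: this is Theorem \ref{qlength}, and it is the main obstacle. The approach is to study $Q(X,\Lambda)_z$ through the conormal sequence $0\to K\to (\cI_{\Lambda}/\cI_{\Lambda}^2)\otimes\cO_Z\to \cI_{Z/X}/\cI_{Z/X}^2\to 0$: applying $\Hom(-,\cO_Z)$ presents $Q(X,\Lambda)_z$ as an extension built from $\Hom(K,\cO_Z)_z$ and the connecting map into $\Ext^1(\cI_{Z/X}/\cI_{Z/X}^2,\cO_Z)_z$, and in the locally complete intersection case $K$ is locally free of rank $c$ and this $\Ext^1$ vanishes, recovering $\length\, Q_z=c\deg\cO_{Z,z}$ and $q_z=\deg\cO_{Z,z}$. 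When $\cO_{Z,z}$ is not a complete intersection, its deviation — the excess in the minimal number of generators of $\cI_{Z/X,z}$ over its codimension — forces strictly more length in $Q_z$, and when $\cO_{Z,z}$ is moreover not licci, hence of embedding dimension at least $3$, a finer accounting of the first Betti numbers or of a generic link upgrades the slack to the asserted value, the sharp form for $c=1$ coming from a short case analysis of the minimal non-licci Artinian local algebras. The reason the bound cannot be obtained cheaply is that $\length\, Q(X,\Lambda)_z$ is \emph{not} controlled by $\deg\cO_{Z,z}$ — Lazarsfeld's examples show it can be far smaller — so one genuinely needs the structural implication ``$\length\, Q(X,\Lambda)_z$ small $\Rightarrow$ $\cO_{Z,z}$ licci'' supplied by linkage theory. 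Granting the local estimates, $q(X,\Lambda)=\sum_z q_z\ge \deg Y+(1+\tfrac{3}{c})\deg Y'_{\red}$, and combining with Theorem \ref{main1} completes the proof; the case $c=1$ is identical, with $1+\tfrac{3}{c}$ replaced by $5$.
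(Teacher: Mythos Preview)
Your derivation of the corollary from Theorems \ref{main1} and \ref{qlength} is correct and is exactly the paper's intended route: localize $Q(X,\Lambda)$ over the finitely many points of the fiber, apply Theorem \ref{qlength}(1) at the licci points to get $q_z=\deg\cO_{Z,z}$, apply Theorem \ref{qlength}(2) at the non-licci points to get $q_z\ge\max(1+3/c,\,5/c)$, sum, and bound above by Theorem \ref{main1}. The paper offers no further argument for the corollary beyond naming these two theorems.

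Where your write-up drifts is in the attempted sketch of Theorem \ref{qlength}(2) itself. The phrases ``a finer accounting of the first Betti numbers or of a generic link'' and ``a short case analysis of the minimal non-licci Artinian local algebras'' do not describe the actual mechanism, and would not be easy to make precise along those lines. The paper's proof is different and quite concrete: from $\mu:=\mu(Q(X,\Lambda)_z)$ one deduces that the conormal module $J/J^2$ (with $J=\cI_{Z/X,z}$) has a free $\cO_{Z,z}$-summand of rank $r-n-\mu$; Vasconcelos's theorem on ideals generated by $R$-sequences then produces a regular sequence $f_1,\dots,f_m$ in $J$ with $m=r-n-\mu$ such that $J/(f_1,\dots,f_m)$ still has finite projective dimension. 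If $\mu\le c+2$ the quotient ring has dimension $\le 2$, and if $\mu\le 4$ the quotient ideal has $\le 4$ generators; in either case Proposition \ref{licci lemma} forces $J$ to be licci. Contraposing gives $\mu(Q)\ge\max(c+3,5)$ at every non-licci point. If you intend only to cite Theorem \ref{qlength}, your proof of the corollary is complete as written; if you intend to supply its proof, replace the vague sketch with this Vasconcelos-plus-licci-criteria argument.
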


\subsection*{Castelnuovo-Mumford Regularity}
It would be nice to have bounds on the complexity of the fiber in terms
of more familiar invariants. One attractive
possibility not contradicted by Lazarsfeld's examples is given by the following
conjecture:

\begin{conjecture}\label{fiber-reg}
Let $X\subset \PP^r$ be a smooth projective variety of dimension $n$, and
let $\pi: X\to \PP^{n+c}$ be
a general linear projection. If $Z\subset X\subset \PP^r$
is any fiber, then the Castelnuovo-Mumford regularity of
$Z$, as a subscheme of $\PP^r$ is $\leq n/c +1$.
\end{conjecture}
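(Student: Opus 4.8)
We sketch a possible approach to Conjecture~\ref{fiber-reg}, in the spirit of the generic-position and incidence-variety arguments underlying Theorem~\ref{main1} and Ran's Dimension+2 Secant Lemma. Let $Z=X\cap\Lambda$ be a fiber, $\Lambda$ a linear space of dimension $r-n-c$ containing the projection center $\Lambda_0$ in codimension $1$; thus $Z$ is finite. For a finite scheme the regularity is controlled solely by the vanishing of $H^1$, and $\reg_{\PP^r}Z=\reg_{\langle Z\rangle}Z$, the regularity being computed in the linear span. So, setting $m:=\lfloor n/c\rfloor$, it suffices to show
$$
H^1\bigl(\PP^r,\cI_{Z/\PP^r}(m)\bigr)=0,
$$
that is, that $Z$ imposes independent conditions on forms of degree $m$, equivalently that the image of $Z$ under the degree-$m$ Veronese map spans a linear space of dimension $\deg Z-1$.

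When $Z$ is licci this is immediate: $\deg Z=q(X,\Lambda)\le n/c+1$ by Theorem~\ref{main1}, and the Hilbert function of a finite scheme increases strictly until it stabilizes at the degree, so $\reg Z\le\deg Z$. More generally, by Corollary~\ref{first cor} the union $Y$ of the licci components satisfies $\deg Y\le q(X,\Lambda)\le n/c+1$, so $\reg Y\le\deg Y$, and the whole difficulty is concentrated in the non-licci part $Y'$ and its interaction with $Y$: there $\deg Y'_{\red}$ is bounded by Corollary~\ref{first cor}, but $\deg Y'$ itself can be exponentially large (Lazarsfeld). It is exactly this phenomenon that prevents Conjecture~\ref{fiber-reg} from being a formal consequence of Theorem~\ref{main1}: the invariant $q(X,\Lambda)$ may be strictly smaller than $\deg Z$, so the bound $q(X,\Lambda)\le n/c+1$ carries no information about $\reg Z$ via the estimate $\reg Z\le\deg Z$.

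The main step would be a dimension count on the incidence variety of triples $(\Lambda_0,p,f)$, with $\Lambda_0$ in an open subset of the Grassmannian of admissible centers, $p\in\PP^{n+c}$, and $f$ a form of degree $m$, subject to the requirement that $f$ witnesses a failure of the fiber $Z=X\cap\Lambda$ over $p$ to impose independent conditions in degree $m$ --- i.e.\ the hyperplane $\{f=0\}$ meets, but not properly, the span of the degree-$m$ Veronese image of $Z$. Stratifying by the abstract isomorphism type of $Z$ (its length and its local algebras, whose complexity is bounded by Mather's theory), one would estimate on each stratum (a)~the dimension of the locus of pairs $(\Lambda_0,p)$ yielding a fiber of that type, in terms of the dimensions of suitable jet spaces along $X$, and (b)~the codimension of the extra condition on $f$; the aim is to show the total is less than the dimension of the Grassmannian, so that a general $\Lambda_0$ produces only fibers imposing independent conditions in degree $m$, for every $p$ and every $f$. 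The threshold $m=n/c$ should be exactly where estimates (a) and (b) balance, as $n/c+1$ is in Theorem~\ref{main1}. Useful auxiliary facts: regularity is monotone under inclusions of finite schemes, $\reg Z\le\reg W$ whenever $Z\subseteq W$ (since $H^0(\O_W(t))\twoheadrightarrow H^0(\O_Z(t))$ for all $t$), so one may enlarge $Z$, e.g.\ to the fiber $W\supseteq Z$ of one further generic projection $X\to\PP^{n+c-1}$ --- though this route alone gives only $\reg Z\le n/(c-1)+1$; and $Z$ has at most $n/c+1$ local branches by Mather, so one might attempt to bound $\reg Z$ from the branches and their clustering inside $\Lambda$.

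The hard part is precisely (a)--(b): controlling the Hilbert function --- equivalently the successive Veronese spans --- of the genuinely wild fibers. Unlike $\deg Z$, which is governed by the single sheaf $Q(X,\Lambda)$, the regularity sees finer structure, and a fiber of a generic projection need not even be Cohen--Macaulay, so both the stratification by type and the ensuing dimension estimates are delicate. The crux is to prove that $Z=X\cap\Lambda$ is, inside $\Lambda$ (or inside its linear span), as generic as its isomorphism type permits, which would force $h^0(\O_Z(m))$ to be maximal. Establishing this appears to require essential use of the smoothness of $X$ and of the genericity of $\Lambda_0$, and at present we do not know how to carry it out beyond the licci range.
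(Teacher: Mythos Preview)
The statement you are attempting to prove is labeled a \emph{Conjecture} in the paper, and the paper does not prove it; it remains open.  The paper's only remark toward a proof is that Conjecture~\ref{fiber-reg} would follow from Theorem~\ref{main1} together with the (also open) Conjecture~\ref{Q-reg}, namely $\reg Z\le q(X,\Lambda)$.  So there is no ``paper's own proof'' to compare against, and your proposal should be read as a strategy toward an open problem rather than as a proof to be checked.

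That said, your write-up is honest about its status: you correctly isolate the licci case as a consequence of Theorem~\ref{main1} and Theorem~\ref{qlength}(1), and you explicitly flag the non-licci part as the unresolved core.  Your proposed incidence-variety/stratification approach is different in flavor from the paper's suggested route via Conjecture~\ref{Q-reg}: the paper would like to compare $\reg Z$ directly with the sheaf-theoretic invariant $q(X,\Lambda)$, whereas you propose to bound the locus of ``bad'' $(\Lambda_0,p,f)$ by a parameter count over strata of fiber types.  Neither route is currently known to work, and the obstruction is the same in both: for non-licci fibers one has no handle on the Hilbert function of $Z$ beyond what $q(X,\Lambda)$ and Mather's corank bounds provide, and those do not see the Veronese spans you need.

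Two small corrections.  First, a finite scheme over a field is automatically Cohen--Macaulay (it has dimension~$0$), so your remark that ``a fiber of a generic projection need not even be Cohen--Macaulay'' is not the right way to phrase the difficulty; the issue is rather that $Z$ need not be Gorenstein or licci, and its Hilbert function can be wild.  Second, your stratification ``by abstract isomorphism type of $Z$'' is not obviously finite or even bounded in complexity by Mather's theory: Mather bounds the number of points and the coranks, but not the full local algebra structure, and indeed Lazarsfeld's examples show the degree (hence the number of isomorphism types in play) grows at least like $2^{\sqrt n}$.  Any dimension count over such strata would have to be uniform in a way that is not addressed here.
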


If true, this conjecture is sharp: an argument of
Lazarsfeld, reproduced in Proposition \ref{lazarsfeld argument}
shows that the generic
projections to $\PP^{n+1}$ of codimension 2, arithmetically 
Cohen-Macaulay varieties of dimension $n$
have fibers with degree $n+1$ as long as the varieties don't lie on 
hypersurfaces of degree $\leq n$; since such fibers are
automatically colinear, their regularity is $n+1$. 

The conjecture is also
sharp for non-degenerate surfaces in $\PP^5$.
The Fano embedding of the Reye congruence (Example \ref{enriques}) is a nondegenerate
smooth Enriques surface in $\PP^5$ 
 whose generic projection to $\PP^3$
has fibers with three collinear points, and thus regularity 3.

Conjecture \ref{fiber-reg} would follow from Theorem \ref{main1} and 
the following conjectural comparison
between the invariant $q$ and the regularity of the intersection:

\begin{conjecture}\label{Q-reg}
Let $X\subset \PP^r$ be a smooth projective variety of dimension $n$, and
let $\pi: X\to \PP^{n+c}$ be
a general linear projection. If $Z\subset X\subset \PP^r$
is any fiber, written as
$Z=X\cap \Lambda$, where $\Lambda$ is a linear
subspace containing the projection center in codimension 1,
satisfies 
$$
\reg\ Z\leq q(X,\Lambda). 
$$
\end{conjecture}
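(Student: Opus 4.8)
We describe one approach to Conjecture \ref{Q-reg}; the step we cannot complete is a local analysis of generic projection singularities beyond the range covered by Mather's classification, which is why the statement is only conjectural.

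\smallskip

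\emph{Reductions and a structural remark.} Since $Z\subset\Lambda$ and $\Lambda$ is linearly embedded in $\PP^r$, restriction of forms identifies $\reg\ Z$ with the Castelnuovo--Mumford regularity of $Z$ inside its own linear span, so we may assume $Z$ is nondegenerate in $\langle Z\rangle$. Next, the invariant $q$ becomes completely explicit here: because $Z=X\cap\Lambda$ scheme--theoretically, the ideal $\cI_{Z/X}\subset\cO_X$ is generated by the $n+c$ sections of $\cO_X(1)$ obtained by restricting linear equations of $\Lambda$; equivalently $Z$ is the zero scheme in $X$ of a section of $\cO_X(1)^{n+c}$. Hence $\cI_{Z/X}/\cI_{Z/X}^2$ is a quotient of $N^{\vee}_{\Lambda/\PP^r}\otimes\cO_Z$, and dualizing shows that the map in the definition of $Q(X,\Lambda)$ is an inclusion $N_{Z/X}\hookrightarrow N_{\Lambda/\PP^r}\otimes\cO_Z$ (here $N_{Z/X}:=\Hom(\cI_{Z/X}/\cI_{Z/X}^2,\cO_Z)$) of sheaves of lengths $\deg N_{Z/X}$ and $(n+c)\deg Z$. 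Therefore
$$
q(X,\Lambda)=\deg Z-\frac{\deg N_{Z/X}-n\deg Z}{c}.
$$
In particular $q(X,\Lambda)\ge\deg Z$ whenever $N_{Z/X}$ has the expected length $n\deg Z$; since $\reg\ Z\le\deg Z$ for any finite scheme (the Hilbert function of a zero--dimensional saturated ideal attains $\deg Z$ in degree at most $\deg Z-1$), the conjecture is automatic for such $Z$. By Theorem \ref{qlength} it holds whenever $Z$ is licci, and in general the licci components contribute their full degree to $q$, so the content is concentrated on the non--licci locus, where $\deg N_{Z/X}>n\deg Z$ and $q$ may be much smaller than $\deg Z$ --- the regime of Lazarsfeld's examples.

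\smallskip

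\emph{What one would do with what remains.} The structural remark is the lever: every germ $(Z,p)$ is cut out in the smooth $n$--fold $X$ by $n+c$ elements of $\mathfrak m_{X,p}$, an almost complete intersection whose ``excess'' is at most $c$. The plan is to bound $\reg\ Z$ by combining (i) a local bound, at each $p$, on the contribution of $(Z,p)$ to $\reg\ Z$ in terms of this $(n+c)$--generator structure, and (ii) a bound on the positional part of $\reg\ Z$ --- the failure of $Z$ to impose independent conditions because clusters lie on low--degree subvarieties --- which should follow from the sharp secant estimates of this paper (those that extend Ran's lemma \cite{ran} and already power Theorem \ref{main1}), together with the degree bound for the licci pieces. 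The local bounds then have to be matched, via the formula above, against $\tfrac1c\length Q(X,\Lambda)_p$; concretely this asks that the normal sheaf $N_{Z/X}$ not be too large relative to $\reg\ Z$. For a general projection, Mather's analysis \cite{mat1} identifies which germs $(Z,p)$ can occur when $n$ is small, and one can verify the required inequalities directly in that range.

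\smallskip

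\emph{The main obstacle.} The difficulty is exactly the local comparison in the Lazarsfeld regime. For a genuinely singular germ $(Z,p)$ one must show that $Q(X,\Lambda)_p$ --- which, in its deformation--theoretic guise, records the obstructions to deforming $\Lambda$ so as to induce a flat deformation of $Z$ --- is, up to the factor $c$, at least as large as the contribution of $(Z,p)$ to the last term of the minimal free resolution of $\cO_Z$. This needs a grip on the local algebra of generic projection singularities precisely where Mather's classification is unavailable, the same range in which the naive degree bound fails. I would first test the inequality in the smallest open cases --- $c=1$ with $n$ just beyond the Mather range, and Lazarsfeld's examples themselves --- by computing $Q(X,\Lambda)_p$ and the resolution of $\cO_{Z,p}$ from the almost--complete--intersection presentation, and then try to upgrade a uniform local estimate to the global statement using the flat family of fibers of $\pi$ inside $X$, whose obstruction module is $Q(X,\Lambda)$. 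Closing this gap --- equivalently, excluding a thick generic--projection fiber whose minimal free resolution is longer than $q(X,\Lambda)$ permits --- is all that separates Theorem \ref{main1} from Conjectures \ref{Q-reg} and \ref{fiber-reg}.
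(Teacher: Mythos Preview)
The statement is labeled a \emph{conjecture} in the paper and the paper offers no proof of it; it is presented as an open problem linking Theorem~\ref{main1} to Conjecture~\ref{fiber-reg}. So there is no ``paper's own proof'' to compare your attempt against, and you are right to frame what you wrote as an approach with an explicitly identified gap rather than a proof.

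Your structural observations are correct and in line with the paper. The formula
\[
q(X,\Lambda)=\deg Z-\frac{\deg N_{Z/X}-n\deg Z}{c}
\]
is exactly Theorem~\ref{deformation connection} rewritten (with $\codim\Lambda=n+c$ and $T_{\Hilb_X,[Z]}=N_{Z/X}$), and the injectivity you assert for the map defining $Q$ is implicit in that theorem's proof. The reduction to the non-licci locus via Theorem~\ref{qlength}, together with the elementary bound $\reg Z\le\deg Z$ for finite schemes, does dispose of the licci case. Your diagnosis of the obstacle---controlling $\deg N_{Z/X}$ (equivalently $\dim T_{\Hilb_X,[Z]}$) at generic-projection germs beyond Mather's range, precisely the Lazarsfeld regime---is where the paper also leaves the problem open.

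One terminological quibble: ``almost complete intersection'' is normally reserved for ideals generated by $\codim+1$ elements, which is your $c=1$ case only; for general $c$ the local structure is that of an ideal of codimension $n$ generated by $n+c$ elements in a regular local ring of dimension $n$, and it would be cleaner to say that directly. Otherwise the outline is a reasonable sketch of how one might attack the conjecture, with the missing step honestly flagged.
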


If Conjecture \ref{fiber-reg} is true, it could be used to give
a new bound on the regularity of smooth varieties. 
The Eisenbud-Goto conjecture \cite{eisenbud}
 predicts that for any integral non-degenerate variety $X$ in $\P^r$, the Castelnuouvo-Mumford regularity of $X$ is less than or equal to $\deg (X) - \rm{codim} (X) +1$.  This conjecture 
has been verified for curves and smooth surfaces (see \cite{GLP} and \cite{laz1}). 
The best known bound on the regularity of a smooth variety $X$
of dimension $n$ is given by Bertram-Ein-Lazarsfeld
(Cor. 2.1 in \cite{BEL}),
extending a theorem of Mumford:
$$ {\rm{reg}} (X) \leq \min{\{\codim X, 1+\dim X\}}  (\deg X -1) - 1.$$
Kwak \cite{kwak} has improved this for smooth varieties of dimension 
$\leq 6$ to a bound of the form $\deg X-\codim X+{\rm constant}$. Using projection methods of \cite{laz1} and \cite{kwak}, 
Conjecture \ref{fiber-reg} would imply that for any smooth non-degenerate projective variety of dimension 
$n$ in $\P^r$, 
$$ 
\reg(X) \leq \deg (X) - \codim (X) + 1 + c_{n,r} 
$$
where $c_{n,r} = \sum_{i=3}^{n+1} (i-2) {{r-n-2+i}\choose{i}}.$

\subsection*{Secant Lines and Planes}
We can also use $Q(X,\Lambda)$
to give bounds on the dimension of the subvariety of 
$\P^r$ swept out by $l$-secant lines of $X$. An $l$-secant line of 
$X$ is a line in $\P^r$ whose intersection with $X$ has degree at least 
$l$ or is contained in $X$. With $X$ as above, let $S_{l}$ be 
the subvariety of $\P^r$ swept out by the $l$-secant lines of $X$. 
Ziv Ran's celebrated ``Dimension+2 Secant Lemma'' \cite{ran} says that the dimension of
$S_{n+2}$ is at most $n+1$. We give a more conceptual proof,
which allows us to go further and get a sharp bound on the dimension of the 
subvariety swept out by the $l$-secant lines of $X$ for any $l$.

\begin{theorem}\label{main3}
Let $X$ be a smooth projective variety of dimension $n$ in $\P^r$, 
and let $S_{l}$ be the subvariety of $\P^r$ swept out by all the 
$l$-secant lines of $X$. If $l\geq 2$ then 
$$\dim S_{l} \leq  \frac{nl}{l-1}+1.$$
\end{theorem}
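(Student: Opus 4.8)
The plan is to reduce the statement about $l$-secant lines to a statement about fibers of a generic projection together with the excess-intersection invariant $q$. Fix $X \subset \PP^r$ smooth of dimension $n$, and suppose for contradiction that $\dim S_l > \frac{nl}{l-1}+1$, i.e. $\dim S_l \geq m+1$ where $m := \lceil \frac{nl}{l-1}\rceil + 1$ (one should be careful with the rounding; the bound is stated with a floor implicit in ``$\leq$''). Project $X$ generically to $\PP^{m}$; since $m \geq n$ with $c := m - n \geq 1$, Theorem \ref{main1} applies and says every fiber $X \cap \Lambda$ satisfies $q(X,\Lambda) \leq n/c + 1$. Choosing $m$ exactly so that $n/c+1 < l$, i.e. $c > n/(l-1)$, forces $q(X,\Lambda) < l$ for every fiber. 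The idea is then to exhibit a fiber containing an $l$-secant line's worth of intersection and derive $q \geq l$, a contradiction.

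The mechanism for the lower bound on $q$ comes from Corollary \ref{first cor} (and the results feeding it, Theorems \ref{main1} and \ref{qlength}): if a fiber $Z = X\cap\Lambda$ contains a subscheme of degree $\geq l$ supported on a line — either $l$ distinct points on a line, or a fat subscheme of a line of degree $l$, or a line component — then, because such a subscheme is curvilinear (hence licci, of embedding dimension $\leq 2$, if not a whole line), the licci part $Y$ of $Z$ has $\deg Y \geq l$ as soon as the $l$ collinear points are genuinely in the fiber, and hence $q(X,\Lambda) \geq \deg Y \geq l$. Thus the crux is to show that if $\dim S_l$ is too large, then a generic projection to $\PP^m$ must produce a fiber containing $l$ collinear points of $X$. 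This is a dimension count: the locus in $\PP^r$ of $l$-secant lines, together with the points on them, has dimension $\dim S_l$; a general linear projection $\PP^r \dashrightarrow \PP^{m}$ with center $\Lambda_0$ of dimension $r-m-1$ contracts any line meeting $\Lambda_0$, and more to the point, the preimage of a general point $q \in \PP^m$ is a linear space $\Lambda \supset \Lambda_0$ of dimension $r-m$. We want $\Lambda$ to meet $S_l$ along (or contain) an $l$-secant line; by the usual incidence-variety argument this happens for some $q$ precisely when $\dim S_l + (\text{dimension of the family of } l\text{-secant lines through a general point of } S_l) + (r - m) \geq r + (\text{something})$, which rearranges to a bound of the shape $\dim S_l \leq \frac{nl}{l-1}+1$ once one accounts for the $(l-1)$-fold redundancy in specifying an $l$-secant line by its intersection points.

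The main obstacle will be making this last dimension count precise, in particular handling the two degenerate ways an $l$-secant line can occur — a reduced configuration of $l$ points versus a non-reduced length-$l$ subscheme versus a line contained in $X$ — uniformly, and checking that the resulting fiber subscheme really is licci (or at least has the degree/multiplicity behavior needed to invoke the lower bound on $q$). A clean way to organize this is to introduce the incidence variety $I \subset X^{(l)} \times \Gr(1,r)$ of (length-$l$ subscheme of a line, that line) pairs, bound $\dim I$ in terms of $\dim S_l$, project to $\Gr(1,r)$ and then to the space of projection centers, and argue that for generic center the image configuration lands in a single fiber whenever $\dim I$ exceeds the generic expectation; then Corollary \ref{first cor} converts ``$l$ collinear points in a fiber'' into ``$q(X,\Lambda) \geq l$,'' contradicting Theorem \ref{main1} for the chosen $c$. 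Sharpness of the bound should come from the same codimension-2 arithmetically Cohen-Macaulay examples used to show sharpness of Conjecture \ref{fiber-reg}, whose generic projections realize the extremal collinear fibers.
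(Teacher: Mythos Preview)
Your strategy is precisely the one the paper contemplates in the paragraph preceding its proof: the authors note that if one knew $q(X,\Lambda)\geq\deg(X\cap\Lambda_1)$ whenever the fiber plane $\Lambda$ contains an $l$-secant line $\Lambda_1$, then Theorem~\ref{main1} would give the bound immediately---but they state that they ``do not know how to prove such a comparison theorem for $q(X,\Lambda)$.'' Your attempt to extract this comparison from Corollary~\ref{first cor} has a real gap. The curvilinear scheme $X\cap\Lambda_1$ is indeed licci, but it is only a \emph{subscheme} of the fiber $Z=X\cap\Lambda$, not a union of its components; the decomposition $Z=Y\cup Y'$ in Corollary~\ref{first cor} is by connected components of $Z$, and a component $Z_p$ containing a fat point of $X\cap\Lambda_1$ may well fail to be licci. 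In that case $Z_p$ lies in $Y'$, and the corollary credits it only with $1+3/c$ toward $q$ via the term $(1+3/c)\deg Y'_{\red}$, not with its actual degree. Concretely, if $\Lambda_1$ is tangent to $X$ to order $l$ at a single point $p$ and $Z_p$ is not licci, Corollary~\ref{first cor} yields only $q(X,\Lambda)\geq 1+3/c$, far short of $l$. Your argument does go through when $X\cap\Lambda_1$ is reduced, since then $Z$ has at least $l$ distinct components and $\deg Y+\deg Y'_{\red}\geq l$; but nothing forces the relevant family of $l$-secant lines to consist of honestly secant, rather than high-order tangent, lines.

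The paper's proof sidesteps this by never enlarging to a bigger $\Lambda$: it works directly with the secant line $\Lambda_1$ itself and the sheaf $Q(X,\Lambda_1)$. Because every finite subscheme of $\PP^1$ is a local complete intersection, Theorem~\ref{qlength}(1) computes $\length Q(X,\Lambda_1)=(r-n-1)l$ exactly. Writing the kernel $F$ of $N_{\Lambda_1/\PP^r}\to Q(X,\Lambda_1)$ as $\bigoplus_i\cO_{\PP^1}(a_i)$, one reads off $\sum a_i$, bounds each $a_i\geq -l+1$ by a regularity argument, deduces that at most $nl/(l-1)$ of the $a_i$ are nonnegative, and then bounds the swept-out dimension via an incidence correspondence (the image of $H^0(F)\to F|_p$ at a general $p$ has dimension at most the number of nonnegative $a_i$). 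Theorem~\ref{main1} is not invoked.
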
  

An argument of Lazarsfeld (Proposition \ref{lazarsfeld argument})
shows that this bound is achieved in the case
$l=n+1$ for arithmetically Cohen-Macaulay
 varieties of codimension 2.
In Theorem \ref{plane} we give a corresponding bound for secant planes, as well.

\begin{remark}[Length and Degree]
We generally use the word \emph{degree} when speaking of schemes of dimension zero,
and \emph{ length} when speaking of modules of dimension zero. 
Since we are working over an algebraically closed field, the degree $\deg Z$
of a scheme of dimension zero is the same as the \emph{length} of the module
$\cO_Z$.
\end{remark}

We are grateful to Craig Huneke, who first showed us how to prove that
$Q(X,Y)$ is nonzero in the setting of Theorem \ref{qlength}, and to Joe Harris and
Sorin Popescu, who helped us with interesting examples. Rob Lazarsfeld
has shared numerous insights. In particular he pointed out to us
the bad behavior of fibers in high dimensions, and explained how to prove
that codimension 2 complete intersections nearly always have fibers of
the conjecturally maximal regularity.

Many explicit computations informed our intuition about the subject of this paper.
These would not have been possible without the help of Macaulay2 and its
makers, Dan Grayson and Mike Stillman \cite{M2}. We also used the facilities of Magma,
and we are particularly grateful to Allan Steel, of the Magma group,
for his generous help in writing and running these programs.

\section{Mather's Upper Bounds and Lazarsfeld's Examples}

Mather's trasversality theorem holds for multi-jet spaces and a class of 
their subvarieties that are called modular. Mather showed that if $X$ is an $n$-dimensional 
smooth projective variety, then the multi-jet of a general linear projection 
$f: X \to \P^{n+c}$ is transverse to all the modular subvarieties of the
multi-jet spaces $_sJ^k(X, \P^{n+c})$, and that all the Thom-Boardman strata in multi-jet spaces 
are modular (see the last sentence of \cite{mat1}). He used these results in \cite{mat2} to conclude that in the nice range of dimensions 
($n < \frac{6}{7}(n+c)+\frac{8}{7}$, or $n < \frac{6}{7} (n+c) + \frac{9}{7}$ and $c \leq 3$), 
a general projection is $C^{\infty}$ stable, and this enabled him to describe 
local normal forms for general projections in the nice range of dimensions.  In particular he
proved that when $c=1$ and $n \leq 14$, the degrees of fibers of a general
projection is bounded by $n+1$, just as in the classical cases.

The following result is a consequence of Mather's transversality theorems  \cite{mat1} and
the formula proved by Boardman \cite{boardman} for the codimensions of the Thom-Boardman
strata.  We say that a map $\pi:X\to Y$ between smooth spaces
 has \emph{tangential corank $d$} at a point $p\in X$
if the induced map on tangent spaces has rank $=\dim X-d$ at $p$.

\begin{theorem}[Mather]\label{mather}
Let $X\subset \PP^r$ be a smooth projective variety of dimension $n$ and 
let $\pi:X\to \PP^{n+c}$ be a general linear projection, with $c>0$.
If the fiber $\pi^{-1}(q)$ contains points $p_1,\dots,p_d$
and $\pi$ has corank $d_i$ at $p_i$, then
$$
\sum_{i=1}^d( \frac{d_i^2}{c} + d_i+ 1)
\leq \frac{n}{ c}+1.
$$
\end{theorem}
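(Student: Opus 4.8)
The plan is to reinterpret the configuration in the statement as a point in the preimage of a modular submanifold of a multi-jet space, and then to combine Mather's multi-transversality theorem with Boardman's codimension formula.

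First I would introduce the $d$-fold multi-jet space ${}_dJ^1(X,\PP^{n+c})$, whose points are $d$-tuples of $1$-jets of maps $X\to\PP^{n+c}$ with pairwise distinct source points; it is a fiber bundle over the configuration space $X^{(d)}\subset X^d$ of $d$ distinct points, of dimension $dn$, and also over the target $(\PP^{n+c})^d$. Inside ${}_dJ^1(X,\PP^{n+c})$ I would single out the locus $W$ of multi-jets $(\xi_1,\dots,\xi_d)$ for which all the $\xi_i$ have the same image point in $\PP^{n+c}$ and each $\xi_i$ has tangential corank exactly $d_i$. The common-image condition is the preimage, under the target projection, of the small diagonal of $(\PP^{n+c})^d$, of codimension $(d-1)(n+c)$; the $i$-th corank condition is pulled back from the fiber $J^1(n,n+c)$, in which the locus of corank exactly $d_i$ has codimension $d_i(c+d_i)$ by Boardman's formula. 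Since these conditions involve disjoint sets of coordinates, $W$ is a locally closed submanifold of ${}_dJ^1(X,\PP^{n+c})$ of codimension
$$
(d-1)(n+c)+\sum_{i=1}^d d_i(c+d_i).
$$

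Next I would invoke the fact, recalled above from \cite{mat1}, that Thom-Boardman strata in multi-jet spaces are modular, together with the observation that the diagonal condition on the target is compatible with modularity, so that $W$ is a modular submanifold. Since $\pi$ is a general linear projection, Mather's transversality theorem then applies and shows that the multi-jet extension $j^1_d\pi\colon X^{(d)}\to {}_dJ^1(X,\PP^{n+c})$ is transverse to $W$. The hypothesis that $\pi^{-1}(q)$ contains points $p_1,\dots,p_d$ at which $\pi$ has corank $d_i$ says exactly that $(p_1,\dots,p_d)\in (j^1_d\pi)^{-1}(W)$, so this preimage is non-empty; transversality then forces $\codim W\leq\dim X^{(d)}=dn$, that is,
$$
(d-1)(n+c)+\sum_{i=1}^d d_i(c+d_i)\leq dn.
$$
Subtracting $(d-1)n$ from both sides, dividing by $c$, and rearranging turns this into $\sum_{i=1}^d\bigl(\tfrac{d_i^2}{c}+d_i+1\bigr)\leq\tfrac{n}{c}+1$, which is the assertion.

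I expect the main obstacle to be the careful verification that $W$ --- obtained by intersecting a product of corank strata with the target-diagonal condition --- is genuinely modular in Mather's sense, so that his transversality theorem can be applied; here one must appeal to the precise statements of \cite{mat1}, and also note that $1$-jets already suffice to read off the tangential coranks. The remainder is the elementary codimension bookkeeping indicated above.
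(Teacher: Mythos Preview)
Your proposal is correct and follows exactly the approach the paper indicates: the paper does not supply its own proof of this theorem but simply states it as ``a consequence of Mather's transversality theorems \cite{mat1} and the formula proved by Boardman \cite{boardman} for the codimensions of the Thom-Boardman strata,'' which is precisely what you have spelled out. Your codimension count and the final algebraic manipulation are both right, and your identification of modularity of $W$ as the one point requiring care is apt---this is exactly what Mather's paper supplies.
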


For example, the number of
distinct points in the fiber of a generic projection
is at most $n/c+1$. Mather's theory also implies
that any curvilinear fiber
(that is, one that can be embedded in a smooth curve)
has degree $\leq n/c+1$.
 (Roberts \cite{roberts} proves these results
by a more elementary method, but only
for embeddings of varieties that have been composed with sufficiently
high Veronese maps.) But it is easy to see
that knowing the Thom-Boardman symbol of the projection
at every point is not enough to bound the degree of the fiber
except in the curvilinear case.

In fact, Lazarsfeld shows in \cite{laz} (Volume 2, Prop. 7.2.17)
that for if $X\subset \PP^r$ is a sufficiently ample embedding
of a smooth variety of dimension $n$,
then a general projection of $X$ to $\PP^{n+1}$
will have points of corank $d$ whenever 
$d(d+1) \leq n$. He used this to show that the image of the
projection would have points of mulitplicity at least 
on the order of $2^{\lfloor \sqrt n\rfloor}$. The co-rank $d$ condition
also gives a lower bound on the lengths of the fibers:

\begin{proposition}
If $X$ is a smooth projective variety of dimension $n$ and $p\in X$ is a point of corank $d$ for the projection map 
$\pi: X \to \P^{n+1}$, then the degree of the fiber $\pi^{-1}\pi(p)$ is at least ${{d+1}\choose{\lceil \frac{d}{2} \rceil}}$.  
\end{proposition}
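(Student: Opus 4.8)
\emph{Approach.} The plan is to localize at $p$, reduce the statement to a lower bound on the colength of an ideal generated by $d+1$ power series with vanishing linear part, and then establish that colength bound by an upper--semicontinuity argument that turns it into a computation with general quadrics. First, near $p$ the linear projection $\pi$ is a morphism of smooth varieties, and the corank $d$ hypothesis lets one choose formal coordinates $(x_1,\dots,x_{n-d},u_1,\dots,u_d)$ on $X$ at $p$ and $(x_1,\dots,x_{n-d},y_1,\dots,y_{d+1})$ on $\PP^{n+1}$ at $\pi(p)$ with $\pi(x,u)=(x,\phi(u))$, where $\phi=(\phi_1,\dots,\phi_{d+1})\colon(\A^d,0)\to(\A^{d+1},0)$ satisfies $d\phi_0=0$, i.e.\ $\phi_i\in\mathfrak m^2\subset k[[u_1,\dots,u_d]]$. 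Then $\pi^{-1}\pi(p)$ equals $\{0\}\times\phi^{-1}(0)$ near $p$, so $\deg\pi^{-1}\pi(p)$ is at least the local degree at $p$, which is $\length\big(k[[u]]/(\phi_1,\dots,\phi_{d+1})\big)$, a finite number since a general projection is a finite morphism. Thus it suffices to prove: if $I=(h_1,\dots,h_{d+1})\subset k[[u_1,\dots,u_d]]$ with each $h_i\in\mathfrak m^2$ and $k[[u]]/I$ Artinian, then $\length(k[[u]]/I)\ge\binom{d+1}{\lceil d/2\rceil}$.

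\emph{Reduction to the generic ideal.} I would fix a large $N$ (say $N\ge\lceil d/2\rceil+2$) with $\mathfrak m^N\subset I$ and view $(h_1,\dots,h_{d+1})$ as a point of the affine space $W=(\mathfrak m^2/\mathfrak m^N)^{d+1}$. On the nonempty open locus of $W$ where the generated ideal has finite colength, that colength is upper semicontinuous; as $W$ is irreducible, its value at any point of that locus---in particular at our $I$---is at least its value at the generic point of $W$. So it is enough to compute $\length(k[[u]]/I)$ for a general $(h_1,\dots,h_{d+1})$.

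\emph{The generic computation.} For general $h_i$ the degree--two leading forms $q_1,\dots,q_{d+1}$ are general quadrics, so $q_1,\dots,q_d$ form a regular sequence, and by the tangent--cone criterion $B:=k[[u]]/(h_1,\dots,h_d)$ is an Artinian complete intersection with $\mathrm{gr}_{\mathfrak m}(B)=k[u]/(q_1,\dots,q_d)$, which has Hilbert function $k\mapsto\binom dk$ and length $2^d$. Then $A:=k[[u]]/I=B/\bar h_{d+1}B$, and $\mathrm{gr}_{\mathfrak m}(A)$ is a quotient of $\mathrm{gr}_{\mathfrak m}(B)/(\bar q_{d+1})$ for a general quadric $\bar q_{d+1}$ in $\mathrm{gr}_{\mathfrak m}(B)$. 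Using that multiplication by a general quadric on the complete intersection $\mathrm{gr}_{\mathfrak m}(B)$ has maximal rank in every degree---a Lefschetz--type property; in characteristic $0$ one may instead take $h_1=u_1^2,\dots,h_d=u_d^2$, $h_{d+1}=\ell^2$ for a general linear form $\ell$ and invoke the Strong Lefschetz Property of the monomial complete intersection $k[u]/(u_1^2,\dots,u_d^2)$---one gets that $\mathrm{gr}_{\mathfrak m}(A)$ has Hilbert function $k\mapsto\max\{0,\binom dk-\binom d{k-2}\}$, and that its initial ideal adds nothing beyond $(\bar q_{d+1})$ in the nonzero degrees. This function vanishes for $k>\lceil d/2\rceil$, and telescoping yields $\length(A)=\sum_{0\le k\le\lceil d/2\rceil}\big(\binom dk-\binom d{k-2}\big)=\binom d{\lceil d/2\rceil}+\binom d{\lceil d/2\rceil-1}=\binom{d+1}{\lceil d/2\rceil}$. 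Together with the previous step, this gives the claimed bound (and, via general quadrics, shows it is sharp for the local problem).

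\emph{Main obstacle.} The heart of the argument is the generic computation: one must know that the ``expected'' Hilbert function is genuinely attained, which is exactly a maximal--rank statement for multiplication by a general form on an Artinian complete intersection. In characteristic $0$ this is available through the Strong Lefschetz Property of monomial complete intersections, so the real work is in setting up the model computation precisely---checking that the tangent cone is the complete intersection of the leading quadrics and that the final hypersurface section collapses nothing more than the Lefschetz count predicts---after which the semicontinuity step transports the value to an arbitrary fiber.
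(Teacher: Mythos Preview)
Your reduction to the local problem is correct and matches the paper: it suffices to bound below the colength of an ideal $I=(h_1,\dots,h_{d+1})\subset k[[u_1,\dots,u_d]]$ with each $h_i\in\mathfrak m^2$. The semicontinuity step is also fine: the colength at your given $I$ is at least the colength at the generic point of the parameter space $W$.

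The gap is in the generic computation, and it is a genuine one of \emph{direction}. You correctly observe that $\mathrm{gr}_{\mathfrak m}(A)$ is a \emph{quotient} of $\mathrm{gr}_{\mathfrak m}(B)/(\bar q_{d+1})$, and Lefschetz gives the Hilbert function of the latter. But a quotient has \emph{smaller} length, so this yields only
\[
\length A \;=\; \length\,\mathrm{gr}_{\mathfrak m}(A)\;\le\;\length\bigl(\mathrm{gr}_{\mathfrak m}(B)/(\bar q_{d+1})\bigr)\;=\;\binom{d+1}{\lceil d/2\rceil}.
\]
Combined with semicontinuity (original $\ge$ generic), this gives no lower bound whatsoever on the original colength. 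Your assertion that ``its initial ideal adds nothing beyond $(\bar q_{d+1})$ in the nonzero degrees'' is exactly the statement that the quotient map is an isomorphism, i.e.\ that $h_1,\dots,h_{d+1}$ form a standard basis---but since $q_1,\dots,q_{d+1}$ are $d+1$ quadrics in only $d$ variables, they are \emph{not} a regular sequence, and the standard-basis property is not automatic. Proving it amounts to showing the generic colength is $\ge\binom{d+1}{\lceil d/2\rceil}$, which is the very thing at issue. The suggested alternative of specializing to $h_i=u_i^2$, $h_{d+1}=\ell^2$ does not help: by semicontinuity a specific point again only gives an \emph{upper} bound on the generic value. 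You have also misidentified the main obstacle: the Lefschetz/maximal-rank input is not the difficulty; the difficulty is the unproven equality $\mathrm{in}(I)=(q_1,\dots,q_{d+1})$.

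The paper avoids this entirely by a short trick: set $S=R[[y]]$ and write $R/I=S/J$ with $J=(y)+IS$; then perturb to $f_i'=f_i+q_i$ with $q_i$ general quadrics \emph{in $S$}. Now there are $d+1$ elements in $d+1$ variables whose leading forms are a \emph{regular sequence}, hence automatically a standard basis, so the Hilbert function of $S/I'$ is exactly $m\mapsto\binom{d+1}{m}$. Semicontinuity gives $\length(R/I)=\length(S/J)\ge\length(S/J')$, and the elementary estimate
\[
\length\bigl(S/((y)+I')\bigr)\;\ge\;\length\,S/(I'+\mathfrak p^{m+1})-\length\,S/(I'+\mathfrak p^{m})\;=\;\binom{d+1}{m}
\]
with $m=\lceil d/2\rceil$ finishes the proof---no Lefschetz property needed. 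The extra variable is precisely what turns your problematic inequality into an equality.
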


\begin{proof}
Let $\hat{\O}_{X,p}$ be the completion of the local ring of $X$ at $p$, and let $\mathfrak{m}$ be its maximal ideal. Let 
$f_1, \dots, f_{n+1} \in \hat{O}_{X,p}$ be the functions locally defining $\pi$ at $p$. Since $p$ is a point of corank $d$, we can assume 
that $n-d$ of the $f_i$, say $f_{d+2},\dots,f_{n+1}$, form a regular system of parameters, so
 that $R := \hat{\O}_{X,p} / (f_{d+2},\dots, f_{n+1})$ is a power series ring in $d$ variables, 
while
 $f_1, \dots, f_{d+1} \in \mathfrak{m}^2$.
 Set $I = (f_1, \dots, f_{d+1})$.

Setting $S=R[[y]]$, we can write $R/I=S/J$ where $J = (y) + IS$. 
Denote by $\mathfrak{p}$ the maximal ideal of $S$, and
let $f_i'=f_i+q_i$, where the $q_i$ are general quadratic forms of $S$. 
Let $I' =(f_1',\dots,f_{d+1}')S$, and set $J' = (y)+I'$. Note that the
leading terms of the $f_i'$ form a regular sequence.

By the semicontinuity of fiber dimension, 
the length of $S/J = R/I$ is at least the length of $S/J'$, so it
suffices to show that the latter is at least
${{d+1}\choose{\lceil \frac{d}{2} \rceil}}$.  

Since the leading terms of the $f_i'$ form a regular sequence,
  the $f_i$ are a standard basis in the sense of Grauert and
Hironaka. It follows that the Hilbert function of $S/I'$ is the same
as that of a complete intersection of quadrics,
$$
 \length((I'+\mathfrak{p}^m)/(I'+ \mathfrak{p}^{m+1}))={{d+1}\choose{m}}.
$$
Further, 
the length of $S/J' = S/((y)+I')$ 
is at least the length of the cokernel of the map given by multiplication 
$$ y: S/(I', \mathfrak{p}^{m}) \to S/(I', {\mathfrak p}^{m+1})$$ 
for any $m$. Thus 
$$
\length S/J' \geq \length S/(I', \mathfrak{p}^{m+1}) - \length S/(I', \mathfrak{p}^m) = 
\length((I', \mathfrak{p}^m)/(I', \mathfrak{p}^{m+1})).
$$
If we let $m = \lceil d/2 \rceil$, we get $\length S/J \geq {{d+1}\choose{\lceil d/2 \rceil}}$
as required.
\end{proof}

Applying the Stirling approximation for factorials we deduce an assymptotic formula:

\begin{corollary}
If $X$ is a smooth projective variety of dimension $n$ embedded by a sufficiently positive line bundle,
then a general projection $X$ into $\P^{n+1}$ will have some fibers whose 
degrees are of the order of $\sqrt{\frac{2}{\pi}} \; \frac{2^{\sqrt{n}}}{n^{1/4}}$. \qed
\end{corollary}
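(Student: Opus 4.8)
The plan is to combine the previous Proposition with Lazarsfeld's existence result for high-corank points and then apply Stirling's formula. By \cite{laz} (Volume 2, Prop. 7.2.17), recalled in the discussion above, if $X$ is embedded by a sufficiently positive line bundle then a general projection $\pi: X\to\PP^{n+1}$ has a point $p$ of corank $d$ for every integer $d$ with $d(d+1)\le n$. First I would take $d$ as large as this inequality permits, namely $d=\lfloor\tfrac12(\sqrt{4n+1}-1)\rfloor$, so that $d=\sqrt n+O(1)$ as $n\to\infty$. The previous Proposition then yields the lower bound
$$
\deg\,\pi^{-1}\pi(p)\ \geq\ \binom{d+1}{\lceil d/2\rceil}.
$$

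It remains to estimate this near-central binomial coefficient. Writing $N=d+1$ and using $m!=(1+o(1))\sqrt{2\pi m}\,(m/e)^m$, one gets $\binom{2k}{k}=(1+o(1))\,4^k/\sqrt{\pi k}$; since the next-to-central entries of a row differ from the central one by a factor $1+O(1/k)$, this gives in all cases $\binom{N}{\lceil (N-1)/2\rceil}=(1+o(1))\,2^{N}\sqrt{2/(\pi N)}$. Evaluating the right-hand side at $N=\sqrt n$ produces exactly $\sqrt{2/\pi}\;2^{\sqrt n}/n^{1/4}$, and since our $N=d+1$ differs from $\sqrt n$ only by a bounded amount this changes the leading constant by a bounded factor but not the order of growth; hence the fibers $\pi^{-1}\pi(p)$ have degree of that order, as asserted.

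The argument is a one-line citation of the Proposition followed by Stirling's formula, so there is essentially no obstacle. The one thing to watch is the bookkeeping of the floor and ceiling: because the bounded discrepancy $d-\sqrt n$ sits in an exponent of $2$ it does affect the precise leading constant, which is why the statement is phrased with ``of the order of''; the clean constant $\sqrt{2/\pi}$ is the value obtained from the idealized binomial coefficient $\binom{\sqrt n}{\lceil \sqrt n/2\rceil}$.
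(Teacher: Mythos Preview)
Your argument is correct and is precisely what the paper intends: the paper's own ``proof'' is just the one-line remark ``Applying the Stirling approximation for factorials we deduce an asymptotic formula'' followed by \qed, and you have simply spelled out the details of combining Lazarsfeld's corank bound, the preceding Proposition, and Stirling. Your observation about the bounded discrepancy $d-\sqrt n$ sitting in the exponent (hence the phrasing ``of the order of'') is exactly the right caveat.
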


\section{Proof of The Main Theorem}

\begin{proof}[Proof of Theorem \ref{main1}] 
Let $\Sigma \subset \P^r$ be a general linear subvariety of 
codimension $n+c+1$ in $\P^r$, and let $\pi_{\Sigma}: X \to \P^{n+c}$ be projection from 
$\Sigma$, so that in particular
 $\pi_{\Sigma}$ is generically injective and is a finite map.
Thus if $\Lambda$ is 
any codimension $n+c$ linear subvariety of $\P^r$ that contains $\Sigma$, the intersection of $\Lambda$ and $X$ is a scheme of dimension zero. 
Fix a (general) $\Sigma$, and a $\Lambda\supset \Sigma$ that makes
the degree of $X \cap \Lambda$ maximal.

Consider the natural surjections of sheaves
$$
N_{\Lambda/\PP^r} = \Hom(\cI_{\Lambda}, \cO_\Lambda) \twoheadrightarrow
 \Hom(\cI_{\Lambda}, \cO_Z) \twoheadrightarrow Q(X, \Lambda),
$$
and let $F$ be the kernel of the composite map 
$\gamma: N_{\Lambda/\PP^r} \to Q(X, \Lambda).$
Identifying the local sections of $N_{\Lambda/\PP^r} $ with the 
embedded deformations of $\Lambda$ in $\PP^r$ (the tangent
space to the Grassmannian), we see from the definition of $Q(X,\Lambda)$
that the local sections of $F$ are the deformations of $\Lambda$
that induce flat deformations of $Z$. Since
$Q(X, \Lambda)$ is supported on $Z\subset X,$ which is disjoint from
$\Sigma$, we have
$F|_\Sigma=N_\Lambda|_\Sigma$. (See Theorem \ref{Q and deformations} for another
description of $F$.)
 
 Because $\Sigma$ is a hyperplane section of $\Lambda$
 there is an exact sequence
 $$
 0\to N_{\Lambda/\PP^r}(-1) \to N_{\Lambda/\PP^r} \to N_{\Lambda/\PP^r}|_\Sigma \to 0.
 $$
Putting this together with the exact sequence
 $$
 0\to F\to N_{\Lambda/\PP^r} \to Q(X, \Lambda)\to 0,
 $$
 coming from the definition of $F$,
  and taking global sections, we get a commutative diagram
  with exact row and column
$$\leqno{(*)}\qquad\qquad
\begin{diagram}[small]
&&H^0(N_\Lambda(-1))\cr
&&\dTo &\rdTo^{\beta} \cr
H^0(F)&\rTo&H^0(N_\Lambda)& \rTo_{\kern -30pt \gamma} &H^0(Q)= Q\cr
&\rdTo^\alpha&\dTo\cr
&&H^0(N_{\Lambda/\PP^r} |_\Sigma)= H^0(F|_\Sigma)
\end{diagram}
$$
where $\alpha, \beta$ and $\gamma$ are
defined by applying $H^0$ to the evident maps, and
$Q=Q(X, \Lambda)$.

There is an exact sequence of normal bundles 
$$
\xymatrix{
0 \ar[r] & N_{\Sigma/\Lambda} \ar[r]  & N_{\Sigma/\P^r} \ar[r]^{\psi} &  N_{\Lambda/\P^r}|_{\Sigma} \ar[r] & 0.
}
$$
Since $N_{\Sigma/\Lambda}= \cO_{\Sigma}(1)$, 
every global section of $N_{\Lambda/\P^r}|_{\Sigma}$
lifts to a global section of $N_{\Sigma/\P^r}$, that is, a deformation
of $\Sigma$.
The length of $X \cap \Lambda$ is semicontinuous as we move
$\Lambda$, and we have supposed that it is maximal among
those lengths attained by a $\Lambda$ containing a
general plane $\Sigma$; so if we move $\Sigma$ in a flat family $\Sigma_t\subset \PP^r$,
then (for small, or first-order $t$)
 there is a flat deformation $\Lambda_t\subset \PP^r$ of $\Lambda$
such that $\Sigma_t\subset \Lambda_t$
and the degree of $Z=X\cap \Lambda_t$ is 
constant---that is,  the family $Z_t := X \cap \Lambda_t$
is flat. Thus any first order deformation of $\Sigma$ can be lifted
to a first order deformation of $\Lambda$ fixing the length of 
$X \cap \Lambda$.

Using the identification
of sections of $F$ with deformations of $\Lambda$
fixing the length of $Z$, we see that
$$
H^0(F) \rTo^\alpha H^0(F|_{\Sigma})
= H^0(N_{\Lambda/\P^r}|_{\Sigma})
$$ 
contains the image of $H^0(N_{\Sigma/\P^r})$
under $\psi$. Since $H^1(N_{\Sigma/\Lambda})=H^1(\cO_\Sigma(-1))=0$,
$\psi$ is surjective on global sections, and thus $\alpha$ is surjective.

A diagram chase using the surjectivity of $\alpha$ and the exactness of the row and column in diagram (*) shows that 
the image of $\beta$ is equal to the image of $\gamma$. 
Moreover, $N_\Lambda(-1)\cong \O_{\Lambda}^{n+c}$ is 
generated by global sections.Thus we can apply
the following result with $A=N_\Lambda(-1)$ and $B=Q(X, \Lambda)$.

\begin{proposition}\label{dim0} Suppose that $\delta:A\to B$ is an epimorphism of 
coherent sheaves on $\PP^r$,
and suppose that $A$ is generated by global sections.
If $\delta(H^0(A)) \subset H^0(B)$ has the same dimension
as  $\delta(H^0(A(1))) \subset H^0(B(1))$, then $\dim B = 0$ and
$\delta(H^0(A(m)))=H^0(B(m))\cong H^0(B)$ for all $m\geq 0$.
\end{proposition}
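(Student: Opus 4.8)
The plan is to reduce the statement to a question about the generators of the graded module $M := \bigoplus_{m\geq 0} H^0(B(m))$ and the graded submodule generated by $\delta(H^0(A)) \subset M_0$. Since $A$ is globally generated, for each $m\geq 0$ the image $\delta(H^0(A(m)))$ contains the degree-$m$ piece of the submodule $N \subset M$ generated by the image of $H^0(A)$ in degree $0$; more precisely, multiplying sections of $A$ by linear forms and applying $\delta$ shows $H^0(\O_{\PP^r}(1)) \cdot \delta(H^0(A(m))) \subseteq \delta(H^0(A(m+1)))$. So the hypothesis says that in passing from degree $0$ to degree $1$, the submodule generated by $\delta(H^0(A))$ does not grow: multiplication by every linear form sends $\delta(H^0(A)) \subseteq H^0(B)$ into a subspace of $H^0(B(1))$ of the same dimension as $\delta(H^0(A))$ itself. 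But because $A$ is globally generated, $\delta(H^0(A))$ already surjects onto the fiber of $B$ at every point (Nakayama), so the image of $A$ under $\delta$ being globally generated is forced — hence I first want to observe that $B$ is itself globally generated, with $H^0(B) = \delta(H^0(A))$ is not automatic, but the span of the image generates $B$.

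First I would make precise the multiplication statement: choosing a basis $x_0,\dots,x_r$ of $H^0(\O_{\PP^r}(1))$, the map $H^0(\O(1)) \otimes \delta(H^0(A)) \to H^0(B(1))$ has image $\delta\big( H^0(\O(1))\cdot H^0(A)\big)$, which lies in $\delta(H^0(A(1)))$; the hypothesis that $\delta(H^0(A(1)))$ has the same dimension as $\delta(H^0(A))$ then forces $x_i \cdot v \in \delta(H^0(A))$ for every linear form $x_i$ and every $v \in \delta(H^0(A))$ — that is, $\delta(H^0(A))$ is stable under multiplication by all of $H^0(\O(1))$. Iterating, $\delta(H^0(A))$ is stable under multiplication by every form of every positive degree, so the graded submodule $N \subseteq M$ it generates satisfies $N_m = \delta(H^0(A))$ for all $m\geq 0$, a fixed finite-dimensional space.

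Next I would use global generation of $A$ to show $N$ determines $B$. Since $\delta:A\to B$ is surjective and $A$ is globally generated, $B$ is globally generated, and the image sheaf of the evaluation $H^0(B)\otimes\O \to B$ contains the image of $\delta$ applied to the evaluation of $A$, which is all of $B$; thus the sheaf $\widetilde{N}$ associated to the module $N$ equals $B$. But $N$ has $N_m$ of constant finite dimension for all $m\geq 0$, so its Hilbert polynomial is constant; therefore $\dim\operatorname{Supp}B = \dim\widetilde N = 0$. Once $\dim B = 0$, the module $M=\bigoplus H^0(B(m))$ has $\dim_k M_m = \deg B$ for all $m\gg 0$, and in fact $H^0(B(m))\to H^0(B(m+1))$, $s\mapsto x_i s$, is an isomorphism for all $m\geq 0$ because $B(m)\cong B$ non-canonically and twisting by $\O(1)$ is trivial on a zero-dimensional sheaf — so $\dim_k H^0(B(m))$ is independent of $m$. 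Combining: $\delta(H^0(A(m))) \supseteq N_m = \delta(H^0(A)) $ has dimension equal to $\dim_k H^0(B) = \dim_k H^0(B(m))$, and since $\delta(H^0(A(m)))\subseteq H^0(B(m))$, equality holds, giving $\delta(H^0(A(m))) = H^0(B(m))\cong H^0(B)$.

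The main obstacle I expect is the step showing $\dim B = 0$: one must argue that a graded module whose homogeneous pieces eventually have constant (finite) dimension has a zero-dimensional associated sheaf, and that the sheaf built from the "stabilized" submodule $N$ genuinely recovers all of $B$ rather than a proper subsheaf. The cleanest route is probably to note that $B/\widetilde N$ has no global sections in any nonnegative twist (since $N_m = M'_m$ where $M' := \bigoplus H^0(B(m))$ — here one needs that the saturation of $N$ is $M'$, which follows from global generation of $B$), and then invoke that a globally generated coherent sheaf with no sections is zero; alternatively, compute directly with the evaluation map. Everything else is a diagram-free bookkeeping of multiplication maps on sections.
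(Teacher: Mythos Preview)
Your outline is close to the paper's, but the central step is wrong as stated and the fix is exactly the idea you are missing. The claim ``$x_i\cdot v\in \delta(H^0(A))$'' is a type error: $v\in\delta(H^0(A))\subset H^0(B)$, while $x_i\cdot v\in H^0(B(1))$, so there is no sense in which $\delta(H^0(A))$ can be ``stable under multiplication by $H^0(\cO(1))$''. Reinterpreted as a dimension statement, what you actually get from the hypothesis is only the inequality
\[
\dim\bigl(H^0(\cO(1))\cdot \delta(H^0(A))\bigr)\ \leq\ \dim\delta(H^0(A(1)))\ =\ \dim\delta(H^0(A)),
\]
and you never establish the reverse inequality. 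Without it you cannot iterate: knowing $\dim N_1\leq \dim N_0$ tells you nothing about $\dim N_2$, since the hypothesis gives no control over $\delta(H^0(A(2)))$.

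The missing idea, and the one the paper uses, is to choose a single linear form $x$ that avoids every associated subvariety of $B$. Then multiplication by $x$ is injective on $H^0(B)$, so $x\cdot\delta(H^0(A))$ already has dimension $\dim\delta(H^0(A))$; sandwiching gives the equality $x\cdot\delta(H^0(A))=H^0(\cO(1))\cdot\delta(H^0(A))=\delta(H^0(A(1)))$, and now iteration is immediate: $N_m=x^m\cdot N_0$ for all $m$. From there your steps (global generation forces $N_m=H^0(B(m))$ for $m\gg 0$, hence constant Hilbert polynomial, hence $\dim B=0$, hence $H^0(B(m))\cong H^0(B)$ for all $m$) go through. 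The obstacle you flagged---showing $\widetilde N=B$---is the easy part; the real content is the non-zerodivisor trick.
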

\begin{proof}
We may harmlessly assume that the ground field is infinite, so we may
choose a linear form $x$ on $\PP^r$ that does not vanish on any associated
subvariety of $B$. It follows that multiplication by $x$ is a monomorphism
on global sections, so, under our hypothesis, 
$$
x\cdot \delta(H^0(A)) = H^0(\cO_{\PP^r}(1))\cdot \delta(H^0(A)) = \delta(H^0(A(1))).
$$
It follows that 
$x^m\cdot \delta(H^0(A)) = H^0(\cO_{\PP^r}(m))\cdot  \delta(H^0(A))$ for all $m\geq 0$. Since $\delta$ is
a surjection of sheaves and $A$ is globally generated, this space is equal to 
$H^0(B(m))$ for large $m$. It follows that the Hilbert polynomial of $B$ is constant,
so $B$ is zero-dimensional, and $H^0(B(m))=\delta(H^0(A(m)))$ for all $m\geq 0$ as
required.
\end{proof}

Returning to the proof of Theorem \ref{main1} we
apply Proposition \ref{dim0}
and deduce that $\beta$ is surjective.
Thus 
$$
\length Q(X, \Lambda)\leq \dim_k H^0(N_\Lambda(-1)) = n+c,
$$
 as required.
\end{proof}

\section{Bounds on the invariant $q$}

We begin by explaining why
the ``expected'' value of $q(X,Y)$ is $\deg(X\cap Y)$. We write $\Hilb_{X}$ for the Hilbert scheme of $X$,
and
$T_{\Hilb_{X,[Z]}}$ for its tangent space
at the point corresponding to the scheme $Z$. If $[Z]$ is a smooth
point of the Hilbert scheme, and if $Z$ deforms in $X$ to a reduced
set of smooth points, then $\dim T_{\Hilb_{X,[Z]}} = (\deg Z)(\dim X)$, 
which we may thus consider to be the ``expected value''. We denote by
$T^1(Z)$ the Zariski tangent space to the deformation space of
$Z$. We denote by $\Der\ \cO_Z$ the module of $k$-linear derivations from $\cO_Z$ to itself.

\begin{theorem}\label{deformation connection}
Suppose that $X,Y\subset P$ are $k$-schemes of finite type with $X$ smooth and $Y$ locally a complete intersection
in $P$. If $Z:=X\cap Y$ is finite, then
$$\aligned
\dim_k Q(X,Y) &= (\deg Z)(\codim Y-\dim X)-(\dim_k T_{\Hilb_{X,[Z]}}-(\deg Z)(\dim X))\cr
&=(\deg Z)(\codim Y-\dim X)-\dim_k T^1(Z)+\dim_k \Der\ \cO_Z.
 \endaligned
 $$
\end{theorem}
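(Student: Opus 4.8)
The plan is to compute $\dim_k Q(X,Y)$ by relating the defining cokernel to standard deformation-theoretic modules. Since the statement is local on $Z$ and $Z$ is finite, I would work in the local ring $\mathcal{O}_{P,z}$ at a point $z\in Z$, or simply pass to global $\mathcal{O}_Z$-modules since $Z$ is affine. Write $A = \mathcal{O}_{X,Z}$, a smooth (hence regular) $k$-algebra localized at $Z$ of dimension $\dim X$, and let $Z$ be cut out in $X$ by the ideal $I = \mathcal{I}_{Z/X}$. Because $Y$ is locally a complete intersection of codimension $\codim Y$ in $P$, the conormal sheaf $\mathcal{I}_{Y/P}/\mathcal{I}_{Y/P}^2$ restricted to $Z$ is free of rank $\codim Y$ over $\mathcal{O}_Z$, so $\Hom(\mathcal{I}_{Y/P}/\mathcal{I}_{Y/P}^2,\mathcal{O}_Z) \cong \mathcal{O}_Z^{\,\codim Y}$ and has length $(\deg Z)(\codim Y)$. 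The first step is therefore to identify the image of the restriction map $\Hom(\mathcal{I}_{Z/X}/\mathcal{I}_{Z/X}^2,\mathcal{O}_Z)\to \mathcal{O}_Z^{\,\codim Y}$ and take lengths, using additivity of length in the exact sequence
$$
0 \to \mathrm{image} \to \mathcal{O}_Z^{\,\codim Y} \to Q(X,Y) \to 0.
$$

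The key technical input is the standard exact sequence governing $T^1$ of the lci-in-smooth situation. Since $X$ is smooth, $Z = X\cap Y \subset X$ has $T^1(Z) = T^1_{Z/k}$ computed from the cotangent complex of $Z$, which in this setting fits into
$$
0 \to \Der\,\mathcal{O}_Z \to \Hom_{\mathcal{O}_Z}(\Omega_{X/k}\otimes\mathcal{O}_Z,\mathcal{O}_Z) \to \Hom_{\mathcal{O}_Z}(I/I^2,\mathcal{O}_Z) \to T^1(Z) \to 0,
$$
because $\Omega_{X/k}\otimes \mathcal{O}_Z$ is free and the map $\Der\,\mathcal{O}_Z \to \mathrm{Der}(A)\otimes\mathcal{O}_Z$ is the natural one with cokernel controlled by the normal module. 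Here $\Hom_{\mathcal{O}_Z}(\Omega_{X/k}\otimes\mathcal{O}_Z,\mathcal{O}_Z)$ has length $(\deg Z)(\dim X)$ since $\Omega_{X/k}$ is locally free of rank $\dim X$. Similarly $T_{\Hilb_{X,[Z]}} = \Hom_{\mathcal{O}_Z}(I/I^2,\mathcal{O}_Z)$, which immediately gives the identity
$$
\dim_k T_{\Hilb_{X,[Z]}} = (\deg Z)(\dim X) + \dim_k T^1(Z) - \dim_k \Der\,\mathcal{O}_Z,
$$
establishing the equivalence of the two right-hand sides in the theorem. So the real content is the first equality.

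To get the first equality, I would show that the image of $\Hom(I/I^2,\mathcal{O}_Z) \to \Hom(\mathcal{I}_{Y/P}/\mathcal{I}_{Y/P}^2,\mathcal{O}_Z)$ has length exactly $(\deg Z)(\dim X) - \dim_k\Der\,\mathcal{O}_Z$. The map $\mathcal{I}_{Y/P}/\mathcal{I}_{Y/P}^2 \otimes \mathcal{O}_Z \to I/I^2$ is surjective (restricting equations of $Y$ cuts out $Z$ in $X$, as $X$ is smooth and $Z = X\cap Y$), inducing the injection on the $\Hom$'s into $\mathcal{O}_Z^{\,\codim Y}$; and the composite $\Hom(I/I^2,\mathcal{O}_Z) \hookrightarrow \mathcal{O}_Z^{\,\codim Y}$ is precisely the restriction map defining $Q$. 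Now compose further with the conormal sequence of $Z\subset X\subset P$: the point is that the map $\Hom(I/I^2,\mathcal{O}_Z) \to \mathcal{O}_Z^{\,\codim Y}$ factors as $\Hom(I/I^2,\mathcal{O}_Z) \to \Hom(\Omega_{X/k}\otimes\mathcal{O}_Z,\mathcal{O}_Z) = \mathcal{O}_Z^{\,\dim X}$ composed with... no — rather, I would use that $Q(X,Y) = Q(Y,X)$ up to free summands is \emph{not} what I want here; instead I directly compute. The image of $\Hom(I/I^2,\mathcal{O}_Z)$ in $\mathcal{O}_Z^{\,\codim Y}$ has the same length as $\Hom(I/I^2,\mathcal{O}_Z)$ minus the length of the kernel of the restriction; but the restriction $\Hom(I/I^2,\mathcal{O}_Z)\to \Hom(\mathcal{I}_{Y/P}/\mathcal{I}_{Y/P}^2,\mathcal{O}_Z)$ is \emph{injective} because $\mathcal{I}_{Y/P}/\mathcal{I}_{Y/P}^2\otimes\mathcal{O}_Z \twoheadrightarrow I/I^2$ is surjective with $\mathcal{O}_Z^{\,\codim Y}$ free. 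Hence
$$
\dim_k Q(X,Y) = (\deg Z)(\codim Y) - \dim_k T_{\Hilb_{X,[Z]}} = (\deg Z)(\codim Y - \dim X) - \bigl(\dim_k T_{\Hilb_{X,[Z]}} - (\deg Z)(\dim X)\bigr),
$$
which is the first equality, and combined with the earlier computation of $T_{\Hilb}$ gives the second.

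The main obstacle I anticipate is handling the subtlety that $\mathcal{I}_{Y/P}/\mathcal{I}_{Y/P}^2$ must be restricted to $Z$ correctly and that $\mathcal{I}_{Y/P}/\mathcal{I}_{Y/P}^2 \otimes \mathcal{O}_Z \to \mathcal{I}_{Z/X}/\mathcal{I}_{Z/X}^2$ is genuinely surjective — this uses both that $Y$ is lci (so its conormal module behaves well and is free on $Z$) and that $X$ is smooth and transversality plays no role (we only need $Z = X\cap Y$ set-theoretically and scheme-theoretically, which is the definition of the intersection). One must be careful that $P$ need not be smooth, so the conormal sequence for $Z\subset X\subset P$ requires only $X\hookrightarrow P$; I would circumvent any issue by never invoking smoothness of $P$ and working entirely with $A = \mathcal{O}_{X,Z}$ regular, $I\subset A$, and the free module $\mathcal{I}_{Y/P}/\mathcal{I}_{Y/P}^2\otimes_{\mathcal{O}_P}\mathcal{O}_Z \cong \mathcal{O}_Z^{\,\codim Y}$ surjecting onto $I/I^2$. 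Finally, the identification $T^1(Z) = \mathrm{coker}(\mathrm{Der}(A)\otimes\mathcal{O}_Z \to \Hom(I/I^2,\mathcal{O}_Z))$ and $\ker$ of that same map $= \Der\,\mathcal{O}_Z$ is the standard lci deformation sequence (as $Z\subset X$ smooth), so the bookkeeping is routine once the injectivity of the restriction map is in hand.
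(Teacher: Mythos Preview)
Your proposal is correct and follows essentially the same route as the paper: both use that $\Hom(\cI_{Y/P}/\cI_{Y/P}^2,\cO_Z)\cong\cO_Z^{\codim Y}$ from the lci hypothesis, identify $\Hom(\cI_{Z/X}/\cI_{Z/X}^2,\cO_Z)$ with $T_{\Hilb_{X,[Z]}}$, and invoke the four-term exact sequence linking $\Der\,\cO_Z$, $\Hom(\Omega_{X/k}|_Z,\cO_Z)$, the normal module, and $T^1(Z)$. You make explicit the injectivity of the restriction map (via surjectivity of $\cI_{Y/P}/\cI_{Y/P}^2\otimes\cO_Z\twoheadrightarrow \cI_{Z/X}/\cI_{Z/X}^2$), which the paper uses tacitly when it writes $\dim_k Q(X,Y)=(\deg Z)(\codim Y)-\dim_k\Hom((I_X+I_Y)/(I_X+I_Y^2),\cO_Z)$; aside from your brief false start on computing the image length directly, the arguments coincide.
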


\begin{proof}
To compute the dimension of $Q(X,Y)$, we note first that since $I_Y:=I_{Y/P}$ is locally
a complete intersection
the module $I_Y/I_Y^2$ is locally free over $\cO_Y$ of rank equal to the codimension of $Y$. It follows that
$\Hom(I_Y/I_Y^2, \cO_Z) \cong (\cO_Z)^{\codim Y}$. Using this and the definition of $Q$ we get
$$
\dim_k Q(X,Y) = (\deg Z)(\codim Y) - \dim_k \Hom((I_X+I_Y)/(I_X+I_Y^2), \cO_Z).
$$
On the other hand, we may identify $\Hom((I_X+I_Y)/(I_X+I_Y^2, \cO_Z)$ with the tangent space to the
functor of embedded deformations of $Z$ in $X$, that is, with the tangent
space to the Hilbert scheme $\Hilb_X$ at $[Z]$. There is an exact sequence
$$
\aligned
0\to \Hom(\Omega_{Z/k}, \cO_Z) \to &\Hom(\Omega_{X/k} \mid_Z, \cO_Z)
\to\cr
\Hom((I_X+I_Y)/&(I_X+I_Y^2), \cO_Z)
\to
T^1(Z)
\to 0
\endaligned
$$
(see Eisenbud \cite{eisenbud1}, Ex. 16.8.)
Since $X$ is smooth we have
$$
\dim_k\Hom(\Omega_{X/k} \mid_Z, \cO_Z) = (\deg Z)^{\dim X}
$$
and the desired formula follows.
\end{proof}

\begin{corollary}\label{expected length}
Let $X,Y\subset P$ be schemes of finite type with $X$ smooth and $Y$ locally a complete intersection
in $P$. Suppose that $Z=X\cap Y$ is finite, and consider the corresponding point
$[Z]$ in the Hilbert scheme $\Hilb_X$. If $[Z]$ is a smooth point and lies
in the closure of the locus of reduced subschemes, then
$$q(X,Y) = \frac{\dim_k Q(X,Y)}{\codim Y - \dim X} = \deg \cO_Z.$$
In particular, this is the case when $Z$ is a complete intersection.
\end{corollary}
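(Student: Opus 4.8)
The plan is to deduce everything directly from Theorem \ref{deformation connection}. That theorem gives
$$
\dim_k Q(X,Y)=(\deg Z)(\codim Y-\dim X)-\bigl(\dim_k T_{\Hilb_{X,[Z]}}-(\deg Z)(\dim X)\bigr),
$$
and since $\dim Z=0$ the asserted equality $q(X,Y)=\dim_k Q(X,Y)/(\codim Y-\dim X)=\deg\cO_Z$ is equivalent to the single identity $\dim_k T_{\Hilb_{X,[Z]}}=(\deg Z)(\dim X)$. So the whole task is to show that, under the two hypotheses on $[Z]$, the tangent space to $\Hilb_X$ at $[Z]$ has exactly this ``expected'' dimension.

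First I would record the baseline computation at a reduced configuration: if $[Z']\in\Hilb_X$ corresponds to a set of $d=\deg Z$ distinct (hence smooth) points $p_1,\dots,p_d$ of $X$, then $T_{\Hilb_{X,[Z']}}=\Hom(\cI_{Z'/X},\cO_{Z'})=\bigoplus_{i=1}^d T_{X,p_i}$ has dimension $d\cdot\dim X$. Since the locus $U\subseteq\Hilb_X$ of such reduced subschemes is dominated by the complement of the diagonals in the $d$-fold self-product of $X$, it is irreducible (here I use that $X$ is a variety; otherwise argue over each connected component of $X$ separately) of dimension exactly $d\cdot\dim X$, and comparison with the inequality $\dim_{[Z']}\Hilb_X\geq d\cdot\dim X$ shows that every point of $U$ is a smooth point of $\Hilb_X$. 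Now run the component argument: since $[Z]$ is a smooth point of $\Hilb_X$ it lies on a unique irreducible component $C$, so $\dim_k T_{\Hilb_{X,[Z]}}=\dim_{[Z]}\Hilb_X=\dim C$; the hypothesis that $[Z]$ lies in the closure of $U$ (smoothability of $Z$) produces an irreducible curve through $[Z]$ whose general point lies in $U$, and this curve lies in $C$, so $U\cap C$ is a nonempty open, hence dense, subset of $C$, giving $C=\overline U$; therefore $\dim C=\dim U=d\cdot\dim X$, as needed.

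For the ``in particular'' clause, suppose $Z$ is locally a complete intersection in $X$ (which holds automatically if $Z$ is a complete intersection in the ambient $P$, since $X$ is smooth). Then $\cI_{Z/X}/\cI_{Z/X}^2$ is locally free of rank $\codim_X Z=\dim X$ over $\cO_Z$, so $T_{\Hilb_{X,[Z]}}=\Hom(\cI_{Z/X}/\cI_{Z/X}^2,\cO_Z)\cong(\cO_Z)^{\dim X}$ has dimension exactly $(\deg Z)(\dim X)$; and $Z$ is smoothable in $X$ --- perturb the local defining regular sequences to generic ones, and glue, using that a zero-dimensional complete intersection is unobstructed. Hence both hypotheses of the corollary are met (indeed the tangent-space count together with $\dim_{[Z]}\Hilb_X\geq(\deg Z)(\dim X)$ already forces $[Z]$ to be a smooth point), and the conclusion follows from the first part.

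The step I expect to be the real obstacle is the component argument: a smooth point of a punctual Hilbert scheme can perfectly well lie on a component of the ``wrong'' dimension --- this is exactly Iarrobino's phenomenon of elementary components --- so the closure-of-the-reduced-locus hypothesis cannot be dropped, and the argument must genuinely exploit the irreducibility of that locus together with the fact that its general member is a smooth point of $\Hilb_X$ of the expected dimension.
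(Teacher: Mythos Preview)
Your proof is correct and follows essentially the same approach as the paper: both reduce, via Theorem \ref{deformation connection}, to the observation that the closure of the locus of reduced $\deg Z$-point subschemes has dimension $(\deg Z)(\dim X)$, so that a smooth point of $\Hilb_X$ lying in this closure has tangent space of exactly that dimension. The paper's proof is a one-line invocation of this fact, while you have unpacked the component argument and the ``in particular'' clause in detail; the content is the same.
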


\begin{proof}
The dimension of the closure of the locus of sets of $\deg Z$ reduced points has dimension
$(\deg Z)(\dim X)$.
\end{proof}

The case of complete intersections extends to that of schemes in the ``linkage class of a complete
intersection" described below.

\begin{remark} The tangent space to $Z$ in the Hilbert scheme of $X$ is often larger than
$\deg \cO_Z^{\dim X}$, for example when $[Z]$ is at a point where
the ``smoothing component'' meets another component---a result of
Iarrobino \cite{Iarrobino72} shows that there always are
such points when $\dim X >2$. But it can also be smaller.
The first such example was discovered by Iarrobino and Emsalem \cite{EI} 
(see
\cite{hilb} for an exposition): the Hilbert scheme of finite subschemes of
degree 8 in $\C^4$ contains a reduced component 
 whose generic point is the scheme defined by 7 general
quadrics. This component is isomorphic to the product of $\C^4$ and the Grassmanian $Gr(7, 10)$, which 
has dimension 25, whereas the locus of reduced 8-tuples of points has dimension
32. Such a point can appear as the intersection of a 4-plane $X$ with a scheme $Y$ of
codimension 7. In this case we have
$\dim_k Q(X,Y) = 31 >\deg Z(\codim Y-\dim X) = 24$.
\end{remark}

\subsection*{ Lower Bounds on the invariant $q$.} Since the computation
of $Q$ is local, it suffices to treat the local case. If $\cO$ is a regular
local ring, then ideals $J, J'\subset \cO$ (or the subschemes they define)
are said to be linked by a complete intersection $K$ if $J'=(K:J)$ and $J=(K:J')$,
where $(K:J)$ denotes the ideal $\{f\in \cO\mid fJ\subset K\}$.

The ideal $J$ is
\emph{in the linkage class of a complete intersection}, written \emph{licci},
 if it can be linked to a complete intersection in finitely many steps.
 See Peskine-Szpiro \cite{PS} for general information about this notion.
 
We write $\mu(Q)$ for the minimal number of generators of an $\cO$-module $Q$,
and it is obvious that $\length Q\geq \mu(Q)$.
Example \ref{quadric graph} suggests that the length of $Q(X,Y)$ can
be equal to $\mu(Q(X,Y))$ even in large cases.

\begin{theorem}\label{qlength}
Let $\cO$ be  an equicharacteristic 0 regular local ring of dimension $r$.
Suppose  $X\subset \Spec \cO$ is smooth and $Y\subset \Spec \cO$ is a complete intersection. Set $c=\codim Y-\dim X$ and $Z=X\cap Y$. Suppose that
$c \geq 1$, and that  $\dim Z= 0$.
\begin{enumerate}
\item \label{qlength 1} If $I_Z$ is licci then 
$$ 
q(X,Y) =  \deg (X\cap Y).
$$ 
\item  \label{qlength 2}If $I_Z$ is not licci, then 
$$
q(X,Y)\geq \frac{1}{c}\mu(Q(X,Y))\geq  \max(1+\frac{3}{c}, \frac{5}{c}).
$$
\end{enumerate}
In particular, $q(X,Y)\geq 1$.
\end{theorem}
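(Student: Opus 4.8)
The plan is to reduce both statements to local commutative algebra, exploiting the fact that $Q(X,Y)$ does not depend on $Y$ once $Y$ is a complete intersection containing $Z$ with the right codimension—this is the content of Proposition \ref{independence} cited in the introduction, so we may choose $Y$ conveniently and even pass to an étale or complete local situation. After completing, we may assume $\cO$ is a power series ring, $X=\Spec\cO/I_X$ with $I_X$ generated by part of a regular system of parameters, and $Y=\Spec\cO/I_Y$ with $I_Y$ a complete intersection with $I_Y\subset I_Z:=I_X+I_Y$. Writing $S=\cO/I_X$, a regular local ring of dimension $n=\dim X$, the module $Q(X,Y)$ becomes, by the defining formula, the cokernel of $\Hom_{S}(I_Z/I_Z^2,\cO_Z)\to \Hom_{\cO}(I_Y/I_Y^2,\cO_Z)\cong \cO_Z^{\,\codim Y}$; since $I_Z/I_Z^2$ has rank related to $n$ and $\codim Y$, a diagram chase identifies $Q(X,Y)$ with a cokernel built from the conormal module of $Z$ in $S$, and Theorem \ref{deformation connection} gives $\dim_k Q(X,Y)=(\deg Z)c - \dim_k T^1(Z)+\dim_k\Der\ \cO_Z$.

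For part \eqref{qlength 1}, I would argue that if $I_Z\subset S$ is licci then $[Z]$ is a smooth point of $\Hilb_S$ lying on the smoothing component. This is a known structural fact about licci zero-dimensional schemes: licci ideals are unobstructed (their deformation theory is controlled by linkage, and complete intersections are unobstructed, a property preserved under the linkage steps—see Peskine–Szpiro \cite{PS} and the standard unobstructedness results for licci ideals). Moreover a licci zero-dimensional scheme is smoothable, since it is linked to a complete intersection through smoothable schemes. Granting this, Corollary \ref{expected length} applies verbatim and yields $q(X,Y)=\deg(X\cap Y)$, which is $\geq 1$.

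For part \eqref{qlength 2}, the first inequality $q(X,Y)\geq \frac{1}{c}\mu(Q(X,Y))$ is immediate from $\length Q\geq \mu(Q)$ and the definition of $q$. The substance is the bound $\mu(Q(X,Y))\geq\max(5,c+3)$ when $I_Z$ is not licci. Here I would use the identification of $Q$ with an obstruction/excess-conormal module and translate non-licci-ness into a lower bound on the number of generators: a zero-dimensional scheme whose ideal needs few generators—or whose conormal module is close to free—is forced to be licci. Concretely, one expects to show that if $\mu(Q(X,Y))\leq 4$ (resp. $\leq c+2$) then the deviation of $I_Z$ from being a complete intersection is so small (embedding dimension $\leq 2$, or a small enough number of extra relations) that $I_Z$ is licci by the classical structure theory of almost complete intersections and Gorenstein ideals of small type (Buchsbaum–Eisenbud, Watanabe)—all of which are licci. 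This is the step Craig Huneke is credited with, and it is the main obstacle: one must extract from the bound on $\mu(Q)$ a genuinely effective constraint on the minimal free resolution of $\cO_Z$ over $S$, then invoke the (characteristic-zero) classification results guaranteeing licci-ness in that constrained range. The final clause $q(X,Y)\geq 1$ then follows since in both cases the displayed bounds are $\geq 1$ (using $\deg(X\cap Y)\geq 1$ in case \eqref{qlength 1}).
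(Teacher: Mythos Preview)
Your outline for part~\eqref{qlength 1} is essentially the paper's argument: the paper cites Buchweitz for unobstructedness of licci schemes and Ulrich for smoothability, then invokes Corollary~\ref{expected length} (equivalently Theorem~\ref{deformation connection}). So that half is fine.

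For part~\eqref{qlength 2} you correctly identify the target---show that $\mu(Q(X,Y))\leq 4$ or $\mu(Q(X,Y))\leq c+2$ forces $I_Z$ to be licci---but you explicitly flag the bridge from a bound on $\mu(Q)$ to a structural constraint on $I_Z$ as ``the main obstacle'' and do not supply it. That bridge is the actual content of the proof, and it is not a free-resolution classification argument of the kind you gesture at. The paper's mechanism is this: write $J=(I_X+I_Y)/I_X\subset\cO_X$. Since $\Hom(I_Y/I_Y^2,\cO_Z)\cong\cO_Z^{\,r-n}$ is free (here $r-n=\codim Y$), the kernel $\Hom(J/J^2,\cO_Z)$ of the defining surjection onto $Q$ must contain a free $\cO_Z$-summand of rank $m:=r-n-\mu$. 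Because $\cO_Z$ is Artinian, this forces $J/J^2$ itself to have a free $\cO_Z$-summand of rank $m$. Now Vasconcelos's theorem \cite{vas} (this is the step you are missing) promotes that free summand to an honest regular sequence $f_1,\dots,f_m\in J$ with $J/(f_1,\dots,f_m)$ of finite projective dimension in $\cO_X/(f_1,\dots,f_m)$. After this reduction one computes: if $\mu\leq c+2$ then $\dim\cO_X/(f_1,\dots,f_m)=\mu-c\leq 2$, and if $\mu\leq 4$ then $\mu\bigl(J/(f_1,\dots,f_m)\bigr)\leq 4$; in either case Proposition~\ref{licci lemma} gives licci-ness, hence the contrapositive yields $\mu\geq\max(5,c+3)$.

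So the missing idea is not classification of short free resolutions but rather the passage \emph{free summand in the conormal module} $\Rightarrow$ \emph{regular sequence in $J$} via Vasconcelos, which is what converts the numerical bound on $\mu(Q)$ into a genuine reduction of the ambient dimension or the number of generators.
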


Here are some facts about the licci property that will be important to us:
 
 \begin{proposition}\label{licci lemma} Suppose that $A$ is a local Gorenstein ring
 with maximal ideal $\gm$, and $L\subset A$
 is an ideal of finite projective dimension such that $A/L$ is Cohen-Macaulay.
 \begin{enumerate}
\item \label{licci lemma 1} If $f_1,\dots f_m\in L$ is a regular sequence, and 
 $L/(f_1,\dots, f_m)$ is licci, then $L$ is licci.
\item\label{licci lemma 2} If $\codim L\leq 2$ then $L$ is licci.
\item\label{licci lemma 3} If $\mu(L)\leq 4$ then $L$ is licci.
 \item\label{licci lemma 4} Suppose $A$ is regular, and $A/L$ is of finite length.
Let  $T:=\gm/(\gm^2+L)$ be the Zariski tangent space  of $A/L$. 
If $\dim T \leq 2$, or if $L$ is generated by $\codim L+1$ elements and 
 $\dim T\leq 3$,  then $L$ is licci.
 \end{enumerate}
 \end{proposition}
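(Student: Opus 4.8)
The plan is to prove the four parts in the stated order, deriving part (4) formally from parts (1)--(3). Part (1) I would prove directly, by lifting a chain of links from $\bar A := A/(f_1,\dots,f_m)$ to $A$. Write $\bar L := L/(f_1,\dots,f_m)$. Because $A/L$ is Cohen--Macaulay, $L$ has finite projective dimension, and $f_1,\dots,f_m$ is a regular sequence in $L$, the ideal $\bar L\subset\bar A$ again satisfies the hypotheses of the proposition: $\bar A/\bar L$ is Cohen--Macaulay, $\bar L$ has finite projective dimension, and $\bar A$ is local Gorenstein. Now suppose $\bar L$ is linked to $\bar L'$ by a complete intersection $\bar K = (\bar g_1,\dots,\bar g_t)\subset\bar L$ with $t = \codim\bar L$. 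I would lift the $\bar g_i$ to elements $g_i\in L$ and set $K := (f_1,\dots,f_m,g_1,\dots,g_t)\subset L$; this ideal is generated by $m+t = \codim L$ elements, which form a regular sequence since $A/K$ is Cohen--Macaulay, so $K$ is a complete intersection contained in $L$ and $L' := (K:_A L)$ is linked to $L$ in $A$. The one step needing care is that reduction modulo $(f_1,\dots,f_m)$ commutes with forming the colon, i.e.\ $L'/(f_1,\dots,f_m) = (\bar K:_{\bar A}\bar L) = \bar L'$; this holds because $(f_1,\dots,f_m)$ is a regular sequence contained in $K$ and the quotients $A/K$, $A/L$, $A/L'$ are Cohen--Macaulay of the same dimension. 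Iterating along a chain of links realizing that $\bar L$ is licci --- and noting that if $\bar L'$ is a complete intersection in $\bar A$ then the same generator count makes $L'$ a complete intersection in $A$ --- shows $L$ is licci.

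For parts (2) and (3) I would appeal to the structure theory of perfect ideals of small codimension. Codimension $0$ and $1$ are immediate: by Auslander--Buchsbaum such an ideal has projective dimension $0$ or $1$, forcing it to be $(0)$ or principal, hence a complete intersection. Codimension $2$ is the classical theorem of Gaeta (see Peskine--Szpiro \cite{PS}) that every perfect codimension-$2$ ideal is licci, proved via the Hilbert--Burch structure theorem and a sequence of links. For (3), part (2) reduces us to $\codim L\ge 3$, so $\mu(L)\le 4\le\codim L + 1$; if $\mu(L)=\codim L$ the minimal generators form a regular sequence (as $A$ is Cohen--Macaulay) and $L$ is a complete intersection, leaving only the case of a $4$-generated almost complete intersection of codimension $3$. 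There I would link $L$ by a complete intersection $K$ spanned by three general elements of $L$, so that $L/K$ is a cyclic $A/K$-module; then $L' := (K:_A L)$ is a Gorenstein ideal of codimension $3$ (the link of an almost complete intersection is Gorenstein, by Buchsbaum--Eisenbud), hence generated by an odd number of elements, and a Betti-number count through the linkage --- together with that oddness --- shows $L'$ has at most $5$ generators. So $L'$ is a complete intersection or a $5$-generated Gorenstein ideal of codimension $3$, and the latter is known to be licci.

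Part (4) then follows by cutting $L$ down with a partial regular system of parameters. Set $e := \dim T$ and choose $a_1,\dots,a_{r-e}\in L$ whose residues in $\mathfrak m/\mathfrak m^2$ are linearly independent; they are part of a regular system of parameters of $A$ and also part of a minimal generating set of $L$, so $\bar A := A/(a_1,\dots,a_{r-e})$ is regular of dimension $e$ and $\bar L := L/(a_1,\dots,a_{r-e})$ is a finite-length ideal with $\mu(\bar L) = \mu(L) - (r-e)$ and $\codim\bar L = e$. By part (1), $L$ is licci if and only if $\bar L$ is. If $\dim T\le 2$ then $\bar L$ has codimension $\le 2$ and is licci by (2). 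If instead $\mu(L) = \codim L + 1$ and $\dim T\le 3$, then $\mu(\bar L) = \codim\bar L + 1$ with $\codim\bar L = e\le 3$, so $\bar L$ is a complete intersection or a $4$-generated codimension-$3$ almost complete intersection, hence licci by (2) or by the argument of (3).

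The main obstacle is the external input behind (2) and (3): that perfect codimension-$2$ ideals, and $4$-generated perfect codimension-$3$ ideals (equivalently, via the linkage above, $5$-generated Gorenstein codimension-$3$ ideals), are licci. I would quote these structure results rather than reprove them; everything else is formal, the one delicate point being the compatibility of colon ideals with reduction modulo a regular sequence contained in $L$, used in (1).
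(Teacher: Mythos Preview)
Your treatment of parts (1), (2), and (4) is correct and matches the paper's; your argument for (1) is more explicit than the paper's one-line ``immediate from the definition'', but the content is the same, and the point you flag---that forming colon ideals commutes with reduction modulo a regular sequence contained in the complete intersection---is exactly what needs checking.

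The gap is in part (3). Your reduction to a 4-generator perfect ideal $L$ of codimension $3$, and the link to a codimension-$3$ Gorenstein ideal $L'$, match the paper. But the claim that ``a Betti-number count through the linkage, together with oddness, shows $L'$ has at most $5$ generators'' is not justified. The mapping-cone construction bounds $\mu(L')$ only by $\mathrm{type}(A/L)+3$, and the type of a $4$-generated codimension-$3$ perfect ideal need not be $\le 2$. For instance, take the Pfaffian ideal $P$ of a generic $7\times 7$ skew-symmetric matrix ($\mu(P)=7$, Gorenstein, codimension $3$) and link by three general elements $K\subset P$ to get $L=K:P$; then $\mu(L)=4$ while $\mathrm{type}(A/L)=\mu(P/K)=4$, and linking $L$ back by this same $K$ returns $P$ with $7$ generators. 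Your phrase ``three general elements of $L$'' leaves open whether a \emph{generic} link might satisfy $\mu(L')\le 5$, but no straightforward Betti-number count establishes this, and you give no other argument.

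The paper follows the same route but does not attempt to bound $\mu(L')$. Instead it invokes the stronger input: by Peskine--Szpiro the link $L'$ has a symmetric resolution, so by Buchsbaum--Eisenbud \cite{Gor} it is generated by the sub-maximal Pfaffians of a skew-symmetric matrix of odd size, and \emph{every} such ideal---regardless of the number of generators---is licci (a Gaeta-type argument; see also Watanabe \cite{Watanabe}). Replacing your unproved generator bound with this citation repairs the proof.
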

 \begin{proof}[Proof Sketch]
The first assertion is immediate from the definition.
The second was proved by Ap\'ery and Gaeta \cite{Gaeta}, 
 and is reproved in modern language in Peskine-Szpiro \cite{PS}.
 
 When $L$ is generated by $\leq 2$ elements then it has codimension
 $\leq 2$ and is thus covered by the Theorem of Ap\'ery and Gaeta. If $\mu(L) = \codim L$,
 then $L$ is itself a complete intersection. The only remaining case
 with $\mu(L)\leq 4$ is the case of a 4-generator ideal of codimension 3.
 It follows from the paper of Peskine-Szpiro that $L$ is then linked to
 an ideal $L'$ of finite projective dimension that has a symmetric resolution,
 and the main theorem of Buchsbaum-Eisenbud \cite{Gor} shows
 that $L'$ is generated by the $2n\times 2n$ pfaffians of a $2n+1\times 2n+1$
 matrix. Ideas
 similar to those of Gaeta show that such ideals are licci; see also
 Watanabe \cite{Watanabe} that implicitly contains
 a different (and prior) proof of
  the slightly restricted case where $A$ is regular. 
 This proves part (3).
 
 Part (4) follows from the previous parts: if $\dim T\leq 2$, then
 there is a regular sequence $g_1,\dots g_s$ in $L$, with
 $s=\dim A -\dim T$, such that
 $A/(g_2,\dots, g_s)$ is again regular. If $\dim T=2$ we may apply
parts (1) and (2) to conclude that $L$ is licci. If $\dim T= 3$
then $\mu(L/(g_1,\dots,g_s))= \mu(L) - s= \codim L+1 - s =
\dim A+1-(\dim A-3) = 4$ so we may apply parts (1) and (3)
to conclude that $L$ is licci.
 \end{proof}

\begin{proof}[Proof of Theorem \ref{qlength}]
First suppose that $I_Z$ is licci. We may harmlessly complete the ring $\cO$
and thus can apply the result of
Buchweitz \cite{buchweitz}, Theorem 6.4.4 (p.~235), which shows that a licci scheme
represents a smooth point on its Hilbert scheme
(this result is proven in the analytic category). 
Ulrich \cite{ulrich} Theorem 2.1 implies that a licci scheme of
dimension at most 3 is smoothable. Part (1)
thus follows from Theorem \ref{deformation connection}.

To prove part (2), set
$\mu = \mu(Q(X,Y))$ and $n = \dim Y$. Let $J=(I_X+I_Y)/I_X$ be the image of $I_Z$ in 
$\cO_X=\cO/I_X$, and consider the  
defining exact sequence 
$$ 
0 \to \Hom(J/J^2, \cO_Z) \to \Hom(I_Y/I_Y^2, \cO_Z) \to Q(X,Y)  \to 0.
$$
Since 
$\Hom (I_Y/I_Y^2, \cO_Z)$ is a free $\cO_Z$-module of rank $r-n$, 
$\Hom (J/J^2, \cO_Z)$ 
must have an $\cO_Z$-free summand of rank $m:=r-n-\mu$. Since $\cO_Z$ is artinian, this implies that 
$J/J^2$ has also an $\cO_Z$-free summand of rank $m$. 

 We 
may write
this free summand in the form $J/K$ for some ideal $K$ 
of $\cO_X$ such that $J\supset K\supset J^2$. 
Since $\cO_X$ is regular, $J$ has finite projective dimension. The proof of 
Theorem 1.1 of Vasconcelos \cite{vas} shows that there is a regular sequence 
$f_1, \dots, f_{m}$ in $J$ such that 
 $$
 J = (f_1, \dots, f_{m}) + K
 $$
 and such that $J/(f_1, \dots, f_{m})$ is an ideal of 
 finite projective dimension in the ring  $\cO_X/(f_1, \dots, f_{m})$. 
 By Proposition \ref{licci lemma}.\ref{licci lemma 1} it suffices to show that
 when $\mu\leq c+2$ or $\mu \leq 4$ the ideal $J/(f_1,\dots f_m)$ is
 licci.
 
 If $\mu \leq c+2$ then 
 $$
 \dim \cO_X/(f_1,\dots, f_m) = r-n-c-m = (r-n-c) - (r-n-\mu)=\mu-c\leq 2
 $$
 so  Proposition \ref{licci lemma}.\ref{licci lemma 2} shows that $J$ is licci.
 On the other hand, if $\mu\leq 4$, then 
 $$
 \mu(J/(f_1,\dots,f_m)) = \mu(J)-m\leq \mu(I_Y) - m = r-n-m = (r-n)-(r-n-\mu) \leq 4
 $$
 so  Proposition \ref{licci lemma}.\ref{licci lemma 2} shows that $J$ is licci.
\end{proof}

If $\pi: X\to P$ is a map of smooth varieties, and $x\in X$ is a point, then the tangential corank of $\pi$ at $x$ is the 
dimension of the Zariski tangent space of the fiber to $\pi$ through $x$. Theorem \ref{qlength}
allows us to analyze the invariant $q$ at points of small tangential corank.

\begin{corollary}\label{corank2} 
Let $x\in X\subset \PP^r$ be a point on a smooth projective variety of dimension $n$, and let
$\pi:X \to \PP^{n+c}$ be a linear projection from a center that does not meet $X$, so that
the fiber $Z$ of $\pi$ through $x$ is the intersection of $X$ with a linear subspace $\Lambda$
of dimension $r-n-c$.
If the tangential corank of $\pi$ at $x$ is $\leq 2$, or
$c=1$ and the tangential corank of $\pi$ at $x$ is $\leq 3$,
then $q(X,\Lambda) = \deg (X\cap \Lambda)$.
\end{corollary}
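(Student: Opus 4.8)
The plan is to reduce everything to a local statement at $x$ and then apply Theorem~\ref{qlength}.\ref{qlength 1}; the only real content is to verify its hypothesis, namely that the local ideal of the fiber is licci. Since $Q(X,\Lambda)$ is a coherent sheaf supported on the finite scheme $Z=X\cap\Lambda$ and its formation commutes with localization, I would work at the point $x$, setting $A=\cO_{X,x}$ (a regular local ring of dimension $n$) and letting $L\subset A$ be the ideal of the germ of the fiber there, so that $A/L=\cO_{Z,x}$ has finite length. Then Theorem~\ref{qlength}.\ref{qlength 1}, applied with ambient local ring $\cO_{\PP^r,x}$, smooth subscheme $X$, and the complete intersection $Y=\Lambda$ (for which $\codim\Lambda-\dim X=c\geq 1$), says precisely that if $L$ is licci then $q(X,\Lambda)=\deg(X\cap\Lambda)$. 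So the entire proof reduces to showing that $L$ is licci.

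To that end I would first record two elementary facts. The Zariski tangent space $T:=\gm_A/(\gm_A^2+L)$ of $Z$ at $x$ has $\dim_k T$ equal to the tangential corank of $\pi$ at $x$: this is essentially the definition, since $Z$ is the scheme $\pi^{-1}(\pi(x))$ and its Zariski tangent space at $x$ is $\ker(d\pi_x)$. And since $\Lambda$ is cut out near $x$ by $n+c$ linear forms of $\PP^r$, the images of these forms generate $L$, so $\mu(L)\leq n+c$; combined with $n=\codim L\leq\mu(L)$ (Krull's height theorem) this gives $\mu(L)\in\{n,\dots,n+c\}$. Now I would invoke Proposition~\ref{licci lemma}.\ref{licci lemma 4}, whose hypotheses ($A$ regular, $A/L$ of finite length) hold: if the tangential corank is $\leq 2$, then $\dim_k T\leq 2$ and its first clause gives that $L$ is licci; if instead $c=1$ and the tangential corank is $\leq 3$, then $\mu(L)\leq n+1$, and either $\mu(L)=n=\codim L$, so $Z$ is a complete intersection and the equality follows already from Corollary~\ref{expected length}, or $\mu(L)=n+1=\codim L+1$, so the bound $\dim_k T\leq 3$ lets the second clause of Proposition~\ref{licci lemma}.\ref{licci lemma 4} apply. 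Either way $L$ is licci and we are done.

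The step I would treat with the most care is the identification of the intrinsic licci property of the Artinian scheme $Z$ (the form in which Theorem~\ref{qlength} really uses it, via the smoothness and smoothability results of Buchweitz and Ulrich) with licci-ness of the ideal $L$ inside the particular regular local ring $A=\cO_{X,x}$ chosen above: these coincide because the $r-n$ forms cutting the smooth $X$ out of $\PP^r$ form a regular sequence lying in $\cI_{Z/\PP^r,x}$ whose quotient ideal is $L$, so Proposition~\ref{licci lemma}.\ref{licci lemma 1} transports licci-ness from $\cO_{X,x}$ up to $\cO_{\PP^r,x}$, which is the setting of Theorem~\ref{qlength}. I do not expect any further obstacle; the remaining steps are bookkeeping with codimensions and numbers of generators.
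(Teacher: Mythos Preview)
Your proposal is correct and follows the same route as the paper: show via Proposition~\ref{licci lemma}.\ref{licci lemma 4} that the fiber is licci at $x$, then invoke Theorem~\ref{qlength}.\ref{qlength 1}. Your added care---the explicit identification of $\dim_k T$ with the tangential corank, the bound $\mu(L)\leq n+c$, the case split on $\mu(L)$, and the remark about transporting the licci property between $\cO_{X,x}$ and $\cO_{\PP^r,x}$ via Proposition~\ref{licci lemma}.\ref{licci lemma 1}---fleshes out what the paper leaves implicit in its two-line proof.
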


\begin{proof} From Proposition \ref{licci lemma}.\ref{licci lemma 4} we see that the fiber
is licci at $x$, and the assertion then follows from Theorem \ref{qlength}.
\end{proof}

\begin{example} \label{quadric graph}
Let $A=k[a_1,\dots,a_n, x_1,\dots,x_{n+1}]$ and let 
$$
I=(x_1-f_1(a), \dots, x_{n+1}-f_{n+1}(a)); \quad X= V(I)\subset \Spec A
$$
 where the $f_i(a)$ are generic
quadrics in the variables $a_i$. Let $\Lambda=V(x_1,\dots,x_{n+1})$. The 
variety $X$ is smooth (it is the graph of the map $f := (f_i)$) and meets the plane
$\Lambda$, its tangent plane, in a scheme $Z$ supported at the origin. 

For $2\leq n\leq 8$ and random examples over a large
finite field, the values of $\deg (Z)$, $q(\Lambda,X)$,
which is equal to $q(X, \Lambda)$ by Theorem \ref{di}, and 
the minimal number of generators of the module $Q(\Lambda,X)$, 
written $\mu(Q(\Lambda,X))$,
are given in the following table, computed with
Macaulay2 \cite{M2}. Note that for even $n$ we have
$
\mu(Q(\Lambda,X)) = q(\Lambda,X) = \codim I.
$
It follows immediately that in these cases
 $$Q(\Lambda,X)=I/(a_1,\dots,a_n, x_1,\dots, x_{n+1})I,
 $$
 a vector space concentrated in degree 2.
 \medbreak
 
\centerline{ 
\begin{tabular}{l*{6}{c}r}
$n$ & $\deg Z$ & $q(\Lambda,X)$&$\mu(Q(\Lambda,X))$\\
\hline
2 & 3 & 3&3 \\
3 & 6 & 6&3 \\
4 & 10 & 5&5\\
5 & 20 & 20&6\\
6 & 35 & 7&7\\
7 & 70 & 57&8\\
8 & 126 & 9 &9
\end{tabular}
}
\medskip

In the notation
of Theorem \ref{qlength} we have $c=1$. In these examples
we have $\deg(\Lambda\cap X) = q(\Lambda,X)$ if and only if $n=2,3$,
and these are exactly the cases where $\Lambda\cap X$ is licci. 

The case $n=4$ shows that the bound $q(\Lambda,X)\geq 5$
 in part (2) of Theorem \ref{qlength} can be sharp.
For an example where the other option is sharp consider the smallest
non-licci scheme, which is $Z=\Spec k[x,y,z]/(x,y,z)^2$. This scheme can be written
as the intersection of a smooth complete intersection $X$ of 6 general quadrics in 
some $\PP^r \ (r\geq 7)$ with
a 3-plane $\Lambda$, and thus $c=3$. It is easy to compute the dimension of the tangent space to the Hilbert scheme
of $\Lambda$ at $[Z]$ has dimension 18, so by Theorem \ref{deformation connection}---or direct computation---
$$
q(\Lambda,X) = \mu(Q(\Lambda,X))=6=c+3.
$$
\end{example}
\medbreak

\section{secant lines}

Let $X \subset \P^r$ be a smooth subvariety of dimension $n$, and denote by 
$S_l$, $l\geq 2$, the subvariety of  $\P^r$ swept out by all the $l$-secant lines of $X$. Let 
$c = \lfloor\frac{n}{l-1} \rfloor +1$, and 
let  $\Sigma$ be a linear subspace of codimension $n+c+1$ in $\P^r$. If 
$S_l$ intersects $\Sigma$ at a point $q$, then there is an $l$-secant line $\Lambda_1$
 of $X$ which 
passes through $q$, and this line together with $\Sigma$ span a linear subspace $\Lambda$ of codimension 
$n+c$ in $\P^r$ that intersects $X$ in a scheme of degree $\geq l > \frac{n+c}{c}$. 
If we knew that $q(X,\Lambda)$ was bounded below by
$\deg(X\cap \Lambda_1)$, then Theorem \ref{main1} 
would show that a general such $\Sigma$ does not intersect $S_l$, and so 
$\dim S_l \leq n+c$. Though we do not know how to prove such a comparison
theorem for $q(X,\Lambda)$, 
Theorem \ref{main3} shows that this upper bound 
on the dimension of $S_l$ is satisfied. The examples in 
Section \ref{examples section} show that it is sharp. 

\begin{proof}[Proof of Theorem \ref{main3}]
Let $G_{l}$ be the subvariety of the Grassmannian of lines in $\P^r$ parametrizing 
the lines whose intersection with $X$ has degree $\geq l$, and 
let $G^0$ be an irreducible component of $G_{l}$. We must show that the lines 
parametrized by $G^0$ sweep out a subvariety of dimension $\leq n+c$ in $\P^r$. 
 
Let $[\Lambda]$ be a general point in $G^0$. We may assume
$\Lambda\not\subset X$, since otherwise this conclusion is obvious. Set
$Z=X\cap\Lambda$, and let $F=\ker( N_{\Lambda/\PP^r} \to Q(X,\Lambda))$. Since $F$ is a torsion free sheaf of rank $r-1$ on $\P^1$, it splits as 
$F = \O_{\Lambda}(a_1) \oplus 
\dots \oplus \O_{\Lambda}(a_{r-1}), 
a_1 \geq a_2\geq \dots \geq a_{r-1}.$
We see from the definition that the Euler characteristic of $F$ is
$r-1+\sum_i a_i = 2(r-1) - \length(Q(X,\Lambda))$.
Since any subscheme of $\P^1$ is a local complete intersection, 
part (1) of 
Theorem \ref{qlength} yields
\begin{equation}\label{ineq1}
\sum_i a_i =r-1 - \length(Q(X,\Lambda)) = r-1-(r-n-1)l.
\end{equation}
We next show that $a_i \geq  -l+1$ for every $i$ by showing
that $H^1(F(l-2))=0$. 

Consider the short exact sequence 
$$
0 \to F(l-2) \to N_{\Lambda/\P^r}(l-2) \to Q(X, \Lambda) \to 0.
$$
Since $H^1(N_{\Lambda/\P^r}(l-2))=0$, to prove $H^1(F(l-2))=0$ 
 we need to show that the map  
$H^0(N_{\Lambda/\P^r}(l-2)) \to H^0(Q)$ is surjective.
This map factors through 
$$
H^0(N_{\Lambda/\P^r}(l-2)) \to H^0(N_{\Lambda/\P^r}(l-2)|_Z) \to H^0(Q). 
$$
The first map is surjective since the Castelnuovo-Mumford regularity of a finite scheme 
is bounded by the degree of the scheme, and the second map is surjective since 
$ N_{\Lambda/\P^r}|_Z \to Q(X, \Lambda)$ is surjective and $Z$ is zero-dimensional. This shows that $a_i \geq -l+1$ for every $i$.

Let now $k$ be the largest index such that $a_k \geq 0$, then 
\begin{equation}\label{ineq2}
  \sum_i a_i \geq (r-1-k)(-l+1).
\end{equation}
Combining (\ref{ineq1}) and 
(\ref{ineq2}), we get 
$ r-1-(r-n-1)l \geq (r-1-k)(-l+1)$, and so $k \leq \frac{nl}{l-1}$.

Let $\mathcal I \subset \P^r \times G^0$ be the incidence 
correspondence, and denote by $p_1$ and $p_2$ the two projections from 
$\mathcal I$ to $\P^r$ and $G^0$. We get a commutative diagram 
$$
\xymatrix{
  T_{\mathcal I, ([\Lambda], p)}
  \ar[r]^-{dp_2} \ar[d]^{dp_1} & 
  \;\;\;\; T_{G^0, [\Lambda]} = H^0(F) \ar[d] \\
  T_{\P^r, p}  \ar[r] & N_{\Lambda/\P^r}|_p = F|_p.
}
$$
 If $([\Lambda], p)$ is a general point of 
$\mathcal I$, then since $k \leq \frac{nl}{l-1}$, the image of the restriction map $H^0(F) \to 
F|_p$ has dimension at most $\frac{nl}{l-1}$. Therefore,  the dimension of the image of $dp_1$, which is equal to the dimension
of the subvariety swept out by all the lines parametrized by $G^0$, is at most 
$\frac{nl}{l-1}+1$.  
\end{proof}

A similar argument proves an analogous result 
for the $2$-dimensional linear subvarieties intersecting $X$ in a scheme of dimension zero:

\begin{theorem}\label{plane}
If $S_{l,t}$ is the closure of the subvariety of $\P^r$ swept out by the $2$-planes $\Lambda$ such that the 
intersection of $\Lambda$ and $X$ is a scheme of degree at least $l$ and regularity at most $t$, 
then 
  $$\dim S_{l,t} \leq \frac{  {{t+1}\choose{2}} (r-2) -l(r-2-n)} {  {{t}\choose{2}}  } + 2.$$
\end{theorem}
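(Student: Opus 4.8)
The plan is to run the argument that proves Theorem~\ref{main3} with $\P^1$ replaced by $\P^2$. Fix an irreducible component $G^0$ of the locally closed subvariety of the Grassmannian of $2$-planes in $\P^r$ whose points $[\Lambda]$ satisfy: $X\cap\Lambda$ is finite, $\deg(X\cap\Lambda)\ge l$, and $\reg(X\cap\Lambda)\le t$; since $S_{l,t}$ is the closure of a finite union of the loci swept out by such components, it suffices to bound each separately, and we may assume $r\ge n+3$ (the asserted bound being vacuous or trivial if $r\le n+2$, or if $\binom{t}{2}=0$). Let $[\Lambda]\in G^0$ be general, so $\Lambda\not\subset X$ and $Z:=X\cap\Lambda$ is finite of degree $d\ge l$, and put $F=\ker\bigl(N_{\Lambda/\P^r}\to Q(X,\Lambda)\bigr)$, a torsion-free sheaf of rank $r-2$ on $\Lambda\cong\P^2$ that coincides with $N_{\Lambda/\P^r}=\O_{\P^2}(1)^{r-2}$ away from the finite set $Z$. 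Because $Z$ is a finite subscheme of a smooth surface it has embedding dimension $\le 2$, hence is licci (Proposition~\ref{licci lemma}.\ref{licci lemma 4}; cf.\ Corollary~\ref{corank2}), so part~(1) of Theorem~\ref{qlength} gives $q(X,\Lambda)=\deg Z$ and therefore $\length Q(X,\Lambda)=(r-2-n)\,d$. Twisting the defining sequence of $F$ by $\O_{\P^2}(t-2)$ and using that $Q(X,\Lambda)$ is supported on $Z$ we obtain
\[
\chi\bigl(F(t-2)\bigr)=\binom{t+1}{2}(r-2)-(r-2-n)\,d\ \le\ \binom{t+1}{2}(r-2)-l(r-2-n),
\]
the last step because $r-2-n\ge 1$ and $d\ge l$. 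Moreover $H^1(F(t-2))=H^2(F(t-2))=0$: the vanishing of $H^2$ holds because $H^2(\O_{\P^2}(t-1))=0$ and $Q(X,\Lambda)$ has zero-dimensional support, while the vanishing of $H^1$ reduces, via the same sequence and $H^1(\O_{\P^2}(t-1))=0$, to surjectivity of $H^0(\O_{\P^2}(t-1)^{r-2})\to H^0(Q(X,\Lambda))$, which follows from $\reg Z\le t$ (making $H^0(\O_{\P^2}(t-1)^{r-2})\to H^0(\O_{\P^2}(t-1)^{r-2}|_Z)$ onto) together with exactness of global sections on the zero-dimensional $Z$. Hence $h^0(F(t-2))=\chi(F(t-2))$ is bounded as above.

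The genuinely new ingredient, where the $\P^1$ proof does not carry over, is that $F$ need not split as a direct sum of line bundles, so one cannot simply count its nonnegative splitting indices. Instead, take a general point $p\in\Lambda$, so that $p\notin Z$, $F$ is locally free at $p$, and $F|_p=N_{\Lambda/\P^r}|_p$; let $\rho$ be the dimension of the image of the evaluation map $H^0(F)\to F|_p$, equivalently the generic rank of the image of $H^0(F)\otimes\O_{\P^2}\to F$. Choose $s_1,\dots,s_\rho\in H^0(F)$ whose values at $p$ are linearly independent; then $s_1,\dots,s_\rho$ are linearly independent at the general point of $\Lambda$, so the induced map $\O_{\P^2}^{\,\rho}\to F$ is injective as a morphism of sheaves. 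Twisting by $\O_{\P^2}(t-2)$ and applying the left-exact functor $H^0$, we get an injection $H^0(\O_{\P^2}(t-2))^{\rho}\hookrightarrow H^0(F(t-2))$, so that
\[
\rho\binom{t}{2}\ \le\ h^0\bigl(F(t-2)\bigr)\ \le\ \binom{t+1}{2}(r-2)-l(r-2-n),
\]
that is, $\rho\le\bigl(\binom{t+1}{2}(r-2)-l(r-2-n)\bigr)\big/\binom{t}{2}$.

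It remains to repeat the incidence-correspondence argument from the proof of Theorem~\ref{main3}. Let $\mathcal I\subset\P^r\times G^0$ be the incidence variety with projections $p_1$ and $p_2$; the locus swept out by $G^0$ is $\overline{p_1(\mathcal I)}$, whose dimension equals the rank of $dp_1$ at a general point $([\Lambda],p)$ of $\mathcal I$. In the commutative square relating $dp_1$, $dp_2$, the evaluation $H^0(F)\to F|_p$, and the quotient $T_{\P^r,p}\to N_{\Lambda/\P^r}|_p=F|_p$ (whose kernel $T_{\Lambda,p}$ has dimension $2$), the image of $dp_1$ is carried into the image of $T_{G^0,[\Lambda]}$ under evaluation. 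Here one needs $T_{G^0,[\Lambda]}\subseteq H^0(F)$: since $\deg(X\cap\Lambda')$ is constant on a dense open subset of $G^0$, the family $\{X\cap\Lambda'\}$ is flat there, so every tangent direction to $G^0$ at $[\Lambda]$ induces a flat deformation of $Z$ — which is precisely the defining property of the sections of $F$. Since the image of $T_{G^0,[\Lambda]}$ under evaluation has dimension $\le\rho$, the image of $dp_1$ is contained in a subspace of dimension at most $\rho+2$, so $\dim\overline{p_1(\mathcal I)}\le\rho+2$; maximizing over the components $G^0$ gives
\[
\dim S_{l,t}\ \le\ \rho+2\ \le\ \frac{\binom{t+1}{2}(r-2)-l(r-2-n)}{\binom{t}{2}}+2 .
\]

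The step I expect to require the most care is exactly the one that departs from Theorem~\ref{main3}: obtaining a useful upper bound on $\rho$ without a splitting of $F$, i.e.\ the observation that $\rho$ generically linearly independent sections of $F$ force $\rho\,h^0(\O_{\P^2}(t-2))\le h^0(F(t-2))$. The remaining ingredients — the flatness of $\{X\cap\Lambda'\}$ over the smooth locus of $G^0$ (hence $T_{G^0,[\Lambda]}\subseteq H^0(F)$), the licci computation of $\length Q(X,\Lambda)$ through Corollary~\ref{corank2} and Theorem~\ref{qlength}, and the two cohomology vanishings — proceed as in, or in close parallel with, the proof of Theorem~\ref{main3}.
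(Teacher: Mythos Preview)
Your proof is correct and takes essentially the same approach as the paper's: both compute $h^0(F(t-2))$ via the regularity bound, bound the dimension $\rho$ of the image of $H^0(F)\to F|_p$ by observing that $\rho$ generically independent sections give a sheaf injection $\O_{\P^2}^{\,\rho}\hookrightarrow F$ and hence $\rho\binom{t}{2}\le h^0(F(t-2))$ (you spell this out; the paper states the inequality without comment), and finish with the incidence correspondence. You are slightly more careful with the inequality $d\ge l$ where the paper writes an equality, but the structure is identical.
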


\begin{proof}
Let $G^0$ be an irreducible component of the 
space of $2$-planes in $\P^r$ which intersects $X$ in a scheme of degree at least $l$ and regularity at most $t$, and 
let $[\Lambda]$ be a general point in $G^0$. Consider the exact sequence 
$$ 
0 \to F \to N_{\Lambda/\P^r} \to Q(X,\Lambda) \to 0.
$$
Since $Z = \Lambda \cap X$ is $t$-regular, an argument parallel to the one given
in the proof of Theorem \ref{main3} shows that the map $H^0(N_{\Lambda/\P^r}(t-2)) \to H^0(Q(X,\Lambda))$ is surjective.
Thus,
$$
h^0(F(t-2)) = {{t+1}\choose{2}} (r-2) -l(r-2-n).
$$
On the other hand, if the planes parametrized by $G^0$ cover a subvareity of dimension at least $k$, then for a general point $p \in \Lambda$, the image of the restriction map 
$H^0(F) \to F|_p$ is at least $(k-2)$-dimensional, so 
$$
h^0(F(t-2)) \geq H^0(\O(t-2)^{k-2}) = {{t}\choose{2}} (k -2),
$$
 and we get the desired bound.
\end{proof}

\section{Examples}\label{examples section}
Here are
examples showing that the bounds in Theorem \ref{main3} are sharp,
and that Conjecture  \ref{fiber-reg} has the best possible bound on the regularity of the fibers.

\begin{example} \label{final examples}\label{codim 2 complete intersections}

Let $r = \lfloor \frac{nl}{l-1}+1 \rfloor$, 
and let $X$ be the complete intersection of 
$r-n$ general hypersurfaces $Y_1, \dots, Y_{r-n}$ of degree $l$ in 
$\P^r$. We show that $\P^r$ is swept out by the $l$-secant lines of 
$X$ and so 
$$ \dim S_{l} = r = \lfloor \frac{nl}{l-1}+1 \rfloor .$$
Consider the intersection of any $r-n-1$ of the $Y_i$, say 
$Y = Y_1 \cap \dots \cap Y_{r-n-1}$. Under these circumstances,
it is known that $Y$ is covered by
lines; we give a proof for the reader's convenience:

Let $p$ be a point in $Y$, let $V$ be the cone of lines in $Y$ through $p$, and let
$V_i$ be the cone of lines in $Y_i$ through $p$. 
We can compute the equations of $V_i$ as follows. 
We may assume that $p = (1:0:\dots:0)$ and write the equation of $Y_i$ in the form 
$$
x_0^{l-1} F^i_{1}+ \dots + x_0 F^i_{l-1} + F^i_l,
$$
where $F^i_{d}$ is a form of degree $d$ in $x_1, \dots, x_n$.
Consider the line $W$ through $p$ and another point, which we may take
to be $p' = (0:1:0:\dots:0).$ Substituting the parametrization
$(1:t:0:\dots:0)$ of $W$ into the equation of $Y_i$, we see that $ \{F_1^i(p')= \dots 
=F_{l}^i(p') = 0\}$ if and only $W$ lies in $Y_i$. 
Thus $\codim V_i \leq l$. Since 
$$ 
\codim V \leq \sum_{1 \leq i \leq r-n-1} \codim V_i = (r-n-1) l \leq r-1,
$$
the cone $V$ is at least 1-dimensional; that is, there is at least one line
through $p$ contained in $Y$. 

Any line in $Y$ 
intersects $X$ in $l$ points, so any point of $Y$ 
is contained in $S_l$. Since any point of $\P^r$ is contained in the 
intersection of $r-n-1$ 
independent hypersurfaces in the linear system spanned by 
$Y_1, \dots, Y_{r-n}$, there is an $l$-secant line of  $X$ passing through every point of $\P^r$.

In particular, in the case $l = n+1, r=n+2$, this argument shows that if $X^n\subset \PP^{n+2}$
is a complete intersection of two surfaces of degree $n+1$, 
then any projection of $X$ to $\PP^{n+1}$ has fibers of length $n+1$.
This consequence is greatly generalized by the argument of Lazarsfeld given in 
Proposition \ref{lazarsfeld argument} below.
\end{example}

\subsection*{A nondegenerate surface in $\PP^5$ with many trisecants.}
\begin{example}\label{enriques}

The Fano model of the classical Reye Congruence is a non-degenerate 
Enriques surface in $\P^5$ whose $3$-secant lines sweep out a $4$-dimensional subvariety in $\P^5$
(see Conte and Verra \cite{conte+verra} Propositions 3.10 and 3.14, and Cossec \cite{cossec}, Section 3.3.)
It can be described as follows.
Let $A=[f_{ij}]_{1 \leq i,j \leq 4}$ be a symmetric $4 \times 4$ matrix whose entries are general linear forms in $\P^5$, and let $S$ be the subvariety of 
$\P^5$ defined by all the $3 \times 3$ minors of $A$. Then $S$ is a smooth nondegenerate surface in $\P^5$. We show that 
the $3$-secant lines of $X$ sweep out the degree 4 hypersurface in $\P^5$ defined by the determinant of $A$.  
Since $A$ is general, for a general vector $V=[g_1,g_2,g_3,g_4]$ in the row space of $A$, the $g_i$ are independent linear forms and their 
intersection defines a line $l_V$ in $\P^5$. Then $l_V$ intersects $X$ in a scheme of degree $3$: without loss of generality, we can assume 
that $V$  forms the first row of $A$; since $A$ is symmetric, the first column of $A$ vanishes on $l_V$ too. So the intersection of $l_V$ and $X$ is 
the same as the intersection of $l_V$ and the $3 \times 3$ minor $[f_{ij}]_{2\leq i,j \leq 4}$ which is a scheme of degree 3.  If $p$ is a general point in 
$\{ \det A = 0 \}$, then  there is a general vector in  the row space of $A$ which vanishes on $p$, and so $p$ is contained in a 3-secant line of $X$. 
\end{example}

This is the only example we know of a non-degenerate smooth $n$-fold in $\PP^{2n+1}$ with $n\geq 2$ such that
some fibers of a general projection to $\PP^{n+1}$ have regularity $n+1$---that is, the fiber consists of
points contained in a line. Are there other such examples? 

\subsection*{Codimension 2}
If $X\subset \PP^{n+2}$, then every fiber of a projection to $\PP^{n+1}$ is contained in a line,
so the regularity of each fiber is equal to its degree. Thus one can check the
degrees of fibers by checking their regularity. The following result was shown us by Rob
Lazarsfeld; with his generous permission we include a proof along the lines he suggested.

\begin{proposition} [Lazarsfeld] \label{lazarsfeld argument}Suppose that $X \subset \P^{n+2}$ is 
arithmetically Cohen-Macaulay
of dimension $n$. If $X$ does not lie on any hypersurface
of degree $< n+1$,  then any projection of $X$ to $\P^{n+1}$ from a point off $X$
must have fibers of degree (and regularity) at least $n+1$; that is,  
the closure of the union of $n+1$-secant lines to $X$ fills $\PP^r$. 
\end{proposition}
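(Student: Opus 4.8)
The plan is to reduce the statement to a Castelnuovo-Mumford regularity computation for the hyperplane (or rather $\P^{n+1}$) section of $X$ through a general projection center, exploiting the arithmetic Cohen-Macaulay hypothesis to control the resolution. First I would set up the projection: let $p\in\P^{n+2}$ be a general point off $X$, let $\pi\colon X\to\P^{n+1}$ be projection from $p$, and observe that since $X\subset\P^{n+2}$, a fiber of $\pi$ is $X\cap L$ where $L$ is a line through $p$, so every fiber is automatically contained in a line and its regularity equals its degree. Thus it suffices to exhibit, for a general $p$, a line $L\ni p$ meeting $X$ in a scheme of degree $\geq n+1$; equivalently, that the union of the lines that are $(n+1)$-secant to $X$ sweeps out all of $\P^{n+2}$.

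The key step is to count, via a parameter/dimension argument, the lines meeting $X$ in degree $\geq n+1$. Consider the incidence variety of pairs $(L,x)$ with $x\in X\cap L$, or better, work on a single general line $L$ through a general point $q\in\P^{n+2}$: I want to show $\deg(X\cap L)\geq n+1$ whenever $L$ passes through a general point, which would be false in general but is forced here by the hypothesis that $X$ lies on no hypersurface of degree $\leq n$. The mechanism: if a general line met $X$ in $\leq n$ points, then projecting $X$ from a general $\P^{n-1}$ (or intersecting with a general $\P^2$) one produces, on a general plane curve section $C = X\cap \P^2$, that $C$ is a plane curve of degree $d=\deg X$ all of whose general secant behavior is constrained; one then leverages arithmetic Cohen-Macaulayness. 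The cleanest route is: since $X$ is ACM of codimension $2$, its general hyperplane section (iterated down to a finite set $\Gamma$ of $d=\deg X$ points in $\P^2$) is ACM, hence $\Gamma$ imposes independent conditions on forms of each degree up to its regularity, and $\operatorname{reg}\Gamma = \operatorname{reg} X$. The hypothesis that $X$ lies on no form of degree $\leq n$ means $h^0(\cI_X(n)) = 0$, which by ACM descends to $h^0(\cI_\Gamma(n)) = 0$, i.e. the $d$ points of $\Gamma$ fail to lie on a curve of degree $n$; since the space of degree-$n$ forms in $\P^2$ has dimension $\binom{n+2}{2}$, this forces $d \geq \binom{n+2}{2}+1$... but more to the point it forces $\operatorname{reg}\Gamma \geq n+1$, hence $\operatorname{reg} X \geq n+1$.

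Now I would connect regularity back to secant lines. Here is the crux: for a zero-dimensional fiber $Z = X\cap L \subset L \cong \P^1$, $\deg Z = \operatorname{reg} Z$, and I claim that for a general $p$ the fiber has $\deg Z \geq \operatorname{reg} X - (\text{drop from taking sections})$. Concretely: a general projection of $X$ from $p$ to $\P^{n+1}$ has image $\overline X$ of the same degree $d$; the double-point locus, and more precisely the structure of $\pi$, forces $\operatorname{reg}\overline X$ or the fibers to absorb the failure of $X$ to be cut out in low degree. The precise statement to extract is that if every fiber had degree $\leq n$, then the ideal of $X$ would be generated (as an ACM codimension-2 ideal, via the Hilbert-Burch matrix) by forms of degree $\leq n$, contradicting $h^0(\cI_X(n))=0$: indeed for an ACM codimension-$2$ subscheme the Hilbert-Burch resolution has the form $0\to \bigoplus\cO(-b_j)\to\bigoplus\cO(-a_i)\to\cI_X\to 0$ with $a_i < b_j$ (after reordering), and the maximal secant order of a general line equals $\max_j b_j - 1$ or similar; the condition $h^0(\cI_X(n))=0$ says $\min a_i \geq n+1$, whence $\max b_j \geq n+2$, delivering an $(n+1)$-secant line through a general point.

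The main obstacle I anticipate is making the last link fully rigorous: precisely relating the degrees of the syzygies (the $b_j$) of the Hilbert-Burch matrix to the secant order of a general line, and verifying that generality of $p$ guarantees the line $L$ through $p$ realizing this secant order passes through an arbitrary general point rather than only through special points. The natural fix is to argue on the universal line / use that the variety swept by the maximal secant lines is itself cut out by the determinant (or subdeterminants) of the Hilbert-Burch matrix, so it is a hypersurface of degree $\sum b_j - \sum a_i$; checking this degree is positive and that the hypersurface is in fact all of $\P^{n+2}$ (degree reasons, or that $X$ is not contained in it forces it to be everything) closes the argument. A secondary subtlety is ensuring the generic hyperplane sections remain ACM and that $\operatorname{reg}$ is preserved under them — this is standard for ACM schemes in characteristic zero and can be cited.
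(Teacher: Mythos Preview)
Your proposal has a genuine gap at exactly the point you yourself flag: the link between the Hilbert--Burch degrees $b_j$ and the secant order of a general line is asserted but never established, and there is no straightforward way to make it work. Restricting generators of $\cI_X$ to a line $L$ gives forms of degree $\geq n+1$ on $\P^1$, but their common vanishing locus can have arbitrarily small degree (indeed, a general line in $\P^{n+2}$ misses $X$ entirely). Your ``natural fix'' does not repair this: the Hilbert--Burch matrix is $(t-1)\times t$, so there is no determinant to take, and the maximal minors already cut out $X$ itself, not a secant locus. The regularity digression in the middle (showing $\operatorname{reg} X\geq n+1$) is correct but does not feed into the conclusion. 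You also weaken the statement by taking $p$ general; the claim is for \emph{every} point off $X$.

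The paper's argument shares your starting point---the ACM resolution $0\to F_1\to F_0\to\cI_X\to 0$ with all $d_i\geq n+1$---but extracts the geometry by a completely different mechanism. One blows up the projection center to get $B\cong\PP_{\PP^{n+1}}(\cO\oplus\cO(-1))\xrightarrow{\alpha}\PP^{n+1}$, and observes that a fiber of regularity $\geq n+1$ exists iff $\R^1\alpha_*\bigl(\beta^*\cI_X(n-1)\bigr)\neq 0$ (cohomology and base change). Pushing forward the pulled-back resolution and using the explicit formula for $\R^1\alpha_*$ of line bundles, one gets a map $\phi$ of locally free sheaves on $\PP^{n+1}$ whose target has rank exactly $n$ less than the source. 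Macaulay's Generalized Principal Ideal Theorem then says that if $\coker\phi$ were zero as a sheaf it would have to be zero on global sections after a suitable twist; a one-line $h^0$ count ($t$ versus $t-1$) rules this out. This is where the numerology $\sum e_i=\sum d_i$ and $d_i\geq n+1$ actually enter, and it is what replaces your missing link.
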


We note that Zak's famous theorem on linear normality (\cite{Zak}, Chapter 2) can
be phrased in a similar way: it says that if $X^n$ is a smooth subvariety of 
codimension $\leq 1+ \lceil \frac{n}{2}\rceil$
in $\PP^r$, not contained
in a hypersurface of degree $<2$, then the 
 the closure of the union of 2-secant lines to $X$  fills $\PP^r$.
Is there a nice statement about $k$-secant lines, $2<k< n+1$, that interpolates
between these two results?

According to Hartshorne's Conjecture (\cite{Hartshorne-Bulletin}, Introduction), smooth 
projective varieties of codimension 2 and dimension $>6$
are complete intersections, so Proposition \ref{lazarsfeld argument} may include
all codimension 2 varieties of dimension $>4$. For dimension 2,
we examined 48 examples
of surfaces in $\PP^4$ catalogued by Decker, Ein, Schreyer \cite{DES} and Popescu \cite{Popescu}
to see whether their trisecants fill $\PP^4$. These examples were produced
using code originally written in the program Macaulay Classic of Bayer and Stillman,
and translated into Singular by Oleksandr Motsak. We used the program Magma to
make the computations using algorithms based on the paper of Eisenbud-Harris \cite{eisenbud-harris} and
unpublished work of Eisenbud-Ulrich. Of the 48 examples, 45 lie on no quadrics. Of these 45,
there is just one whose trisecants do not fill $\PP^4$: the elliptic scroll of degree 5,
whose ideal is generated by 5 cubics.

We now turn to the proof of Proposition \ref{lazarsfeld argument}.
To compute things about a linear projection $\pi_\Sigma$ from a linear space $\Sigma\subset \PP^r$
to $\PP^{n+1}$, 
we resolve it by blowing up. The general setup is this:
Let $\Sigma$ be a plane of dimension $\lambda-1$, where 
$\lambda := r-n-1$. Let $\beta: B \to \PP^r$ be the blowup of $\Sigma$, and let
$E$ be the exceptional fiber. Then 
$$
B\cong \PP_{\PP^{n+1}}(\cE),
$$
 where
$$
\cE \cong \cO_{\PP^{n+1}} \oplus \cO_{\PP^{n+1}}(-1)^\lambda, \text{ and } \cO_{\PP_{\PP^{n+1}}(\cE)}(1) = \cO_B(E).
$$
With this notation the projection fits into the diagram
$$\begin{diagram}
 \PP_{\PP^{n+1}}(\cE)\cong B& \rTo^\beta& \PP^r \cr
\dTo^\alpha&\ldDashto_{\pi_\Sigma} \cr
\PP^{n+1} 
\end{diagram}
$$
where $\alpha: \PP_{\PP^{n+1}}(\cE) \to \PP^{n+1}$ is the structure map of the projective bundle. 
In these terms we can describe the functor $\alpha_*\beta^*$, and more generally the 
derived functors $\R^i\alpha_*\beta^* $, quite explicitly, at least for their
action on sums of line bundles. 

\begin{lemma}[\cite{Hartshorne} II, 17.11 and III, Ex. 8.4] \label{direct images} Let $\alpha,\beta$ and $\cE$ be as above.
\begin {enumerate}

\item \label{sections} There are canonical isomorphisms
$$
\alpha_*\beta^* (\cO_{\PP^r}(d)) \cong \cO_{\PP^{n+1}}(d) \otimes Sym_d(\cE) \cong \bigoplus_{j=0}^d \bigl( \cO_{\PP^{n+1}}(d-j) \otimes Sym_j(\O_{\PP^{n+1}}^\lambda)\bigr).
$$
These induce isomorphisms
$$
H^0(\alpha_*\beta^*( \cO_{\PP^r}(d))) \cong H^0(\cO_{\PP^{n+1}}(d) \otimes Sym_d(\cE)) \cong H^0(\cO_{\PP^r}(d)).
$$
In particular, $\alpha_*\beta^* (\cO_{\PP^r}(d)) = 0$ when $d<0$.

\item \label{Rtop} There are canonical isomorphisms
$$
\R^\lambda(\alpha_*)\beta^* (\cO_{\PP^r}(-d))
\cong \cO_{\PP^{n+1}}(-d) \otimes (Sym_{d-\lambda-1} (\cE))^*
\otimes \bigwedge^{\lambda+1}\cE^*.
$$
\end{enumerate}
\qed
\end{lemma}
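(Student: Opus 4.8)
The plan is to derive both statements from two classical facts about the blowup of a linear subspace and about projective bundles --- precisely the facts in the cited references (Hartshorne II.7, II.17.11 and III, Ex.~8.4). Everything reduces to (a) identifying the pullback $\beta^*\cO_{\PP^r}(1)$ in terms of the tautological bundle $\cO_B(E)=\cO_{\PP_{\PP^{n+1}}(\cE)}(1)$ and of $\alpha^*\cO_{\PP^{n+1}}(1)$, and (b) the computation of the higher direct images $\R^i\alpha_*$ of line bundles on $\PP(\cE)$.

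First I would record the basic relation
$$
\beta^*\cO_{\PP^r}(1)\;\cong\;\cO_B(E)\otimes\alpha^*\cO_{\PP^{n+1}}(1).
$$
This is checked by testing against the fibers of $\alpha$ (on which $E$ restricts to a hyperplane) and against strict transforms of hyperplanes containing $\Sigma$: a hyperplane through $\Sigma$ pulls back to $E$ plus its strict transform, and those strict transforms are exactly the $\alpha$-pullbacks of hyperplanes of $\PP^{n+1}$. Tensoring $d$ copies and applying the projection formula for $\alpha$ gives
$$
\R^i\alpha_*\beta^*\cO_{\PP^r}(d)\;\cong\;\bigl(\R^i\alpha_*\cO_{\PP(\cE)}(d)\bigr)\otimes\cO_{\PP^{n+1}}(d),
$$
so, using that $\R^i\alpha_*$ of a line bundle vanishes for $0<i<\lambda$, both parts reduce to computing $\alpha_*\cO_{\PP(\cE)}(d)$ and $\R^\lambda\alpha_*\cO_{\PP(\cE)}(-d)$. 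For part (1) I would use $\alpha_*\cO_{\PP(\cE)}(d)\cong \Sym_d(\cE)$ for $d\ge 0$ (and $0$ for $d<0$), substitute $\cE\cong\cO_{\PP^{n+1}}\oplus\cO_{\PP^{n+1}}(-1)^\lambda$, and expand the symmetric power of a direct sum to get $\Sym_d(\cE)\cong\bigoplus_{j=0}^d\cO_{\PP^{n+1}}(-j)\otimes \Sym_j(\cO_{\PP^{n+1}}^\lambda)$; twisting by $\cO(d)$ yields the displayed formula. For the $H^0$ statement I would avoid the binomial bookkeeping and argue directly: since $\beta$ is the blowup of a smooth center, $\beta_*\cO_B=\cO_{\PP^r}$, so the projection formula gives $H^0(B,\beta^*\cO_{\PP^r}(d))=H^0(\PP^r,\cO_{\PP^r}(d))$, while the Leray spectral sequence identifies the same group with $H^0(\PP^{n+1},\alpha_*\beta^*\cO_{\PP^r}(d))$; the vanishing for $d<0$ is immediate. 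For part (2) I would invoke relative Serre duality for the smooth projective bundle $\alpha$ of relative dimension $\lambda$, with $\omega_\alpha\cong\cO_{\PP(\cE)}(-\lambda-1)\otimes\alpha^*\bigwedge^{\lambda+1}\cE$, to get
$$
\R^\lambda\alpha_*\cO_{\PP(\cE)}(-d)\;\cong\;\bigl(\alpha_*(\cO_{\PP(\cE)}(d)\otimes\omega_\alpha)\bigr)^{*}\;\cong\;\bigl(\Sym_{d-\lambda-1}(\cE)\otimes \textstyle\bigwedge^{\lambda+1}\cE\bigr)^{*},
$$
and then the projection formula tensors in the remaining $\cO_{\PP^{n+1}}(-d)$; rewriting $(\bigwedge^{\lambda+1}\cE)^{*}=\bigwedge^{\lambda+1}\cE^{*}$ gives exactly the asserted expression.

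The only delicate point is normalization: one must fix conventions so that $\cO_B(E)$ really is the relative $\cO(1)$ attached to $\cE\cong\cO\oplus\cO(-1)^\lambda$ (rather than to a dual or twist), and so that the twist $\alpha^*\bigwedge^{\lambda+1}\cE$ in $\omega_\alpha$ appears with the right sign. This is pure bookkeeping rather than a genuine obstacle; since the statement is quoted verbatim from \cite{Hartshorne}, I would ultimately present the reduction above as the glue and cite II.7, II.17.11 and III, Ex.~8.4 for the two inputs.
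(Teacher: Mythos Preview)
Your proposal is correct, and in fact goes well beyond what the paper itself does: the paper offers no proof at all, merely placing a \qed after the citation to Hartshorne. Your argument is precisely the intended unpacking of those references --- identifying $\beta^*\cO_{\PP^r}(1)\cong \cO_B(E)\otimes\alpha^*\cO_{\PP^{n+1}}(1)$, applying the projection formula, and then reading off $\alpha_*\cO_{\PP(\cE)}(d)\cong\Sym_d\cE$ and the relative Serre duality statement for $\R^\lambda\alpha_*$ --- so there is nothing to compare.
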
 

\begin{proof}[Proof of Proposition \ref{lazarsfeld argument}]
We adopt the notation of Lemma \ref{direct images}, with $\lambda = 1$,
and we write $\pi: X\to \PP^{n+1}$ for the projection restricted to $X$.
 To show that the
regularity of some fiber is at least $n+1$ it suffices to show that for some point
$y\in \PP^{n+1}$ we have
$$
H^1(\cI_{\pi^{-1}(y)}(n-1)) \neq 0.
$$
Since $\Sigma\cap X = \emptyset$ we have $\pi^{-1}(y) = \alpha^{-1}(y) \cap \beta^{-1}(X)$
and $\cI_{\beta^{-1}X} = \beta^*\cI_X$. Thus
it suffices to show that $H^1(\beta^*(\cI_X(n-1))|_{\alpha^{-1}(y))}) \neq 0$ for some $y$.
By the Theorem on Cohomology and Base-change (see for example Hartshorne \cite{Hartshorne}, Theorem III. 12.11])
if suffices finally to show that 
$\R^1\alpha_*(\beta^*(\cI_X(n-1))) \neq 0$.

Because we have assumed that $X\subset \PP^{n+2}$ is arithmetically Cohen-Macaulay, $\cI_X$
has a resolution 
$$
0\to F_1 \to F_0\to \cI_X\to 0
$$
where $F_0$ and $F_1$ are sums of line bundles.
The map $\beta$ is locally an isomorphism on $X$, and outside
$X$ the sheaf $\cI_X$ is locally free, so we may pull the resolution back by $\beta$ (after tensoring with
$\cO_{\PP^{n+2}}(n-1)$) to get a short exact sequence of sheaves on $B$ of the form
$$
0\to \beta^*(F_1(n-1)) \to  \beta^*(F_0(n-1))\to  \beta^*(\cI_X(n-1)) \to 0.
$$
Using the fact that the fibers of $\alpha$ are 1-dimensional, we get from this a right exact sequence
$$
\R^1\alpha_*(\beta^*(F_1(n-1))) \rTo^\phi  \R^1\alpha_*(\beta^*(F_0(n-1))) \to  \R^1\alpha_*(\beta^*(\cI_X(n-1))) \to 0.
$$
and it suffices to show that the map labeled $\phi$ is not surjective.

We may write
$$
F_1 = \oplus_{i=1}^{t-1} \cO_{\PP^{n+2}}(-e_i), \qquad F_0 = \oplus_{i=1}^{t} \cO_{\PP^{n+2}}(-d_i)
$$
for some integers $t, d_i, e_i$, and considering first Chern classes on $\PP^{n+2}$
we see that $\sum_{i=1}^{t-1} e_i = \sum_{i=1}^{t} d_i$. 
Applying Lemma \ref{direct images} we see that
$$
\R^1\alpha_*(\beta^*(F_1(n-1))) = \bigoplus_{i=1}^{t-1}
 \cO_{\PP^{n+1}}(-e_i+n-1) \otimes Sym_{e_i-n-1}(\cE)^*\otimes \wedge^2(\cE)^*,
$$
and similarly
$$
\R^1\alpha_*(\beta^*(F_0(n-1))) = \bigoplus_{i=1}^{t}
 \cO_{\PP^{n+1}}(-d_i+n-1) \otimes Sym_{d_i-n-1}(\cE)^*\otimes \wedge^2(\cE)^*.
$$
Because we have assumed that $X$ does not lie on a hypersurface of degree $<n+1$,
all the integers $d_i$ are $\geq n+1$, so all the summands are nonzero.
Thus 
\def\rank{{\rm rank}}
$$
\rank\  \R^1\alpha_*(\beta^*(F_0(n-1))) = 
\sum_{i=1}^{t} d_i -tn  =  \rank\  \R^1\alpha_*(\beta^*(F_1(n-1))) - n.
$$

Set $p := \sum_i d_i - tn$. Taking modules of twisted global sections, we may represent
$\phi$ by a map from a graded free module of rank $p+n$ over the homogeneous coordinate ring of
$\PP^{n+1}$ to another such module, of rank $p$.
It follows by Macaulay's Generalized Principal Ideal Theorem (see for example \cite{Eisenbud} Exercise 10.9)
that if such a map is not surjective, then its cokernel has codimension at most $n+1$. Thus
to prove that the cokernel is nonzero as a sheaf on $\PP^{n+1}$, it suffices to show that
the map $\phi$ does not induce a surjection on twisted global sections.
This is immediate from the direct sum decompositions given above:  after twisting by $\cO_{\PP^{n+1}}(1)$ we get
$$
h^0\R^1\alpha_*(\beta^*(F_0(n-1))) \otimes \cO_{\PP^{n+1}}(1) = t
$$
and 
 $$
h^0\R^1\alpha_*(\beta^*(F_1(n-1))) \otimes \cO_{\PP^{n+1}}(1) = t-1.
$$
\end{proof}

\section{Symmetry and Decomposition of $Q$} 

The sheaf $Q(X,Y)$ is defined in terms that can be understood
as the comparison of the deformations of $X$ in the ambient space to 
the deformations of $X\cap Y$ in $Y$. It turns out that under favorable
circumstances $Q(X,Y)=Q(Y,X)$, and this module is determined up
to a free summand by the scheme $X\cap Y$ itself.

\begin{theorem}\label{Q and deformations}\label{di}
Suppose that $X,Y\subset \PP^r$ are smooth subvarieties with ideal
sheaves $\cI_X, \cI_Y$, and set $Z=X\cap Y$.
Let $F\subset N_{Y/\PP^r} = Hom(\cI_Y, \cO_Y)$ be the subsheaf consisting of those local sections
that map $\cI_X\cap\cI_Y$ into $\cI_{Z/Y}\subset \cO_Y$. 
\begin{enumerate}
\item
If $\eta\in H^0 (N_{Y/\PP^r})$ is a flat first-order deformation of $Y$, then $\eta$ induces 
a flat first-order deformation of $X\cap Y$ if and only if $\eta\in H^0 (F)$.
\item
There is a natural short exact
sequence 
$$
0\to F\to N_{Y/\PP^r} \to Q(X,Y)\to 0.
$$
\item $Q(X,Y)=Q(Y,X)$. \label{symmetry in smooth case}
\end{enumerate}
\end{theorem}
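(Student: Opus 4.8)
The three assertions will be handled in order, with (1) and (2) amounting to an unwinding of the definitions of $N_{Y/\PP^r}$ and $Q$, while (3) requires a second, manifestly symmetric, model for $Q$. For (1) the plan is a direct computation over the dual numbers. Writing $\eta\in H^0(N_{Y/\PP^r})=H^0(\Hom(\cI_Y,\cO_Y))$ for the deformation, the deformed ideal is $\cI_{Y_\epsilon}=\{\,f_0+\epsilon f_1\ :\ f_0\in\cI_Y,\ \overline{f_1}=-\eta(f_0)\text{ in }\cO_Y\,\}\subset\cO_{\PP^r}[\epsilon]/(\epsilon^2)$, and the induced deformation of $Z=X\cap Y$ is cut out by $\cI_X\cO_{\PP^r}[\epsilon]/(\epsilon^2)+\cI_{Y_\epsilon}$. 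Flatness of this family over $k[\epsilon]/(\epsilon^2)$ says exactly that every element $(a_0+f_0)+\epsilon(a_1+f_1)$ of that ideal (with $a_i\in\cI_X$, $f_0\in\cI_Y$, $\overline{f_1}=-\eta(f_0)$) whose constant term $a_0+f_0$ vanishes has $\epsilon$-coefficient in $\cI_X+\cI_Y$. Vanishing of the constant term forces $f_0=-a_0\in\cI_X\cap\cI_Y$, and the flatness condition then becomes $\overline{f_1}=-\eta(f_0)\in(\cI_X+\cI_Y)/\cI_Y=\cI_{Z/Y}$ for every $f_0\in\cI_X\cap\cI_Y$, that is, $\eta\in H^0(F)$. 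The only real work here is keeping the $\epsilon$-bookkeeping straight and pinning down precisely what ``induces a flat deformation'' means.

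For (2) I would exploit that $Y$ is smooth, so $\cI_Y/\cI_Y^2$ is locally free over $\cO_Y$ and $\Hom_{\cO_Y}(\cI_Y/\cI_Y^2,-)$ is exact; postcomposing with $\cO_Y\twoheadrightarrow\cO_Z$ then gives a surjection $N_{Y/\PP^r}\twoheadrightarrow\Hom(\cI_Y,\cO_Z)=N_{Y/\PP^r}|_Z$ with kernel $\Hom(\cI_Y,\cI_{Z/Y})$. Composing with the tautological surjection $N_{Y/\PP^r}|_Z\twoheadrightarrow Q(X,Y)$ built into the definition of $Q$ produces a surjection $N_{Y/\PP^r}\twoheadrightarrow Q(X,Y)$, and I would finish by identifying its kernel: a local section $\eta$ maps to $0$ in $Q(X,Y)$ iff its image in $N_{Y/\PP^r}|_Z$ lies in the image of $\Hom(\cI_{Z/X},\cO_Z)$, iff $\eta$, viewed as a map $\cI_Y\to\cO_Z$, kills the kernel $\cI_X\cap\cI_Y$ of $\cI_Y\twoheadrightarrow\cI_{Z/X}$, iff $\eta(\cI_X\cap\cI_Y)\subseteq\cI_{Z/Y}$ --- which is the defining condition of $F$, and visibly the same $F$ as in (1). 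This yields the short exact sequence.

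For (3) the plan is to produce a model of $Q(X,Y)$ symmetric in $X$ and $Y$. Since $Z=X\cap Y$ scheme-theoretically, the conormal sequences of $Z\subset X$ and $Z\subset Y$ together give a surjection $N^*_{X/\PP^r}|_Z\oplus N^*_{Y/\PP^r}|_Z\twoheadrightarrow N^*_{Z/\PP^r}$; applying $\Hom(-,\cO_Z)$ gives an \emph{injection} $\Phi\colon N_{Z/\PP^r}\hookrightarrow N_{X/\PP^r}|_Z\oplus N_{Y/\PP^r}|_Z$ with components $\rho_X,\rho_Y$, and I set $\cE:=\coker\Phi$, which is symmetric in $X$ and $Y$. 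Identifying $N_{Z/X}$ with $\ker\rho_X$ (the conormal sequence of $Z\subset X\subset\PP^r$, dualized) and observing, via the factorization $\cI_Y\hookrightarrow\cI_X+\cI_Y\twoheadrightarrow\cI_{Z/X}$, that the map defining $Q(X,Y)$ is $\rho_Y$ restricted to $\ker\rho_X$, a short diagram chase produces an exact sequence
$$
0\longrightarrow Q(X,Y)\longrightarrow\cE\longrightarrow\coker\rho_X\longrightarrow0
$$
together with its $X\leftrightarrow Y$ analogue. Thus the theorem reduces to $\coker\rho_X=0$, i.e. to surjectivity of $\rho_X\colon N_{Z/\PP^r}\to N_{X/\PP^r}|_Z$, and this is exactly where smoothness of $X$ enters: the conormal sequence $0\to\cI_X/\cI_X^2\to\Omega_{\PP^r}|_X\to\Omega_X\to0$ is locally split (as $\Omega_X$ is locally free), so after restricting to $Z$ one composes the conormal map $N^*_{Z/\PP^r}\to\Omega_{\PP^r}|_Z$ with a local projection onto $N^*_{X/\PP^r}|_Z$ to obtain a local splitting of the map dual to $\rho_X$; hence $\rho_X$ is locally, and therefore globally, surjective, giving $Q(X,Y)\cong\cE\cong Q(Y,X)$. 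I expect isolating $\cE$ and proving $\coker\rho_X=0$ to be the crux of the whole theorem; when $Z$ is finite one can alternatively obtain $\coker\rho_X=0$ by counting lengths, comparing the tangent spaces to $\Hilb_{\PP^r}$ and $\Hilb_X$ at $[Z]$ through the exact sequence of Eisenbud \cite{eisenbud1}, Ex.~16.8 already used in the proof of Theorem~\ref{deformation connection}.
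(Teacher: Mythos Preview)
Your proposal is correct and matches the paper's argument in substance: parts (1) and (2) are the same unwinding of definitions (you do (1) by explicit $\epsilon$-bookkeeping, the paper by asking for a map $\psi:\cI_Z\to\cO_Z$ making a diagram commute, but both reduce to the criterion $\eta(\cI_X\cap\cI_Y)\subset\cI_{Z/Y}$). For (3) the paper constructs mutually inverse maps $Q(X,Y)\leftrightarrow Q(Y,X)$ directly by lifting along the surjection $\Hom(\cI_Z,\cO_Z)\twoheadrightarrow\Hom(\cI_Y,\cO_Z)$ and then restricting to $\cI_X$, rather than passing through your symmetric object $\cE=\coker\Phi$, but the key step --- surjectivity of $\rho_X$ from the locally split conormal sequence of the smooth $X$ --- is identical in both.
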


\begin{proof} (1) We must show that $\eta\in H^0 (F)$ if and only
if there is a map $\psi: \cI_Z \to \cO_Z$ making the diagram
$$
\begin{diagram}[small]
\cI_Y&\rTo& \cI_Z &\lTo& \cI_X \cr
\dTo^\eta &&\dTo^\psi&&\dTo^0\cr
\cO_Y&\rTo& \cO_Z &\lTo&\cO_X \cr
\end{diagram} 
$$
commute. Writing $\overline \eta$ for the composition of $\eta$ with
the projection map $\cO_Y\to\cO_Z$, this commutativity is equivalent to the commutativity
of the diagram
$$
\begin{diagram}[small]
\cI_Y&\rTo& \cI_Z &\lTo& \cI_X \cr
&\rdTo_{\overline \eta} &\dTo^\psi&\ldTo_0\cr
&& \cO_Z && \cr
\end{diagram} 
$$
From the short exact sequence 
$$
0\to \cI_X\cap \cI_Y \to \cI_X\oplus \cI_Y \to \cI_Z\to 0
$$
we see that such a $\psi$ exists if and only if $\eta: \cI_Y\to \cO_Y$
induces the zero map $\cI_X\cap \cI_Y \to \cO_Y \to \cO_Z$, that is,
$\eta$ maps $\cI_X\cap \cI_Y$ into $\cI_{Z/Y}$, proving part (1).
\medskip

\noindent (2) It follows from the definition that 
$$
Q(X,Y)= \frac{\Hom(\cI_Y, \cO_Z)}{\{ f\mid  f(\cI_X\cap \cI_Y)=0\}},
$$
so the kernel of $N_{Y/\Pr} \to Q(X,Y)$ is equal to $F$.

 \noindent (3) Because $X$ is smooth, the derivation
$d: \cI_X \to \Omega_{\PP^r}$ induces a locally split injection
$d: \cI_X/\cI_X^2 \to \Omega_{\PP^r}|_X$.
It follows that the induced map of sheaves
$$
\Hom(\Omega_{\PP^r}, \cO_Z) \to \Hom(\cI_X, \cO_Z)
$$
is an epimorphism. Since $d: \cI_X \to \Omega_{\PP^r}$ factors
as the inclusion of $\cI_X\subset \cI_Z$ and the map
$d: \cI_Z \to \Omega_{\PP^r}$, we see that the restriction map
$$
 \Hom(\cI_Z, \cO_Z) \to \Hom(\cI_X, \cO_Z),
$$
is also an epimorphism. The same considerations hold for $X$
in place of $Y$.

Consider the commutative diagram
$$
\begin{diagram}[small]
&&g\in\Hom(\cI_Y, \cO_Z)&\rTo&Q(X,Y) = \frac{\Hom(\cI_Y, \cO_Z)}{\{ f\mid  f(\cI_X\cap \cI_Y)=0\}}
\cr
 &\ruTo&&\rdTo\cr
\tilde g\in  \Hom(\cI_Z, \cO_Z) &&&&\Hom(\cI_X\cap \cI_Y, \cO_Z)\cr
 &\rdTo&&\ruTo\cr
&&\overline g \in \Hom(\cI_X, \cO_Z)&\rTo&Q(Y,X) = \frac{\Hom(\cI_X, \cO_Z)}{\{ f\mid  f(\cI_X\cap \cI_Y)=0\}}
\end{diagram}
$$
where the diagonal maps are restriction homomorphisms. 
By the argument above, the two maps
coming from $\Hom(\cI_X, \cO_Z)$ are epimorphisms.
We define a map $\phi:  \Hom(\cI_Y, \cO_Z) \to Q(Y,X)$ as follows. 
Given a local section $g\in \Hom(\cI_Y, \cO_Z)$ then on a sufficiently small open set of $\PP^r$
we may lift $g$ back to a local section $\tilde g\in  \Hom(\cI_Z, \cO_Z)$.
Let $\overline g$ be the image  of $\tilde g$
in $\Hom(\cI_X, \cO_Z)$, and let $\phi(g)$ be the image of $\overline g$ in $Q(Y,X)$.

If $\tilde g'$ a different lifting, then $\tilde g-\tilde g'$
goes to zero in $\Hom(\cI_X\cap \cI_Y, \cO_Z)$, and thus $\phi(g)$ is well-defined.
It follows at once that $\phi$ is a homomorphism, and since the map
$\Hom(\cI_Z, \cO_Z)\to \Hom(\cI_X, \cO_Z)$ is an epimorphism, so is $\phi$.
Moreover, $\phi$ annihilates the maps $f$ such that $f(\cI_X\cap \cI_Y) =0$,
so $\phi$ induces an epimorphism $Q(X,Y)\to Q(Y,X)$. The inverse map is
constructed by a symmetrical procedure, proving part (3).

 \end{proof}

\begin{corollary}\label{h}
Suppose that $X,Y\subset \PP^r$ are smooth varieties that meet in 
a finite scheme $Z$ of degree $l$, and let $F$ be the kernel of the
surjection $N_Y\to Q(X,Y)$ defined above. 
Suppose that $S$ is a reduced algebraic subset of the Hilbert scheme
of subschemes of $\PP^r$ containing the point [Y] that corresponds to $Y$.
If the points of $S$ near $p$ correspond to
subschemes that meet $X$ 
in schemes of degree at least $l$, then the tangent space to $S$ at $p$ is
a subspace of $H^0 (F)$. In particular, the dimension of the 
tangent space to $S$ at $p$
 is at most $h^0(N_Y) - \dim_k Q(X,Y)$.
\end{corollary}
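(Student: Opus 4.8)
The plan is to identify the Zariski tangent space $T_{S,p}$ (where $p=[Y]$) with a space of first-order deformations of $Y$, and then to use Theorem \ref{Q and deformations} to show that all of these deformations lie in $H^0(F)$. Recall that the tangent space to the Hilbert scheme of $\PP^r$ at $[Y]$ is $H^0(N_{Y/\PP^r})$, that part (2) of Theorem \ref{Q and deformations} gives an exact sequence $0\to F\to N_{Y/\PP^r}\to Q(X,Y)\to 0$, so that $H^0(F)$ is the kernel of $H^0(N_{Y/\PP^r})\to H^0(Q(X,Y))$, and that by part (1) of that theorem a class $\eta\in H^0(N_{Y/\PP^r})$ lies in $H^0(F)$ exactly when the corresponding first-order deformation of $Y$ induces a flat first-order deformation of $Z=X\cap Y$.

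The central step is to prove $T_{S,p}\subseteq H^0(F)$. Let $\mathcal Y\to U$ be the restriction of the universal family to a (reduced) neighborhood $U$ of $p$ in $S$, and set $\mathcal W:=\mathcal Y\cap(X\times U)$, which is finite over $U$ with $\mathcal W_p=Z$. By upper semicontinuity of the length of the fibers of $\mathcal W\to U$ we have $\dim_k\cO_{\mathcal W_s}\le\dim_k\cO_{\mathcal W_p}=l$ for $s$ in a neighborhood of $p$, while the hypothesis on $S$ forces $\dim_k\cO_{\mathcal W_s}\ge l$ there; so $s\mapsto\dim_k\cO_{\mathcal W_s}$ is constant equal to $l$ near $p$. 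Since $U$ is reduced, the Theorem on Cohomology and Base-change shows that the pushforward of $\cO_{\mathcal W}$ to $U$ is locally free, so $\mathcal W\to U$ is flat in a neighborhood of $p$. Flatness is preserved under base change, so for any Zariski tangent vector $v\colon\Spec k[\epsilon]/(\epsilon^2)\to S$ at $p$, the pullback of $\mathcal W$ along $v$ is a flat first-order deformation of $Z$; and this deformation is precisely the one induced by the first-order deformation $\eta_v$ of $Y$ obtained by pulling $\mathcal Y$ back along $v$. Hence $\eta_v\in H^0(F)$ by Theorem \ref{Q and deformations}(1). Since $v\mapsto\eta_v$ is the inclusion $T_{S,p}\hookrightarrow H^0(N_{Y/\PP^r})$ induced by the inclusion of $S$ in the Hilbert scheme, this gives $T_{S,p}\subseteq H^0(F)$.

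For the numerical statement, apply $H^0$ to $0\to F\to N_{Y/\PP^r}\to Q(X,Y)\to 0$: since $Q(X,Y)$ is supported on the finite scheme $Z$ we have $H^0(Q(X,Y))=Q(X,Y)$, and the natural map $H^0(N_{Y/\PP^r})\to H^0(Q(X,Y))$ is onto (it factors through $H^0(N_{Y/\PP^r}|_Z)$, and $\Gamma$ is exact on sheaves supported on $Z$), so $\dim_k H^0(F)=h^0(N_{Y/\PP^r})-\dim_k Q(X,Y)$; combined with $T_{S,p}\subseteq H^0(F)$ this yields the asserted bound.

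The step I expect to require the most care is the passage from ``the points of $S$ near $p$'' to the single first-order deformation attached to a Zariski tangent vector at $p$: because $S$ is assumed only reduced, not smooth, at $p$, one cannot integrate $v$ to a curve inside $S$, and the argument must instead go through flatness of the finite family $\mathcal W\to U$ together with its behaviour under base change to $\cO_{S,p}/\mathfrak m_p^2$. It is precisely in the implication ``constant fiber length over a reduced base $\Rightarrow$ flat'' that the reducedness hypothesis on $S$ is genuinely used; the remaining points (semicontinuity of fiber length, compatibility of the intersection $\mathcal Y\cap(X\times U)$ with base change, and the identification of the resulting deformation of $Z$ with the one induced by $\eta_v$) are routine, as is the surjectivity of $H^0(N_{Y/\PP^r})\to H^0(Q(X,Y))$ needed for the last assertion.
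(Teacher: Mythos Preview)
Your argument for the main assertion $T_{S,p}\subseteq H^0(F)$ is correct and is essentially the paper's proof, written out in more detail: both proceed by restricting the universal family, using semicontinuity together with the hypothesis to make the fiber degree constant near $p$, invoking reducedness of $S$ to conclude flatness of the intersection family, and then applying part~(1) of Theorem~\ref{Q and deformations}.

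There is, however, a gap in your derivation of the numerical bound. You assert that $H^0(N_{Y/\PP^r})\to H^0(Q(X,Y))$ is surjective because it factors through $H^0(N_{Y/\PP^r}|_Z)$ and $\Gamma$ is exact on sheaves supported on $Z$. The second step is fine, but the first restriction map $H^0(N_{Y/\PP^r})\to H^0(N_{Y/\PP^r}|_Z)$ need not be surjective for an arbitrary smooth $Y$: this would require $N_{Y/\PP^r}$ to be globally generated along $Z$, which is not part of the hypotheses. Without surjectivity one only gets $h^0(F)\geq h^0(N_Y)-\dim_k Q(X,Y)$, and the inequality goes the wrong way for the stated bound. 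The paper's own proof is terse at exactly this point (``immediate from the exact sequence''), so the issue is not unique to your write-up; in all of the paper's applications $Y$ is a linear subspace, where $N_{Y/\PP^r}$ is a sum of $\cO(1)$'s and the surjectivity is clear. You should either add that hypothesis, supply a genuine argument for surjectivity, or weaken the conclusion to $\dim T_{S,p}\leq h^0(F)$.
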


\begin{proof}
Let $\pi: \mathcal Y\to S$ be the restriction of the universal
family. By semicontinuity, the degree of the intersection $\pi^{-1}(q)\cap X$, for
$q\in S$ near $p$, is equal to $l$. Since $S$ is reduced, this guarantees
that the family of intersections 
$$
\begin{diagram}[small]
\mathcal Y\cap (X\times S)&\ \ \subset\ \ &\PP^r\times S\cr
 \dTo &\ldTo\cr
 S
 \end{diagram}
$$
is flat over a neighborhood of $p$. The tangent space to $S$ at $p$ thus 
consists of first-order deformations of $Y$ that induce flat deformations of $X\cap Y$,
and the first statement follows from part (1) of Theorem \ref{Q and deformations}. 
The dimension statement is then immediate from
the exact sequence in part (2) of the Theorem.
\end{proof}

To put part (\ref{symmetry in smooth case}) of 
Theorem \ref{Q and deformations} into context, we note that, under somewhat more general
circumstances, the structure of $Q(X,Y)$ depends mostly on the intersection
$Z=X\cap Y$.

\begin{proposition}\label{independence}
Let $Z$ be a finite scheme over $k$. There is
a module $\overline Q(Z)$ depending only on $Z$ such that if
$X,Y\subset P$ are $k$-schemes of finite type with $X$ smooth and $Y$ locally a complete intersection
in $P$ such that $Z=X\cap Y$,  then $Q(X,Y) \cong \overline Q(Z) \oplus \cO_Z^m$ for some $m$.
\end{proposition}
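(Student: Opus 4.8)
The plan is to reduce to a purely local computation and to identify $Q(X,Y)$, up to free summands and a choice of $e := \codim Y$, with an invariant of $Z$ extracted from a minimal smooth embedding. Since $Q$ is coherent and supported on the finite scheme $Z$, I would first pass to the completed local rings at the points of $Z$; this loses nothing and makes Nakayama's lemma and cancellation of finitely generated modules over the Artinian ring $\cO_Z$ available. Working locally, write $\cI := \cI_{Z/X}$, so $\cO_Z = \cO_X/\cI$ with $\cO_X$ regular. Because $Y$ is a complete intersection, $\cI_{Y/P}/\cI_{Y/P}^2$ is $\cO_Y$-free of rank $e$, and the restriction map $\cI_{Y/P}/\cI_{Y/P}^2 \to \cI/\cI^2$ factors through a surjection $p: \cO_Z^e \twoheadrightarrow \cI/\cI^2$. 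Dualizing into $\cO_Z$ turns $p$ into a monomorphism, so the definition of $Q$ becomes
$$
Q(X,Y) \cong \coker\left(p^\vee:\ \Hom_{\cO_Z}(\cI/\cI^2,\cO_Z)\ \hookrightarrow\ \cO_Z^e\right).
$$
Next I would observe that this depends only on $e$ and on the isomorphism class of the $\cO_Z$-module $M := \cI/\cI^2$: any two surjections $\cO_Z^e\twoheadrightarrow M$ differ by an automorphism of $\cO_Z^e$ (lift one through the other and correct the lift to an automorphism by Nakayama, exactly as in the uniqueness of minimal free presentations), so the image of $p^\vee$, hence its cokernel, is well defined up to isomorphism.

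The heart of the argument is that $M=\cI_{Z/X}/\cI_{Z/X}^2$ is independent of the smooth ambient space up to a free summand. For any two closed immersions $Z\hookrightarrow X$ and $Z\hookrightarrow X'$ into smooth $k$-schemes of dimensions $n$ and $n'$, both two-term complexes $[\cI_{Z/X}/\cI_{Z/X}^2\to\Omega_{X/k}\otimes\cO_Z]$ and $[\cI_{Z/X'}/\cI_{Z/X'}^2\to\Omega_{X'/k}\otimes\cO_Z]$ are models of the naive cotangent complex of $\cO_Z$ over $k$, hence are homotopy equivalent; the mapping cone of a homotopy equivalence between them is a contractible three-term complex, and contractibility forces
$$
\cI_{Z/X}/\cI_{Z/X}^2 \oplus \cO_Z^{\,n'} \ \cong\ \cI_{Z/X'}/\cI_{Z/X'}^2 \oplus \cO_Z^{\,n}.
$$
(This may also be proved by a direct local computation of the two conormal modules, or by comparing $Z\hookrightarrow X$ with $Z\hookrightarrow X\times X'$ through the conormal sequence of the smooth morphism $X\times X'\to X$.) Cancelling free summands over $\cO_Z$ and taking $X'$ to be a minimal smooth embedding $X_0$, so that $n_0:=\dim X_0$ is the embedding dimension of $Z$ and $n_0\le n=\dim X$, this gives $M\cong \overline J(Z)\oplus\cO_Z^{\,n-n_0}$, where $\overline J(Z):=\cI_{Z/X_0}/\cI_{Z/X_0}^2$ depends only on $Z$.

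To assemble, substitute $M\cong\overline J(Z)\oplus\cO_Z^{\,n-n_0}$ and choose the surjection $\cO_Z^e\twoheadrightarrow M$ compatibly with this splitting (a minimal surjection onto $\overline J(Z)$, the identity on the free part, and zero on the remaining $e-\mu(M)$ coordinates). Dualizing, the map $p^\vee$ splits blockwise, and
$$
Q(X,Y)\ \cong\ \overline Q(Z)\ \oplus\ \cO_Z^{\,m},\qquad \overline Q(Z):=\coker\left(\Hom_{\cO_Z}(\overline J(Z),\cO_Z)\hookrightarrow\cO_Z^{\,\mu(\overline J(Z))}\right),\qquad m=e-\mu(M).
$$
Here $\mu(M)=\mu(\cI_{Z/X})\le e$ because $\cI_{Z/X}$ is generated by the $e$ images of a generating set of $\cI_{Y/P}$, so $m\ge 0$. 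When $Z$ is not connected the free rank can vary over the components of $\Spec\cO_Z$; one then absorbs the non-minimal free summands into $\overline Q(Z)$ and takes $m$ to be the smallest of these ranks, which is still intrinsic to $Z$ since the differences of the $\mu(\cI_{Z_i/X})$ are independent of the embedding.

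The main obstacle is the middle step: the embedding independence of $\cI_{Z/X}/\cI_{Z/X}^2$ up to a free summand. Everything else is bookkeeping, but here one must either invoke the well-definedness of the naive cotangent complex up to homotopy and then extract from it a statement about a single \emph{non-free} term of the complex (via the contractible-cone argument and Krull--Schmidt over the Artinian ring $\cO_Z$), or instead carry out the direct local comparison of two presentations; I expect this to be the only genuinely delicate point in the proof.
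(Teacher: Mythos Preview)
Your proof is correct and follows essentially the paper's plan: both reduce to the completed local case, identify $Q(X,Y)$ with the cokernel of the dual of a surjection $\cO_Z^{e}\twoheadrightarrow \cI_{Z/X}/\cI_{Z/X}^2$, show this cokernel is independent of the chosen surjection, and then prove that the conormal module itself depends only on $Z$ up to a free summand. The only difference is in this last step: the paper passes to a common power series ring dominating both smooth embeddings and computes directly that enlarging the ambient ring by $d$ variables gives $I''_Z/{I''_Z}^2 \cong I_Z/I_Z^2 \oplus \cO_Z^{d}$ (so no cancellation is needed), whereas you invoke homotopy invariance of the naive cotangent complex and then cancel free summands via Krull--Schmidt over the Artinian ring $\cO_Z$---a slightly more abstract route to the same lemma, which you yourself list alongside the direct comparison as an alternative.
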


\begin{proof} From the definition we see that $Q(X,Y)$ is the direct sum of local 
contributions, so we may harmlessly suppose that $Z$ has only one closed point.
Choose a minimal surjection
 $\phi: A:=k[[x_1,\dots,x_n]]\to \cO_Z$ so that $n$ is the dimension of the Zariski
 tangent space of $\cO_Z$. Let $I_Z$ be the kernel of this map, and let
 $f: F\to I_Z$ be a minimal surjection from a free $\cO_Z$-module. Set 
 $$
 \overline Q(Z, \phi, f) = \coker\biggl( \Hom_P(I_Z/I_Z^2, \cO_Z) \rTo^{\Hom(f, \cO_Z)} \Hom_P(F, \cO_Z)\biggr).
 $$
 We will show that $\overline Q(Z):=\overline Q(Z,\phi, f)$ is independent of the choices 
 of the minimal surjections $\phi$ and $f$. Note that $\overline Q(Z,\phi, f)$ has no free summand.
 
First, if $f': F'\to I_Z$ is any surjection from a free $\cO_Z$-module, then we may
write $F'\cong F\oplus G$ in such a way that $f'$ becomes the map $(f,0)$
so $\overline Q(Z,\phi,f') = \overline Q(Z,\phi,f)\oplus (G\otimes_P \cO_Z)$. In particular,
this shows that $\overline Q(Z,\phi,f)$ is independent of the choice of $f$ so long as $f$ 
is minimal.
 
Next, if $A'\to \cO_Z$ is a surjection from a different power series ring (of any dimension) then
we may choose a third power series ring $A''$ surjecting onto both $A'$ and $A$. It thus
suffices to show that, if $\psi: A''\to A$ is a surjection
of power series rings, then
$\overline Q(Z, \phi\psi, f)=  \overline Q(Z, \phi, f) \oplus \cO_Z^m$,
where $m$ is the difference $d$ of dimensions between $A$ and $A''$. 
Let $(y_1,\dots,y_d)$ be the kernel of $\psi$. Lifting generators of $I_Z$ back
to $A''$ as power series independent of the $y_i$, we see that the
kernel $I''_Z$  of the surjection $A''\to \cO_Z$ may be written as $(y_1,\dots, y_d)+I'_Z$, where
$y_1,\dots, y_d$ are a regular sequence modulo $I'_Z$. It follows that
$I''_Z/{I''_Z}^2=I'_Z/{I'_Z}^2 \oplus \cO_Z^d \cong I_Z/I_Z^2\oplus  \cO_Z^d$.
This shows that $\overline Q(Z,\phi,f)$ is independent of the choices of $\phi$ and $f$.

If $X,Y\subset P$ are schemes, with $X$ smooth, $Y$ a complete
intersection, and $Z=X\cap Y$, then $\cO_Z$ is
a homomorphic image of the completion $\hat {\cO}_{X,Z}$ of the local ring  of $X$ at the closed
point of $Z$, and $\hat {\cO}_{X,Z}\otimes_{\cO_P} I_Y/I_Y^2 \to (I_X+I_Y)/(I_X^2+I_Y)$
is a map of a free module onto $I_{Z/X}/I_{Z/X}^2$, so $Q(X,Y)$ is the direct sum of
$\overline Q(Z)$ and a free module, as required.
\end{proof}

\subsection{Decomposing $Q$}

The following result is sometimes useful in computing 
the length of $Q(X,Y)$. Since it reduces
at once to the affine case, we will work with ideals $L,I$ in a Noetherian
ring $A$. We define $Q(L,I)$ to be the cokernel of the map
$$ \Hom(\frac{I+L}{I^2+L}, \frac{A}{I+L}) \to \Hom(\frac{I}{I^2+IL}, \frac{A}{I+L}).
$$
To simplify the notation, if $K\subset A$ then we write
$Q_{A/K}(L,I)$ for $Q((L+K)/K, (I+K)/K)$ computed in the ring $A/K$.

\begin{proposition}\label{reduction1}
\begin{enumerate}
\item\label{transverse1} If $L'\subset L$ is an ideal such that $I\cap L'=IL'$, then
$
Q(L,I)\cong Q_{A/L'}(L,I).
$
In particular, if $I\cap L=IL$, then $Q(L,I)=0$.

\item \label{transverse2}If $I'\subset I$ then $Q_{A/I'}(L,I)\subset Q(L,I)$.

\item\label{transverse3} If $I=I'+I''$ with $I'\cap I''\subset I'I''$ 
and $I'\cap(I''+L)=I'(I''+L)$,
then
$
Q(L,I)= Q_{A/I'}(L,I)+Q_{A/I''}(L,I).
$
\end{enumerate}
\end{proposition}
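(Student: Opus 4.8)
The plan is to replace the cokernel definition of $Q(L,I)$ by a concrete description. Write $\cO_Z=A/(I+L)$. First I would verify, directly from the definition, that
$$
Q(L,I)\ \cong\ \frac{\Hom_A(I,\cO_Z)}{\{\,f\mid f(I\cap L)=0\,\}},
$$
equivalently that $Q(L,I)$ is the image of the restriction map $\Hom_A(I,\cO_Z)\to\Hom_A(I\cap L,\cO_Z)$. This is because $I+L$ annihilates every $\cO_Z$-valued homomorphism, so $\Hom_A(I/(I^2+IL),\cO_Z)=\Hom_A(I,\cO_Z)$ while $\Hom_A((I+L)/(I^2+L),\cO_Z)=\{g\in\Hom_A(I+L,\cO_Z)\mid g(L)=0\}$; such a $g$ restricts to a homomorphism $I\to\cO_Z$ killing $I\cap L$, and conversely any $f\in\Hom_A(I,\cO_Z)$ killing $I\cap L$ extends over $L$ via $g(x+y):=f(x)$. (This is the affine, non-smooth shadow of Theorem~\ref{di}(2).) I would also record that the coefficient ring $\cO_Z=A/(I+L)$ does not change when we pass to $A/L'$ with $L'\subset L$, or to $A/I'$ with $I'\subset I$.

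Granting this, parts (1) and (2) are bookkeeping with the various $\Hom(-,\cO_Z)$'s. For (2): inside $\bar A:=A/I'$ one has $\bar I=I/I'$, $\bar L=(L+I')/I'$ and, by the modular law, $\bar I\cap\bar L=\bigl((I\cap L)+I'\bigr)/I'$; applying the first paragraph inside $\bar A$ and tracing through the surjections $I\twoheadrightarrow\bar I$ and $I\cap L\twoheadrightarrow\bar I\cap\bar L$ identifies $Q_{A/I'}(L,I)$ with $\{\,f|_{I\cap L}\mid f\in\Hom_A(I,\cO_Z),\ f(I')=0\,\}$, plainly a submodule of $Q(L,I)$. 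For (1) I would run the same computation with $L'$ in place of $I'$: now the kernel of $I\cap L\twoheadrightarrow\bar I\cap\bar L$ is $(I\cap L)\cap L'=I\cap L'$, and the hypothesis $I\cap L'=IL'$ makes this equal to $IL'$; since $IL'\subset L'\subset I+L$, it is automatically killed by every $\cO_Z$-valued homomorphism, so nothing is lost in either $\Hom$ and $Q_{A/L'}(L,I)=Q(L,I)$. The ``in particular'' is the case $L'=L$, where $Q_{A/L}(L,I)=Q(0,(I+L)/L)$ is visibly $0$.

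For (3), the inclusion $Q_{A/I'}(L,I)+Q_{A/I''}(L,I)\subseteq Q(L,I)$ is immediate from (2). For the reverse inclusion, take $h\in\Hom_A(I,\cO_Z)$. Since $h(I'I'')\subset I'\cdot\cO_Z=0$, the hypothesis $I'\cap I''\subset I'I''$ gives $h(I'\cap I'')=0$, so the prescriptions $f|_{I'}=0$, $f|_{I''}=h|_{I''}$ determine a well-defined homomorphism $f\colon I=I'+I''\to\cO_Z$; setting $g:=h-f$ we get $g|_{I''}=0$, and $h|_{I\cap L}=f|_{I\cap L}+g|_{I\cap L}$ lies in $Q_{A/I'}(L,I)+Q_{A/I''}(L,I)$ by the description from (2). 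This proves the equality; note that it used only the first hypothesis. The second hypothesis guarantees in addition that the sum is \emph{direct}: writing $x\in(I'+I'')\cap L$ as $a+b$ with $a\in I'$, $b\in I''$ shows $a\in I'\cap(I''+L)=I'(I''+L)\subset I''+(I'\cap L)$, whence $I\cap L=(I'\cap L)+(I''\cap L)$, and then any homomorphism representing an element of $Q_{A/I'}(L,I)\cap Q_{A/I''}(L,I)$ vanishes on $I'\cap L$ and on $I''\cap L$, hence on $I\cap L$.

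I expect the only real friction to be organizational: matching up the several copies of $\Hom(-,\cO_Z)$ that appear after each quotient operation, and double-checking that the factored homomorphism $f$ in (3) and the extension $g$ in the first paragraph are genuinely well defined. Once the reformulation in the first paragraph is in place, each of the three parts is a two- or three-line argument.
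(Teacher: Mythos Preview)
Your argument is correct. The opening reformulation $Q(L,I)\cong\Hom(I,\cO_Z)/\{f\mid f(I\cap L)=0\}$, equivalently the image of restriction $\Hom(I,\cO_Z)\to\Hom(I\cap L,\cO_Z)$, is exactly what the paper uses (it is the affine analogue of Theorem~\ref{di}(2)), and your proof of part~(1) is essentially identical to the paper's. For parts~(2) and~(3) the paper instead sets up a three-row commutative diagram comparing the Hom-sequences over $A$ and over $A/I'$, and for~(3) deduces the decomposition from a direct-sum splitting $I/I^2=(I'+I^2)/I^2\oplus(I''+I^2)/I^2$ (and the analogous splitting modulo $L$); you bypass the diagram by working directly inside $\Hom(I\cap L,\cO_Z)$ and decomposing a given $h$ as $f+g$ by hand. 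Both routes are short once the reformulation is in place, but yours is a bit more transparent and yields an extra observation the paper does not isolate: the sum $Q(L,I)=Q_{A/I'}(L,I)+Q_{A/I''}(L,I)$ already follows from the single hypothesis $I'\cap I''\subset I'I''$, while the second hypothesis $I'\cap(I''+L)=I'(I''+L)$ is what forces the sum to be direct (the paper's splitting argument uses both hypotheses at once and obtains the direct sum in one step).
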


\begin{proof}
\noindent(1) It is clear that $A/J$ and $(I+L)/(I^2+L)$ remain
the same modulo $L'$. The kernel of the surjection
$I/I^2\to (I+L')/(I^2+L')$ is the image of $I\cap L'$. By hypothesis, 
$I\cap L'\subset IL'\subset IJ$. Since
$\Hom(I/I^2, A/J)=\Hom(I/IJ, A/J)$, we are done.

For parts (2) and (3) we refer to the diagram
$$\begin{diagram}[small]
0&\rTo&Hom(\frac{I'+I^2+L}{I^2+L}, A/J)&\rTo^\beta& Hom(\frac{I'+I^2}{I^2}, A/J) \cr
&&\uTo^\gamma&&\uTo\cr
0&\rTo&Hom(\frac{I+L}{I^2+L}, A/J)&\rTo& Hom(\frac{I}{I^2}, A/J)&\rTo&Q_A(L,I)&\rTo&0  \cr
&&\uTo&&\uTo&&\uTo\alpha\cr
0&\rTo&Hom(\frac{I+L}{I'+I^2+L}, A/J)&\rTo& Hom(\frac{I}{I'+I^2}, A/J)&\rTo&Q_{A/I'}(L,I)&\rTo&0  \cr
&&\uTo&&\uTo&&\uTo\cr
&&0&&0&&0\cr
\end{diagram}
$$
The first two rows, and the first two columns, are obviously exact.
For Part (2) we will show the injectivity of the map labelled $\alpha$.
We first identify
$(I+L)/(I^2+L)$ with $I/(I^2+(I\cap L))$, and
$(I+L)/(I'+ I^2+L)$ with $I/(I'+I^2+(I\cap L))$.
The injectivity of $\alpha$ is thus
equivalent to the statement that that any map $I/I^2\to A/J$ that annihilates both
the images of $I'$ and  of $L$ annihilates the image of $I'+L$, proving (2).

If the hypotheses of Part (3) are satisfied, then 
$$
\frac{I}{I^2} = 
\frac{I'+I^2}{I^2} \oplus \frac{I''+I^2}{I^2},
$$
 and similarly modulo $L$. Thus both the left hand
vertical sequences in the diagram are split exact, and we deduce
both that
$Q_A(L,I) = Q_{A/I'}(L,I) \oplus \coker \beta$,
and that $\coker \beta = Q_{A/I''}(L,I)$.
\end{proof}

\newcommand{\closer}{\vspace{-1.5ex}}

\bigskip
\vbox{\noindent Author Addresses:\par
\smallskip
\noindent{Roya Beheshti}\par
\noindent{Department of Mathematics, Washington University, St. Louis,
MO 63130}\par
\noindent{beheshti@math.wustl.edu}\par
\smallskip
\noindent{David Eisenbud}\par
\noindent{Department of Mathematics, University of California, Berkeley,
Berkeley CA 94720}\par
\noindent{eisenbud@math.berkeley.edu}\par
}


\begin{thebibliography}{99}

{\small

\bibitem{Bayer-Mumford} D.~Bayer and D.~Mumford,
What can be computed in algebraic geometry? 
In {\bf Computational algebraic geometry and commutative algebra (Cortona, 1991)}, 1--48,
Sympos. Math., XXXIV, Cambridge Univ. Press, Cambridge, 1993. 

\bibitem{BEL} A.~Bertram, L.~Ein, Lawrence and R.~Lazarsfeld, Vanishing theorems, a theorem of Severi, and the equations defining projective varieties.  J. Amer. Math. Soc.  4  (1991) 587--602.

\bibitem{boardman} J.~M.~Boardman,  Singularities of differentiable maps, Inst. Hautes ƒtudes Sci. Publ. Math. No. 33 (1967) 21--57.

\bibitem{conte+verra}
A. Conte, A. Verra, 
Reye Constructions for Nodal Enriques Surfaces.
Trans.~Am.~Math.~Soc. (1993) 79-100.

\bibitem{cossec} F.~Cossec, 
Reye Congruences, 
Trans. Am. Math. Soc. 280 (1983) 731--751.

\bibitem{Gor} D.~A.~Buchsbaum and D.~Eisenbud, Algebra structures for finite free resolutions, and some structure theorems for ideals of codimension $3$.  Amer. J. Math.  99  (1977) 447--485.

\bibitem{buchsbaum} D.~A.~Buchsbaum and D.~Eisenbud,  Some structure theorems for finite free resolutions,  Advances in Math.  12  (1974) 84--139.

\bibitem{buchweitz} R.~Buchweitz. These d'Etat.

\bibitem{hilb} D.~A.~Cartwright, D.~Erman, M.~Velasco and B.~Viray,
Hilbert schemes of 8 points in $A^d$. Preprint, arXiv:0803.0341 , 2008.

\bibitem{DES}W.~Decker, L.~Ein and F.-0.~Schreyer,
Construction of surfaces in $P\sb 4$. 
J. Algebraic Geom. 2 (1993) 185--237. 

\bibitem{Eisenbud}D.~Eisenbud, Commutative Algebra with a View Toward Algebraic Geometry.
Springer-Verlag, NY, GTM vol. 150, 1995.

\bibitem{EI}J.~Emsalem and A.~Iarrobino, Some zero-dimensional generic singularities: finite algebras
having small tangent space. Composito Math. 36 (1978) 145--188.

\bibitem{eisenbud1} D.~Eisenbud, Commutative Algebra with a View Toward Algebraic
Geometry, Springer-Verlag, New York, 1995.

\bibitem{eisenbud} D.~Eisenbud and S.~Goto,  Linear free resolutions and minimal multiplicity, J. Algebra  88  (1984) 89--133. 

\bibitem{eisenbud-harris} D.~Eisenbud and J.~Harris,
Powers of ideals and fibers of morphisms.
Math. Res. Letters, to appear.

\bibitem{Gaeta} F.~Gaeta, Quelques progr\`es r\'ecents dans la classification des vari\'et\'es alg\'ebriques d'un espace projectif. Deuxieme Colloque de G\'eom\'etrie Alg\'ebrique Li\`ege. C.B.R.M. 145Ð181 (1952).

\bibitem{GLP} L.~Gruson, R.~Lazarsfeld and C.~Peskine,  On a theorem of Castelnuovo, and the equations defining space curves, Invent. Math.  72  (1983) 491--506.

\bibitem{Hartshorne} R.~Hartshorne, Algebraic Geometry. Springer-Verlag, New York, 1977.

\bibitem{Hartshorne-Bulletin} R.~Hartshorne, Bull.~Amer.~Math.~Soc.~ 80 (1974) 1017--1032.
BULLETIN OF THE 
AMERICAN MATHEMATICAL SOCIETY Volume 80, Number 6, November 1974 

\bibitem{Iarrobino72} A.~Iarrobino, Reducibility of the family of 0-dimensional schemes on a variety. Inventiones
Math. 15 (1972) 72--77.

\bibitem{kwak} S.~Kwak, Generic projections, the equations defining projective varieties 
and Castelnuovo regularity, Math. Z. 234(2000) 413--43.

\bibitem{laz} R.~Lazarsfeld,  Positivity in algebraic geometry, 
Springer-Verlag, 2004.

\bibitem{laz1} R.~Lazarsfeld,  A sharp Castelnuovo bound for smooth surfaces, Duke Math. J.  55 
(1987) 423--429.

\bibitem{M2} D. Grayson and M. Stillman, Macaulay 2, a software system for research
                   in algebraic geometry,
                   Available at http://www.math.uiuc.edu/Macaulay2/
          
\bibitem{mat2} J.~N.~Mather,  Stability of $C\sp{\infty }$ mappings. VI: The nice dimensions.  Proceedings of Liverpool Singularities-Symposium, I (1969/70),  pp. 207--253. Lecture Notes in Math., Vol. 192, Springer, Berlin, 1971.

\bibitem{mat1} J.~N.~Mather,  Generic projections, Ann. of Math. (2)  98  (1973) 226--245.

\bibitem{PS} C.~Peskine and L.~Szpiro, Liaison des vari\'et\'es alg\'ebriques. I.  Invent. Math.  26  
(1974) 271--302.

\bibitem{Popescu}S.~Popescu,
Examples of smooth non-general type surfaces in $P\sp 4$. 
Proc. London Math. Soc. (3) 76 (1998) 257--275. 

\bibitem{ran} Z.~Ran, The (dimension $+2$)-secant lemma, Invent. Math.  106  (1991) 65--71.

\bibitem{roberts} J.~Roberts, Generic projections of algebraic varieties.  Amer. J. Math.  93 (1971) 191--214.

\bibitem{ulrich} B.~Ulrich, Liaison and Deformations. J. Pure and App. Algebra 39 (1986) 165--175.

\bibitem{vas} W.~Vasconcelos,  Ideals generated by $R$-sequences, 
J. Algebra (1967) 309--316.

\bibitem{Watanabe} J.~Watanabe, 
A note on Gorenstein rings of embedding codimension three.
Nagoya Math. J. 50 (1973) 227--232.

\bibitem{Zak} F.~Zak, 
Tangents and secants of algebraic varieties. 
Translations of Mathematical Monographs, 127. 
American Mathematical Society, Providence, RI, 1993.
}
\end{thebibliography}
\end{document}